\newcommand{\edit}[1]{\marginpar{\footnotesize{#1}}}
\newcommand{\rank}{\mathop{\mathrm{rank}}}
\newcommand{\disc}{\mathop{\mathrm{disc}}}
\newcommand{\bP}{\mathbb{P}}
\newcommand{\R}{\mathbb{R}}
\newcommand{\C}{\mathbb{C}}
\renewcommand{\H}{\mathbb{H}}
\newcommand{\Q}{\mathbb{Q}}
\newcommand{\Z}{\mathbb{Z}}
\newcommand{\GW}{\mathbb{G}W}
\newcommand{\XG}{X_G}   
\newcommand{\cA}{\mathcal{A}}
\newcommand{\cH}{\mathcal{H}}
\newcommand{\cO}{\mathcal{O}}
\newcommand{\pt}{{\mathrm{pt}}}
\newcommand{\Hom}{\operatorname{Hom}}
\newcommand{\an}{\mathrm{an}}
\newcommand{\et}{\mathrm{et}}
\newcommand{\onto}{{\mathrm{onto}}}
\renewcommand{\top}{{\mathrm{top}}}
\newcommand{\tors}{\mathrm{tors}}
\newcommand{\tH}{\widetilde{H}}
\numberwithin{equation}{section}
\theoremstyle{plain}
\newtheorem{theorem}[equation]{Theorem}
\newtheorem{corollary}[equation]{Corollary}
\newtheorem{proposition}[equation]{Proposition}
\newtheorem{lemma}[equation]{Lemma}
\newtheorem{substuff}{\bf Remark}[equation] 
\theoremstyle{definition}
\newtheorem{definition}[equation]{Definition}
\newtheorem{examples}[equation]{Examples}
\newtheorem{example}[equation]{Example}
\newtheorem{subex}[substuff]{Example}
\newtheorem{subexamples}[substuff]{Examples}
\newtheorem{defn}[equation]{Definition}
\theoremstyle{remark}
\newtheorem{remark}[equation]{Remark}
\newtheorem{porism}[equation]{Porism}
\newtheorem{subremark}[substuff]{Remark} 
\newtheorem*{notations}{Notation}
\def\smap#1{\ {\buildrel #1 \over \rightarrow}\ }
\def\lmap#1{\ {\buildrel #1 \over \leftarrow}\ }
\def\map#1{{\ \buildrel #1 \over \longrightarrow}\ }
\newcommand{\mathdot}{{\mathbf{\scriptscriptstyle\bullet}}}
\newcommand{\Spec}{\operatorname{Spec}}
\newcommand{\Pic}{\operatorname{Pic}}
\newcommand{\Br}{\operatorname{Br}}
\newcommand{\RX}{X\!\times\!\R} 
\newcommand{\Gm}{\mathbb{G}_{m}}
\newcommand{\VR}{GR^{[1]}}
\newcommand{\uu}[1]{GR^{[-1]}_{#1}}
\begin{document}
\title[Witt group of real surfaces]{The Witt group of real surfaces}
\date{\today}

\author{Max Karoubi}
\address{Universit\'e Denis Diderot Paris 7 \\
Institut Math\'ematique de Jussieu --- Paris Rive Gauche, FRANCE}
\email{max.karoubi@gmail.com}
\urladdr{http://webusers.imj-prg.fr/~max.karoubi}

\author{Charles Weibel}
\address{Math.\ Dept., Rutgers University, New Brunswick, NJ 08901, USA}
\email{weibel@math.rutgers.edu}\urladdr{http://math.rutgers.edu/~weibel}
\thanks{Weibel was supported by NSA and NSF grants}

\begin{abstract}
Let $V$ be an algebraic variety defined over $\R$, and 
$V_\top$ the space of its complex points. We compare the algebraic 
Witt group $W(V)$ of symmetric bilinear forms on vector bundles over $V$, 
with the topological Witt group $WR(V_{\top})$ of
symmetric forms on Real vector bundles over $V_\top$
in the sense of Atiyah, 
especially when $V$ is 2-dimensional. 
To do so, we develop topological tools to calculate $WR(V_\top)$, and 
to measure the difference between $W(V)$ and $WR(V_\top)$.
\end{abstract}
\maketitle

\pagestyle{myheadings} 
\setcounter{section}{0}%

If $V$ is a smooth algebraic surface defined over the real numbers $\R$,
the Witt group $W(V)$ is a finitely generated abelian group, whose
rank equals the number $\nu$ of components of the 2-manifold $V(\R)$
\cite{Mahe}. Sujatha has given formulas for the torsion subgroup
in \cite{Sujatha}; although the formulas she gives are not always easy 
to compute, they involve \'etale cohomology and are thus
mostly topological in nature. Indeed, Cox' theorem \cite{Cox} states that
$H^n_\et(V,\Z/2)\cong \H_G^n(V_\top,\Z/2)$.
Here $V_\top$ is the 4-manifold underlying the complex variety 
$V\times_{\R}\C$, $G$ is the cyclic group $G=\text{Gal}(\C/\R)$ of order~2, 
acting on $V_\top$ by complex conjugation, and $\H_G^*$ is Borel cohomology.

Together with Schlichting, we introduced the topological
Witt group $WR(X)$ of a $G$-space $X$ in \cite{KSW}
(see Definition \ref{def:WR} below),
and showed that the natural map $W(V)\to WR(V_\top)$ is always
an isomorphism modulo bounded 2-primary torsion; for curves it is an
isomorphism; for surfaces,
the kernel and cokernel of the map $W(V)\to WR(V_\top)$ 
have exponent~2 by \cite[Thm.\,8.7]{KSW}.  
%

One of the main goals of this paper is to develop topological tools 
to compute $WR(X)$ when $X$ has dimension $\le4$, since if $V$ is
an algebraic surface then $WR(V_\top)$ is very close to $W(V)$.
The second goal is to more precisely measure the difference
between these two invariants in various examples.

Here are three important invariants of $W$ that have
analogues for $WR$. One is the classical signature on $W(V)$,
which factors through the topological signature on $WR(V_\top)$,
which is an invariant defined on the 1--skeleton of $X$ for any $G$-space $X$
(see Lemma \ref{signature}).
Another is the algebraic discriminant; its analogue defined on $WR(X)$
takes values in the group $\Pic_G(X)$ of equivariant real line bundles;
see Definition \ref{def:PicG}. 
The third is the Hasse invariant 
(see Theorem \ref{thm:Hasse}.) 

For any smooth projective surface $V$\!, $WR(V_\top)$ is a 
birational invariant of $V$ (see Theorem \ref{birational}).
If $V$ is either a complex projective surface,
so that $V_\top\!=\!G\!\times\!V(\C)$,
or a surface defined over $\R$ with no real points, 
(so that $V_\top$ is connected) the main theorems of \cite{RGB}
(see \ref{pg=0} and \ref{WR:pg=0} below; cf.\,\cite{Zibrowius})
state that $W(V)\!\to\!WR(V_\top)$ is a split surjection,
and an isomorphism if and only if the geometric genus 
$p_g=\dim H^0_\an(X,\Omega^2_X)$ is $0$.
In particular, when $V$ is a rational surface with $G$ acting freely
then $W(V)\cong WR(V_\top)$.  To compute this group it suffices to
compute $WR(X)$ when $X=S^2\times S^2$. This is done 
in Theorem \ref{formsof P1xP1}; there are 3 cases, corresponding
to the 3 possible actions of $G$ on $H^2(X,\Z)$. 

When $G$ does not act freely, we determine the kernel and cokernel
of $W(V)\to WR(V_\top)$ in Theorem \ref{thm:W-WR}; 
the kernel is 0 if and only if $p_g(V)=0$. 
We also show in Theorem \ref{WR-X/G} that when
$H_G^3(X;\Z(1))$ has no 2-torsion and $X^G$ has 
$\nu>0$ connected components then
\[  WR(X) \cong \Z^\nu \oplus H^1(X/G,\Z/2).  \]

The tools used to compute $WR(X)$ are markedly different from the tools used
to compute $W(V)$, because $WR$ is related to Atiyah's Real $K$-theory and to
the equivariant $K$-group $KO_G$. 
Both $KR$ and $KO_G$ satisfy
Bredon's axioms \cite{Bredon} for a $G$-equivariant cohomology theory:
homotopy invariance, excision, and long exact sequences.
In the Appendix, 
we recall the facts we need about 
the Bredon spectral sequence converging to equivariant cohomology.

\smallskip
Our paper is organized as follows.
In Section \ref{sec:W}, we review Sujatha's results on the Witt group
of a surface, as a reference for our results.  
In Section \ref{sec:WR}, we define $WR(X)$ and review the construction 
of the signature map. In Section \ref{sec:lowdim}, we compute
$WR(X)$ when $\dim(X)\le2$.
We define the topological determinant in Section \ref{sec:Pic_G},
relating it to the group $\Pic_G(X)$ of symmetric forms on 
Real line bundles.

In Section \ref{sec:birational},
we show that $WR$ is a birational invariant of projective surfaces over $\R$.
In Section \ref{sec:SW}, we use the Stiefel--Whitney classes
$w_1$ and $w_2$ to define maps on $WR(X)$ compatible with the 
discriminant and Hasse invariant on $W(V)$.

In Section \ref{sec:C}, we recall and illustrate the 
calculations of \cite{RGB}
for $WR(X)$ when $V$ has no real points.
In Section \ref{sec:R-points} we study the map
$W(V)\to WR(X)$ when $V$ has real points. 

In Section \ref{sec:3-folds}, we say a few things about
3--folds.\ The group $WR(V_\top)$ is easy to describe for 
complex 3--folds, and not much harder for 3--folds over $\R$
which have no real points. 
We did not look very hard at 3-folds with real points, because of 
Parimala's result in \cite{Parimala},
showing that for a smooth algebraic 3-fold that
$W(V)$ is finitely generated if and
only if the mod-2 Chow group $CH^2(V)/2$ is finitely generated.
(We give the short proof in Theorem \ref{3folds-fg} below.)
Totaro has shown in \cite{Totaro} that $CH^2(V)/2$ is in
fact not finitely generated for a very general abelian 3-fold,
making computations of $W(V)$ for 3-folds problematic.

The Fundamental Theorem for Witt theory says that 
$W(V\times\Gm)\cong W(V)\oplus W(V)$ for a smooth variety $V$. 
This is true more generally when $K_{-1}(V)=0$ 
(which is always true for smooth $V$); see \cite{KSW1}.
In Section \ref{sec:Gm} of the present paper, we show that 
if $KR_{-1}(X)=0$ then $WR(X\times S^{1,1})\cong WR(X)\oplus WR(X)$
where $ S^{1,1}$ is the circle $(\Gm)_\top$. 

\begin{notations}
Recall that $G$ denotes $\Z/2$.
If $X$ is a space with involution, we write $\H_G^*(X,A)$
for the Borel cohomology of $X$ with coefficients in a $G$-module $A$.
Similarly, we write $H_G^*(X,h)$ for the Bredon cohomology of $X$, 
where $h$ is a coefficient system; see the Appendix.
We will occasionally write $H_\et(V)$ for $H_\et(V,\Z/2)$.
\end{notations}

\smallskip
\paragraph{\em Acknowledgements:}
The authors thank the referee, and are grateful to
Parimala, R.\ Sujatha and J.-L.\ Colliot-Th\'el\`ene 
for several discussions. We are also grateful to Marco Schlichting
for several discussions about Section \ref{sec:Gm}.

\smallskip\goodbreak
\section{A filtration on $W(V)$}\label{sec:W}

Let $k$ be a field containing $1/2$.
A  {\it symmetric form} $(E,\theta)$ on an algebraic variety $V$ over $k$
is an algebraic vector bundle $E$ on $V$, with an isomorphism $\theta$ from $E$
to its dual $E^*$ such that $\theta=\theta^*$. Asssociated to any bundle $E$,
there is a hyperbolic form $H(E)=(E\oplus E^*,\textrm{ev})$.
The Grothendieck-Witt group $GW(V)$ is the Grothendick group of the 
category of symmetric
forms on $V$, modulo the relation that $(E,\theta)=h(L)$ if $E$
has a Lagrangian $L$ (a subobject such that $L=L^\perp$).  
There is a hyperbolic map $H:K_0(V)\to GW(V)$, and
the Witt group $W(V)$ is defined to be the cokernel of $H$.
When $V$ is connected, there is a canonical surjection $W(V)\to\Z/2$,
and we define $I(V)$ to be its kernel.

\smallskip
When $\dim(V)\le3$ and $V$ is smooth, $W(V)$ injects into $W(F)$, 
where $F=k(V)$ is the function field of $V$ over $k$, and there is a
well known exact sequence
\[
0\to W(V) \to W(F) \to \oplus_x W(k(x)) 
\]
where $x$ runs over all points of codimension 1.
(See \cite{Pardon} and \cite{BalmerW}, for example.)
There is a natural filtration on $W(F)$ by the powers
$I(F)^n$ of the maximal ideal $I(F)$,
and $I(F)^n/I(F)^{n+1}\cong H_\et^2(F,\Z/2)$ by \cite{OVV}.

Still assuming that $\dim(V)\le3$,
we define $I_n=I_n(V)$ to be the ideal $W(V)\cap I(F)^n$ of $W(V)$;
$I(V)=I_1(V)$.
Parimala showed in \cite{Parimala} that above sequence restricts to
an exact sequence
\begin{equation}\label{I_n-seq}
0\to I_n(V) \to I^n(F) \to \oplus_x\ I^{n-1}(k(x)) 
\end{equation}
which maps to the Bloch-Ogus sequence \cite{BlochOgus}
\begin{equation}\label{BO-seq}
0\to H^0(V,\cH^n) \to H^n_\et(F,\Z/2) \to \oplus_x\ H^{n-1}_\et(k(x),\Z/2).
\end{equation}
Here $\cH^n$ is the Zariski sheaf associated to the
presheaf $U\!\mapsto\! H^n_\et(U,\Z/2)$.

\smallskip
Recall that the discriminant of a diagonal form $(a_1,...,a_n)$
over a field $F$ is the class of $(-1)^{[n/2]}\prod a_i$ in 
$F^\times/F^{\times2}\!.$
As the hyperbolic form $H(1)$ has discriminant $+1$,
this induces a function $\disc\!: W(F)\!\to\!F^\times\!/F^{\times2}\!.$

\begin{defn}\label{def:disc}
The map $I(V)\to I(F)\to I(F)/I^2(F)\cong\! F^\times/F^{\times2}$
factors through the subgroup $H^0(V,\cH^1)$ of 
$F^\times/F^{\times2}$. The {\it algebraic discriminant}
is the induced map $I(V)\to H^0(V,\cH^1)\cong H^1_\et(V,\Z/2)$.

Similarly, the {\it Hasse invariant} 
$I(V)\to H^0(V,\cH^2)\cong {_2}\!\Br(V)$
is the homomorphism induced by the morphism
$I(V)\to I(F)$ followed by the 
Stiefel--Whitney class $w_2:I(F)\to H^2_\et(F,\Z/2)$;
see \cite[3.1]{Milnor}.
\end{defn}

\begin{example}\label{disc(theta)}
The algebraic discriminant $I(V)\to H^1_\et(V,\Z/2)$ is onto. 
To see this, recall from Kummer Theory \cite[III.4]{Milne} that
there is a split exact sequence
\[
0\to\cO^\times(V)/2 \to H^1_\et(V,\Z/2)\to{_2}\!\Pic(V) \to0.
\]
If $a$ is a global unit of $V$ and $\cO_V$ is the trivial line bundle
then $(\cO_V,a)$ is a symmetric form, representing an element of $W(V)$
whose discriminant is the class of $a$ in $\cO^\times(V)/2$. 
If $L$ is a line bundle with
$L\otimes L\cong\cO_X$, the isomorphism $\theta:L\cong L^*$ defines a 
symmetric form, and the discriminant of $(L,\theta)$ maps to the 
class of $L$ in ${_2}\!\Pic(V)$.

If $\theta$ is any symmetric form on an algebraic
vector bundle $E$, the discriminant and Hasse invariant of 
$(E,\theta)$ 
may be calculated by passing to an open 
subvariety $U$ of $V$ where $E\cong\cO_U^n$ and $\theta$ is isomorphic to the
diagonal form $(a_1,...,a_n)$, where $a_i\in H^0(U,\cO^\times)$; 
the discriminant of $(E,\theta)$ is the class of the product 
$(-1)^{[n/2]}\prod a_i$ in 
$H^0(U,\cO^\times)/2\subset F^\times/F^{\times2}$, and the Hasse invariant
of $(E,\theta)$ is the class of $\prod_{i<j}\{a_i,a_j\}$ in $H^2_\et(F,\Z/2)$.
\end{example}

Let $\tH^1_\et(V,\Z/2)$ denote the cokernel of
$H^1_\et(k,\Z/2)\to H^1_\et(V,\Z/2)$.

\begin{lemma}\label{In/In+1}
If $\dim(V)\le3$ then 
$I_n(V)/I_{n+1}(V)\to H^0(V,\cH^n)$ is an injection for all $n\ge0$.
For $n=1$, this is the discriminant isomorphism
$$
I_1/I_2 \;\map{\simeq}\; H^0(V,\cH^1)\cong H^1_\et(V,\Z/2).
$$
If $-1$ is not a square in $k(V)^\times$ then
$W(V)/I_2(V) \cong \Z/4\oplus\tH^1_\et(V,\Z/2)$.
\end{lemma}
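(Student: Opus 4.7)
The plan is to exploit the commutative diagram relating the Parimala sequence \eqref{I_n-seq} to the Bloch--Ogus sequence \eqref{BO-seq} via the isomorphism $I^n(F)/I^{n+1}(F)\cong H^n_\et(F,\Z/2)$ cited above. For each $\omega\in I_n(V)$, its image in $H^n_\et(F,\Z/2)$ under the composite $I_n(V)\hookrightarrow I^n(F)\twoheadrightarrow I^n(F)/I^{n+1}(F)$ has kernel $I_n(V)\cap I^{n+1}(F)=I_{n+1}(V)$ by the very definition of the filtration, giving an injection $I_n/I_{n+1}\hookrightarrow H^n_\et(F,\Z/2)$. The residue of $\omega$ at each codimension-$1$ point $x$ lies in $I^{n-1}(k(x))$ and vanishes there by exactness of \eqref{I_n-seq}; hence its image in $H^{n-1}_\et(k(x),\Z/2)$ also vanishes, so \eqref{BO-seq} places the image of $\omega$ in the unramified subgroup $H^0(V,\cH^n)$, proving the first claim.

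For $n=1$, the resulting injection $I_1/I_2\hookrightarrow H^0(V,\cH^1)\cong H^1_\et(V,\Z/2)$ is by construction the algebraic discriminant of Definition \ref{def:disc}, which was shown to be surjective in Example \ref{disc(theta)} by exhibiting symmetric forms whose discriminants generate $\cO^\times(V)/2$ and ${_2}\!\Pic(V)$. Combined with injectivity, this yields the isomorphism $I_1/I_2\cong H^1_\et(V,\Z/2)$.

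For the last claim, feed this into the rank-mod-$2$ short exact sequence
\[ 0\to H^1_\et(V,\Z/2) \to W(V)/I_2(V) \to \Z/2 \to 0. \]
The unit form $\langle 1\rangle$ lifts the generator of $\Z/2$, and $2\langle 1\rangle=\langle 1,1\rangle\in I_1$ has discriminant $-1\in F^\times/F^{\times2}$. Since $-1\notin F^{\times2}$ by hypothesis, $V$ being normal implies $-1\notin \cO(V)^{\times 2}$, and the Kummer sequence from Example \ref{disc(theta)} shows that this class is nonzero in $\cO^\times(V)/2\subset H^1_\et(V,\Z/2)$. Therefore $\langle 1\rangle$ has order exactly $4$ in $W(V)/I_2(V)$. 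The cyclic subgroup of order $4$ it generates meets $H^1_\et(V,\Z/2)$ in the line spanned by $-1$, which is precisely the image of the pullback $H^1_\et(k,\Z/2)\to H^1_\et(V,\Z/2)$. Choosing any $\Z/2$-linear splitting $H^1_\et(V,\Z/2)\cong\langle -1\rangle\oplus A$ identifies $A$ with the cokernel $\tH^1_\et(V,\Z/2)$, and the assignment $(n,a)\mapsto n\langle 1\rangle+a$ then defines the desired isomorphism $\Z/4\oplus\tH^1_\et(V,\Z/2)\cong W(V)/I_2(V)$.

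The substantive content lies in the third paragraph. The first paragraph is a routine comparison of exact sequences, and the second quotes Example \ref{disc(theta)}. The hypothesis ``$-1\notin F^{\times2}$'' is essential precisely for pinning down the exact order of $\langle 1\rangle$ in $W(V)/I_2(V)$ and for identifying the image of $H^1_\et(k,\Z/2)$ with the $2$-torsion of that cyclic group; once this is done, the remainder is an elementary extension computation for a $\Z/2$-vector space extension of $\Z/2$ by a $\Z/2$-vector space.
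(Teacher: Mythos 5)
Your proof is correct and follows essentially the same route as the paper: injectivity and landing in the unramified subgroup come from the diagram chase between \eqref{I_n-seq} and \eqref{BO-seq} via $I^n(F)/I^{n+1}(F)\cong H^n_\et(F,\Z/2)$, surjectivity for $n=1$ is Example \ref{disc(theta)}, and the $\Z/4$ summand comes from $\langle1\rangle$ having order exactly $4$ because $\disc(\langle1,1\rangle)=-1$ is nontrivial, which is exactly the (more terse) argument in the paper. Only note that your identification of the line $\langle[-1]\rangle$ with the image of $H^1_\et(k,\Z/2)\to H^1_\et(V,\Z/2)$ tacitly uses $k^\times/k^{\times2}=\{\pm1\}$ (e.g.\ $k=\R$, the case in which the lemma is applied), a restriction that is equally implicit in the statement itself.
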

\goodbreak

\begin{proof}
This follows from a diagram chase on the map between sequences
\eqref{I_n-seq} and \eqref{BO-seq}, using the isomorphisms 
$I^n(F)/I^{n+1}(F)\cong H^n_\et(F,\Z/2)$ 
(which are known to hold for all $n$ by \cite{OVV}).
It is also well known that $W(F)/I(F)^2$ contains $\Z/4$
if and only if $\{-1,-1\}\ne0$ in $H_\et^2(k(V),\Z/2)$
(see, e.g., \cite[3.3]{Milnor}).
\end{proof}

\begin{subremark}\label{I_2/I_3}
The injection $I_2/I_3 \to H^0(V,\cH^2)\cong {_2}\!\Br(V)$
is called the {\it Hasse invariant} \cite{Parimala}; 
it is not known if it is an isomorphism for non-affine $V$ 
over $k$, even if $k=\R$.
\end{subremark}

\goodbreak
\medskip{\it Connection to signature}\medskip

Now suppose that $V$ is a variety over $\R$,
and $V(\R)$ has $\nu>0$ connected components.
The torsion subgroup of $W(V)$ is 2-primary (Pfister \cite{Pfister}), and 
Mah\'e \cite{Mahe} and Brumfiel \cite{Brumfiel84} proved that
the signature $W(V)\to\Z^\nu$ maps the torsionfree part of $W(V)$ 
isomorphically onto a subgroup of finite index in $\Z^\nu$.
By \cite[2.4]{KSW}, the signature 
factors through $WR(V_\top)$.

Recall that the cup product with $\{-1\}\in H^0(V,\cH^1)$ induces 
stabilization maps $H^0(V,\cH^n)\to H^0(V,\cH^{n+1})$;
Colliot-Th\'el\`ene and Parimala showed \cite{CTParimala} 
that this map is an isomorphism for $n>d=\dim(V)$ and that this stable
value is $(\Z/2)^\nu$, where $\nu$ is the number of real components of $V$. 
When $n\le d$, the composite
\[
H^0(V,\cH^n)\to H^0(V,\cH^{d+1}) \cong (\Z/2)^\nu
\]
is called the {\it stabilization map}.
Example \ref{delPezzo} shows that 
$H_\et^1(V,\Z/2)\cong H^0(V,\cH^1)$ need not map onto $(\Z/2)^\nu$. 
In contrast, van Hamel proved in  \cite[2.8]{vanHamel} that the 
stabilization map $H^0(V,\cH^d)\to H^0(V,\cH^{d+1})\cong (\Z/2)^\nu$
is a surjection. For surfaces, this becomes a surjection
${_2}\!\Br(V)\cong H^0(V,\cH^2) ~\map{\textrm{onto}}~ (\Z/2)^\nu$.

\begin{example}\label{delPezzo}
A nonsingular cubic surface in $\bP^3_\C$ is a special type of 
{\it Del Pezzo} surface, and is birationally equivalent to $\bP^2_\C$
(see \cite[V.4.7.1]{Hart}). There are nonsingular cubic surfaces defined 
over $\R$ which are not birationally equivalent to $\bP^2$ over $\R$;
they have $\Pic(V)\cong\Z^3$, and the real locus $V(\R)$ has 
$\nu=2$ components, homeomorphic to $S^2$ and $\R\bP^2$ respectively; 
see \cite[V.5.4]{Silhol}.
Since $H^1_\et(V,\Z/2)\cong \Z/2$ by Kummer theory, it follows that
$H^1_\et(V,\Z/2)\to(\Z/2)^\nu$ is not onto.
\end{example}

\goodbreak
\medskip{\it The torsion subgroup}\medskip

The torsion subgroup $I(V)_\tors$ of $I(V)$ has an induced filtration
by the ideals $I_n(V)_\tors = I_n(V) \cap I(V)_\tors$.

\begin{proposition}\label{exp.2^d}
If $V$ is a smooth $d$-dimensional variety over $\R$, then
\\
(i) $I_{d+1}(V)$ is torsionfree as an abelian group; $I_{d+1}(V)_\tors=0$.
\\(ii) the torsion subgroup $I(V)_\tors$ of $I(V)$ has exponent $2^{d}$.
\\(iii) If $V(\R)=\emptyset$ then $W(V)$ is an algebra over $\Z/2^{d+1}$.
\end{proposition}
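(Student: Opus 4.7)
The plan is to establish part~(i) first, then deduce (ii) and (iii) as formal consequences of the Witt-ideal filtration.

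For (i), I would use the Gersten-type exact sequence for Witt theory (the dimension-$d$ extension of \eqref{I_n-seq}, available from Pardon and Balmer--Walter), which gives an injection $I_{d+1}(V) \hookrightarrow I^{d+1}(F)$ where $F = \R(V)$. It therefore suffices to show $I^{d+1}(F)_\tors = 0$. Pfister's local--global principle identifies $W(F)_\tors$ with the kernel of the total signature, and any $\xi \in I^n(F)$ has signature divisible by $2^n$ at each ordering. The vanishing $W(F)_\tors \cap I^{d+1}(F) = 0$ then follows from the theorem of Arason--Elman--Jacob, using the Milnor conjecture proved in \cite{OVV} and the fact that $\mathrm{vcd}_2(F) \le d$: indeed $F(\sqrt{-1}) = \C(V)$ is the function field of a smooth complex $d$-fold and so has $2$-cohomological dimension $d$.

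For (ii), each successive quotient $I_n(V)/I_{n+1}(V)$ embeds into $H^0(V,\cH^n)$, which is $2$-torsion (Lemma~\ref{In/In+1} and its dimension-$d$ extension via Bloch--Ogus and \cite{OVV}). Hence for any $\xi \in I(V)_\tors$, an induction on $n$ shows $2^n\xi \in I_{n+1}(V)$; taking $n=d$ gives $2^d\xi \in I_{d+1}(V)_\tors = 0$ by~(i). For~(iii), assume $V(\R) = \emptyset$, so $\nu = 0$; Pfister--Mah\'e--Brumfiel then forces $W(V)$ to be entirely torsion, since its torsion-free quotient injects into $\Z^{\nu}=0$. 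The multiplicative unit $\langle 1\rangle$ satisfies $2\langle 1\rangle = \langle 1,1\rangle \in I(V)$, and (ii) applied to the torsion element $\langle 1,1\rangle$ gives $2^{d+1}\langle 1\rangle = 0$. Since multiplication by an integer $n$ on any $\alpha \in W(V)$ agrees with multiplication by $n\langle 1\rangle$, this forces $2^{d+1}$ to annihilate all of $W(V)$, i.e.\ $W(V)$ is a $\Z/2^{d+1}$-algebra.

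The main obstacle is part~(i), and inside it the vanishing of $I^{d+1}(F)_\tors$; this rests on the nontrivial inputs of the Milnor conjecture \cite{OVV} and the cohomological dimension computation for $\C(V)$. Everything after that is a routine accounting argument using the fact that consecutive quotients of the $I$-filtration are $2$-torsion.
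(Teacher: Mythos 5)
Your proposal is correct in substance and follows the same skeleton as the paper's proof: reduce (i) to the torsionfreeness of $I^{d+1}(F)$ for the function field $F=\R(V)$, deduce (ii) by pushing a torsion class down the $I$-filtration into $I_{d+1}(V)$, and obtain (iii) formally from (ii) together with the fact that the torsionfree part of $W(V)$ has rank $\nu=0$. The differences lie in the justifications. For (i), the paper simply quotes the classical result of Elman--Lam \cite[Thm.\,E]{ElmanLam} that $I^{d+1}(F)$ is a torsionfree group, whereas you re-derive that input from Pfister's local--global principle, the Arason--Elman--Jacob criterion and the Milnor conjecture \cite{OVV} via $\mathrm{cd}_2\,\C(V)\le d$; this is a valid, if heavier, route to the same field-theoretic fact. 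Note also that no Gersten-type sequence is needed to place $I_{d+1}(V)$ inside $I^{d+1}(F)$: by definition $I_{d+1}(V)=W(V)\cap I^{d+1}(F)$, so it is a subgroup of $I^{d+1}(F)$ outright. For (ii), your route through the injections $I_n(V)/I_{n+1}(V)\hookrightarrow H^0(V,\cH^n)$ is where you take on an unnecessary and, as stated, unsupported dependency: the sequence \eqref{I_n-seq} and Lemma \ref{In/In+1} are established (following Parimala) only for $\dim V\le3$, and their ``dimension-$d$ extension'' is a genuinely deeper statement than anything required here. The paper's argument is elementary: since $2=\langle1,1\rangle$ lies in $I(V)$, one has $2^{d}\,I(V)\subseteq I(V)^{d+1}\subseteq I_{d+1}(V)$, so $2^{d}\xi$ is a torsion element of $I_{d+1}(V)$ and vanishes by (i). Equivalently, the fact you actually use --- that each quotient $I_n(V)/I_{n+1}(V)$ is killed by $2$ --- already follows from $2\in I(F)$, with no appeal to Bloch--Ogus or \cite{OVV}. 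Your treatment of (iii) coincides with the paper's.
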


\begin{proof}
It is classical \cite[Thm.\,E]{ElmanLam} that $I^{d+1}(F)$ is a torsionfree
abelian group. Hence the subgroup $I_{d+1}(V)$ is also torsionfree.
Since the element '2' of $W(V)$ lies in $I(V)$ and 
$I^n(V)\subseteq I_n(V)$, $2^{d+1}\in 2I_d(V)\subseteq I_{d+1}(V)$.
Finally, (iii) is immediate from (ii) and the fact that the
torsionfree part has rank $\nu$.
\end{proof}

%
%
When $\dim(V)=2$, $I_3(V)$ is torsionfree and $I(V)_\tors$ has exponent~4.
\[
I(V)_\tors \supseteq I_2(V)_\tors \supseteq I_3(V)_\tors=0.
\]

Following Sujatha \cite{Sujatha},
let $H^0_\tors(V,\cH^n)$ denote the kernel of the stable map
$H^0(V,\cH^n)\to H^0(V,\cH^{d+1})\cong(\Z/2)^\nu$.
Sujatha described the torsion subgroup $I(V)_\tors$ as follows.

\begin{proposition}[Sujatha]\label{Itorsion}
When $V$ is a smooth surface over $\R$, 
there is a short exact sequence
\[
0\to H^0_\tors(V,\cH^2) \to I(V)_\tors \to H^0_\tors(V,\cH^1) \to 0.
\]
\end{proposition}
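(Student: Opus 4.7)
The plan is to exploit the fact, recorded in Proposition~\ref{exp.2^d}(i) for $d=2$, that $I_3(V)$ is torsionfree on a surface. This immediately converts the filtration $I(V)\supseteq I_2(V)\supseteq I_3(V)$ into a short exact sequence of torsion subgroups
\[
0\to I_2(V)_\tors \to I(V)_\tors \to I(V)_\tors/I_2(V)_\tors \to 0,
\]
and it remains to identify the outer terms with $H^0_\tors(V,\cH^2)$ and $H^0_\tors(V,\cH^1)$, respectively.

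To embed the outer terms into $H^0(V,\cH^\bullet)$, I would invoke Lemma~\ref{In/In+1}: the discriminant gives an isomorphism $I(V)/I_2(V)\cong H^0(V,\cH^1)$, so $I(V)_\tors/I_2(V)_\tors$ injects into $H^0(V,\cH^1)$, while the Hasse invariant yields an injection $I_2(V)_\tors \hookrightarrow I_2(V)/I_3(V)\hookrightarrow H^0(V,\cH^2)$ (using $I_3(V)_\tors=0$ once more). Next I would check that these images land inside the $H^0_\tors$ subgroups. The key observation is that on each real component, indexed by a point $p$, the composite $I_n(V)\to H^0(V,\cH^n) \to (\Z/2)^\nu$ coincides with $x\mapsto \sigma_p(x)/2^n \bmod 2$; this follows by naturality of the Milnor/OVV isomorphism $I^n/I^{n+1}\cong H^n_\et(-,\Z/2)$ applied to the inclusion $\Spec\R\hookrightarrow V$ at a real point. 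Torsion elements of $W(V)$ satisfy $\sigma_p=0$ (Mahé), so their image vanishes in $(\Z/2)^\nu$, confirming that we land in $H^0_\tors$.

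The main step is surjectivity of both induced maps. For $\alpha \in H^0_\tors(V,\cH^1)$, I would lift to $\tilde x \in I(V)$ via Lemma~\ref{In/In+1}; the vanishing of the stabilization of $\alpha$ then translates to $\sigma_p(\tilde x)\in 4\Z$ at each real component. The idea is to subtract a correction $y\in I_2(V)$ with matching signature $\sigma_p(y)=\sigma_p(\tilde x)$, so that $\tilde x - y$ is a torsion lift of $\alpha$. Existence of such $y$ rests on a standard refinement of Mahé's theorem, namely that $\sigma(I_n(V))$ contains $(2^n\Z)^\nu\cap\sigma(W(V))$. The analogous argument handles $H^0_\tors(V,\cH^2)$, starting from a lift in $I_2(V)$ and correcting by an element of $I_3(V)$.

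The hard part will be producing a lift in $I_2(V)$ of a given $\beta\in H^0_\tors(V,\cH^2)$: Remark~\ref{I_2/I_3} warns that $I_2(V)/I_3(V)\to H^0(V,\cH^2)$ is not known to be surjective in general. To sidestep this, I would use van Hamel's surjection $H^0(V,\cH^2)\twoheadrightarrow (\Z/2)^\nu$ in combination with a diagram chase across Parimala's exact sequence \eqref{I_n-seq} mapping to the Bloch--Ogus sequence \eqref{BO-seq}; the point to establish is that the restricted Hasse map $I_2(V)_\tors \to H^0_\tors(V,\cH^2)$ is onto, even though the unrestricted map may not be. This is essentially the technical heart of Sujatha's argument.
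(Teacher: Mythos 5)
Your reduction of the statement to two surjectivity claims is sound as far as it goes: $I_3(V)_\tors=0$ (Proposition \ref{exp.2^d} with $d=2$) does turn the filtration into the displayed extension, injectivity of the two outer maps follows from Lemma \ref{In/In+1} exactly as you say, and the observation that torsion classes land in the $H^0_\tors$ subgroups is correct. But this is only the formal shell. The actual content of the proposition is the surjectivity of $I(V)_\tors\to H^0_\tors(V,\cH^1)$ and of $I_2(V)_\tors\to H^0_\tors(V,\cH^2)$, and your proposal does not establish either. These two surjectivities are precisely the results \cite[2.1, 2.2]{Sujatha} that the paper's proof cites; in other words, you have rederived the easy bookkeeping around Sujatha's theorem while leaving its substance unproved. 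For the $\cH^2$ half you say so yourself ("the technical heart of Sujatha's argument"), and the proposed diagram chase through \eqref{I_n-seq}, \eqref{BO-seq} and van Hamel's surjection is only a plan, not an argument; note that Remark \ref{I_2/I_3} records that even the unrestricted map $I_2/I_3\to H^0(V,\cH^2)$ is not known to be onto, so some genuinely new input (Sujatha's) is required here.

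The $\cH^1$ half has a concrete flaw as well: the "standard refinement of Mah\'e's theorem" you invoke, $\sigma(I_n(V))\supseteq(2^n\Z)^\nu\cap\sigma(W(V))$, is not a standard result, and for $n=2$ it is essentially equivalent to the surjectivity you are trying to prove. The only refinement available in this paper is Colliot-Th\'el\`ene--Sansuc's $\sigma(I_3(V))\supseteq 8\Z^\nu$ \cite[4.1]{CTSansuc}. From that and the obvious elements of $I_2(V)$ (multiples of $\langle1,1,1,1\rangle$, i.e.\ $4\cdot W(V)$) one only realizes signature vectors $s\in4\Z^\nu$ whose coordinates $s_i/4$ are all congruent modulo $2$, because every class in $W(V)$ has constant rank parity across the components of $V(\R)$. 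A lift $\tilde x\in I(V)$ of a class in $H^0_\tors(V,\cH^1)$ can perfectly well have $\sigma(\tilde x)\in4\Z^\nu$ with the $s_i/4$ of mixed parity, and then the correcting element $y\in I_2(V)$ with $\sigma(y)=\sigma(\tilde x)$ is not produced by any of the tools you cite; producing it is exactly where Sujatha's argument (constructing suitable torsion forms from $2$-torsion line bundles and units) does real work. So the proposal, as written, has a genuine gap at both surjectivity steps; the correct short route is the paper's: quote \cite[2.1, 2.2]{Sujatha} together with $I_3(V)_\tors=0$.
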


\begin{proof}
Sujatha proves in \cite[2.1, 2.2]{Sujatha} that the maps
\[
I(V)_\tors/I_2(V)_\tors\!\to\! H^0_\tors(V,\cH^1), \kern7pt 
I_2(V)_\tors/I_3(V)_\tors\!\to\! H^0_\tors(V,\cH^2)
\]
are isomorphisms, and $I_3(V)_\tors=0$ by Proposition \ref{exp.2^d}.
\end{proof}

\begin{subremark}\label{def:j,k}
Set $j=\dim H^0_\tors(V,\cH^1)$
and $k=\dim H^0_\tors(V,\cH^2)$.
It follows that (when $\nu>0$)
the torsion subgroup $W(V)_\tors$ of $W(V)$ is a group of
exponent~4 and order $2^{j+k}$. (See \cite[Lemma 3.2]{Sujatha}.)
Note that $k+\nu=\dim {_2}\!\Br(V)$ 
by van Hamel's result \cite[2.9]{vanHamel}.

Sujatha \cite[3.1]{Sujatha} also defines a group $N$ 
fitting into an exact sequence
\[
{_2}\Pic(V) \to {_2}\Pic(V_\C) \to N \to \Pic(V)/2 \to \Pic(V_\C)/2.
\]
Her invariant $l$ is the dimension of $N$; she proves in
\cite[3.3]{Sujatha} that there are $k+l$ summands in $W(V)_\tors$,
and $j-l$ summands $\Z/4$ in $W(V)$.  It follows by a counting argument
that there are $k+2l-j$ summands $\Z/2$ in $W(V)$.
However, the numbers $j,k$ and $l$ are not always easy to determine; 
see Krasnov \cite{Krasnov-Br},  \cite{Krasnov02}.
\end{subremark}

Recall that a variety $V$ over $\R$ is {\it geometrically connected}
if $V\times_{\R}\C$ is connected, i.e., if the function field of $V$
does not contain $\C$.

\begin{corollary}\label{W/I2}
Let $V\!$ be a geometrically connected surface over $\R$
with no real points. 
Then we have a short exact sequence
\[
0 \to {}_2\!\Br(V)\to W(V) \to \Z/4\times\tH^1_\et(V,\Z/2) \to 0,
\]
$\tH^1_\et(V,\Z/2)$ denoting the cokernel of
$H^1_\et(\Spec\,\R,\Z/2)\!\to\! H^1_\et(V,\Z/2)$.
\end{corollary}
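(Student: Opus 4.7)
The plan is to stitch together Lemma \ref{In/In+1} with Sujatha's Proposition \ref{Itorsion}, using the hypothesis $V(\R)=\emptyset$ (so $\nu=0$) to force the extreme pieces of the filtration $W(V)\supseteq I(V)\supseteq I_2(V)\supseteq I_3(V)$ to take a particularly clean form.

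First I would identify the ideal $I_2(V)$ with ${}_2\!\Br(V)$. Because $V(\R)=\emptyset$, the torsionfree part of $W(V)$ has rank $\nu=0$ by Mah\'e's theorem, so $W(V)=W(V)_\tors$, and in particular $I_3(V)$ consists entirely of torsion elements. But Proposition \ref{exp.2^d}(i) says $I_3(V)$ is torsionfree for a smooth surface, so $I_3(V)=0$. Combined with Sujatha's isomorphism $I_2(V)_\tors/I_3(V)_\tors\xrightarrow{\sim} H^0_\tors(V,\cH^2)$ (used in the proof of Proposition \ref{Itorsion}), and the fact that $H^0_\tors(V,\cH^2)=H^0(V,\cH^2)\cong {}_2\!\Br(V)$ since the stabilization target $(\Z/2)^\nu$ vanishes, I obtain $I_2(V)\cong {}_2\!\Br(V)$.

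Next I would apply Lemma \ref{In/In+1} to describe the quotient $W(V)/I_2(V)$. The hypothesis that $V$ is geometrically connected means $V\times_\R\C$ is connected, so $\C$ is not a subfield of $k(V)$; equivalently, $-1$ is not a square in $k(V)$. The second clause of Lemma \ref{In/In+1} then gives
\[
W(V)/I_2(V)\cong \Z/4\oplus\tH^1_\et(V,\Z/2).
\]

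Finally I would assemble the two pieces into the short exact sequence
\[
0\to I_2(V)\to W(V)\to W(V)/I_2(V)\to 0,
\]
which, after substituting the two identifications above, is exactly the claimed extension. No step should pose a real obstacle: the only subtlety is being careful that both the ``$H^0_\tors=H^0$'' and ``$I_3=0$'' simplifications really require $\nu=0$, and that the hypothesis of geometric connectedness is precisely what is needed to invoke the $\Z/4$ summand in Lemma \ref{In/In+1}.
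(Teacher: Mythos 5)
Your proposal is correct and follows the same route as the paper: the paper's proof likewise deduces $I_2(V)\cong {}_2\!\Br(V)$ from Proposition \ref{Itorsion} using $\nu=0$ (your expansion via $I_3(V)=0$ and $H^0_\tors=H^0$ is exactly what that deduction amounts to) and then concludes with the second clause of Lemma \ref{In/In+1}. The only difference is that you spell out the intermediate steps that the paper leaves implicit.
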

\goodbreak

\begin{proof}
Since $\nu=0$, Proposition \ref{Itorsion} implies that
$I_2(V)\cong {_2}\!\Br(V)$. We conclude by Lemma \ref{In/In+1}.
\end{proof}

The group extension in Corollary \ref{W/I2} need not split,
as illustrated by Example \ref{Motzkin}. 

\begin{subremark}
Corollary \ref{W/I2} recovers Sujatha's result \cite[3.4]{Sujatha} that 
if $V$ is a geometrically connected surface with $V(\R)\!=\!\emptyset$ then
$W(V)$ has order $2^{j+k+1}$, because
$j=\dim H^1_\et(V,\Z/2)$ and $k = \dim {_2}\!\Br(V)$. 
\end{subremark}

\medskip
Let $Q_1$ denote the projective Brauer-Severi curve $x^2+y^2+z^2=0$.

\begin{lemma}\label{exp.8}
Suppose that $V$ is a smooth geometrically connected surface
with no real points. Then $W(V)$ is a $\Z/8$-algebra 
and surjects onto $\Z/4$. It is a $\Z/4$-algebra if and only if
there is a morphism $V\to Q_1$ defined over $\R$.
\end{lemma}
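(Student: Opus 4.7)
The first two assertions are quick: $W(V)$ is a $\Z/8$-algebra by the $d=2$ case of Proposition \ref{exp.2^d}(iii) (applicable since $V(\R)=\emptyset$), and the surjection onto $\Z/4$ is obtained by projecting the surjection $W(V)\twoheadrightarrow \Z/4\oplus \tH^1_\et(V,\Z/2)$ of Corollary \ref{W/I2} onto the first summand.

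For the iff statement, my plan is to translate the condition ``$W(V)$ is a $\Z/4$-algebra'' into a function-field statement. It says precisely that $4=\langle 1,1,1,1\rangle$ vanishes in $W(V)$. Since $V$ is a smooth surface, $W(V)\hookrightarrow W(F)$ with $F=k(V)$, so this is equivalent to $\langle 1,1,1,1\rangle=0$ in $W(F)$. But $\langle 1,1,1,1\rangle=\langle\langle -1,-1\rangle\rangle$ is the $2$-fold Pfister form associated to the quaternion algebra $(-1,-1)_F$, so it is hyperbolic iff $(-1,-1)_F$ splits, iff there exist $a,b\in F$ with $a^2+b^2=-1$. The ``if'' direction is then immediate: a morphism $f\colon V\to Q_1$ over $\R$ gives a ring homomorphism $f^*\colon W(Q_1)\to W(V)$, and Proposition \ref{exp.2^d}(iii) with $d=1$ applied to the curve $Q_1$ (which also has no real points) gives $4=0$ in $W(Q_1)$; pulling back yields $4=0$ in $W(V)$.

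For the ``only if'' direction, given $a,b\in F$ with $a^2+b^2=-1$, the triple $(a:b:1)$ defines an $F$-point of $Q_1$ and hence a rational map $V\dashrightarrow Q_1$ over $\R$. The main technical step, and in my estimation the main obstacle, is to extend this rational map to an honest morphism $V\to Q_1$. For smooth projective $V$ this is the classical fact that every rational map from a smooth projective surface to a smooth projective curve is already a morphism; the hypothesis $V(\R)=\emptyset$ ensures that every indeterminacy point has residue field $\C$, which is what makes the local elimination of indeterminacy compatible with the (real-point-free) target $Q_1$.
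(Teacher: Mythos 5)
Your reduction of the third assertion to the vanishing of the symbol $\{-1,-1\}$ in $H^2_\et(F,\Z/2)$ --- equivalently to the splitting of the quaternion algebra $(-1,-1)_F$, i.e.\ to $Q_1$ having an $F$-point --- is exactly the route the paper takes, and the first two assertions are handled correctly (the paper gets the surjection onto $\Z/4$ from the discriminant of the element $2$ rather than from Corollary \ref{W/I2}, but the content is the same). Your ``if'' direction, pulling back $4=0$ along $f^*:W(Q_1)\to W(V)$ using Proposition \ref{exp.2^d}(iii) for the curve $Q_1$, is clean and correct. The gap is in the ``only if'' direction. The ``classical fact'' you invoke --- that every rational map from a smooth projective surface to a smooth projective curve is a morphism --- is false: the projection $\bP^2\dashrightarrow\bP^1$ from a point is the standard counterexample. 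The correct classical statement requires the target to contain no rational curves (a curve of genus $\ge1$, an abelian variety, \dots), and $Q_1$, being a form of $\bP^1$, is precisely the excluded case. The observation that the indeterminacy points have residue field $\C$ does not repair this: resolving the indeterminacy introduces exceptional curves over those points, and nothing forces the resolved morphism to contract them into $Q_{1,\C}\cong\bP^1_\C$. (Note also that the Lemma does not assume $V$ projective, so even a correct surface-to-curve extension theorem would not apply as stated.)

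The implication that genuinely needs an argument is: $\{-1,-1\}=0$ in $H^2_\et(F,\Z/2)$ implies the existence of an actual morphism $V\to Q_1$ over $\R$. The paper treats this equivalence as well known and does not attempt to extend the particular rational map furnished by an $F$-point of $Q_1$; the standard route goes through the Brauer group of $V$ itself. Since $V$ is smooth, $\Br(V)\to\Br(F)$ is injective, so the hypothesis forces the class of the constant Azumaya algebra $\H\otimes_\R\cO_V$ to vanish already in $\Br(V)$; one then works with the conic bundle (Severi--Brauer scheme) $Q_1\times_\R V\to V$, whose sections are exactly the morphisms $V\to Q_1$, rather than with a single rational point of $Q_{1,F}$. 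If you want to make this step of the Lemma self-contained, that is the argument to supply; as written, your proposal replaces it with a statement that fails.
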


\begin{proof}
Since $W(V)$ injects into $W(F)$, it suffices to study
the exponent of $W(F)$.  When $V$ is geometrically connected 
($\R(V)$ does not contain $\C$), 
then $-1$ is not a square in $F$. 
Hence the element '2' of $W(F)$ is nonzero, because its
discriminant is $-1$ in $F^\times/F^{\times2}$.
The group $W(F)$ has $I^3(F)=0$ by Proposition \ref{exp.2^d}(iii)
(or \cite[Thm.\,E]{ElmanLam}).
Thus the ideal $I(F)$ has exponent~4, and '8' is zero in $W(F)$.
Thus the image of $\Z$ in $W(F)$ is either $\Z/4$ or $\Z/8$.

It is well known \cite[3.3]{Milnor} that 
the element '4' in $W(F)$ corresponds to the
symbol $\{-1,-1\}$ in $I^2(F)\cong H^2_\et(F,\Z/2)$.
This symbol vanishes if and only if there is a morphism
$V\to Q_1$ defined over $\R$.
\end{proof}

\begin{subremark}
Milnor showed in \cite[3.1, 3.3, 4.1]{Milnor} that $1+w_1+w_2$ 
is an additive isomorphism from $I(F)/I^3(F)$ to
$1+H_\et^1(F)+H_\et^2(F)$, which is a group under
$(1+a_1+a_2)(1+b_1+b_2)=1+(a_1+b_1)+(a_2+b_2+a_1\cup b_1)$,
with the inverse given by $s_1$ and $s_2$. 
For example, the element '2' of $I(F)$ 
maps to $1+\{-1\}$ and '4' maps to $(1+\{-1\})^2=1+\{-1,-1\}$.
\end{subremark}
\goodbreak

\begin{example}\label{Motzkin}
There are real surfaces $V$ for which the image of $\Z\to W(V)$ is $\Z/8$;
since $W(V)\subset W(F)$, this 
is equivalent to the condition that
the function field $F$ of $V$ has level 4 
(i.e., $-1$ is a sum of 4 squares).
As pointed out by Parimala in \cite{ParimalaSujatha}, 
this is the case if $V$ is the hypersurface
in $\bP^3_{\R}$ defined by $z^2-f$, where $f(x,y)$ is 
the Motzkin polynomial $x^4y^2+x^2y^4-3x^2y^1+1$
(or certain other positive polynomials).
\end{example}

\begin{example}\label{W(ExE)}
Consider the real surface $V=E\times E$, where $E$ is 
an elliptic curve with 2 real connected components; 
this is a real form of the abelian variety $V_\C=E_\C\times E_\C$. 
To compute $W(V)$ using Remark \ref{def:j,k}, 
we first compute $H^*_\et(V,\Z/2)$, using the Leray spectral sequence 
\[ H^p(G,H^q_\et(V_\C,\Z/2))\Rightarrow H^{p+q}_\et(V,\Z/2). \]
Because $E(\R)$ contains the exponent~2 points of the group
$E(\C)$, $G$ acts trivially on $H^*_\et(E_\C,\Z/2)$.
Dropping the coefficients $\Z/2$ from the notation,
the K\"unneth formula shows that $G$ also acts trivially on 
$H_\et^1(V_\C)\cong(\Z/2)^4$ and
$H_\et^2(V_\C)\cong(\Z/2)^6$.
We get an exact sequence
\[ 0\!\to (\Z/2)\to H^1_\et(V) \to H^1(V_\C)^G \to 0. \]
Hence $H^1_\et(V)\cong (\Z/2)^5.$ In the corresponding
spectral sequence for $H^*_\et(E)$, the differential
$\Z/2=E_2^{0,2}(E)\to E_2^{2,1}(E)=(\Z/2)^2$ is an injection. 
Hence the differential from $E_2^{0,2}(V)=H^2_\et(V_\C)$ to 
$E_2^{2,1}(V)=H^1_\et(V_\C)$
has rank 2, and we deduce that $H^2_\et(V)\cong (\Z/2)^7.$

Since the stabilization map $H_\et^1(V)\to H_\et^2(V)$ is the cup product
with $[-1]$, we also see that $H^1_\tors(V)\cong(\Z/2)^3$ 
injects into $H^2_\tors(V)\cong(\Z/2)^4$, so Sujatha's invariant $l$
vanishes, and $j=3$, $k=4$.  It follows from 
Remark \ref{def:j,k} that 
\[ W(E\times E)\cong \Z^4 \oplus(\Z/4)^3\oplus \Z/2. \]
\end{example}

\goodbreak
\section{$WR$ and the signature}\label{sec:WR} 

In this section, we define the signature map on $WR(X)$ and relate it 
to the edge map $\eta^0$ in the Bredon spectral sequence for $KO_G(X)$.
First we recall the definition of $WR(X)$ from \cite{KSW}.

Let $X$ be a topological space with involution.
By a {\it Real vector bundle} on $X$ we mean a complex vector bundle
$E$ with an involution $\sigma$ compatible with the involution on $X$
and such that for each $x\in X$ the isomorphism
$\sigma:E_x\to E_{\sigma x}$ is $\C$-antilinear. 
Following Atiyah \cite{Atiyah}, we write $KR(X)$
for the Grothendieck group of Real vector bundles on a compact space $X$.

Recall from \cite{KSW} that the Grothendieck-Witt group $GR(X)$ is
the Grothendieck group of the category of symmetric forms $(E,\phi)$,
where $E$ is a Real vector bundle
on $X$ and $\phi$ is an isomorphism of Real vector bundles
from $E$ to its dual $E^*$ such that $\phi=\phi^*$.

\begin{defn}\label{def:WR}
The Real Witt group $WR(X)$ of a compact $G$-space $X$
is the cokernel of the hyperbolic map $H:KR(X)\smap{} GR(X)$.

By \cite[Thm.\,2.2]{KSW}, there is an isomorphism $KO_G(X)\cong GR(X)$
identifying the hyperbolic map with the map $KR(X)\to KO_G(X)$,
sending a Real vector bundle to its underlying $\R$-linear bundle
with involution.
Thus $WR(X)$ is also the cokernel of $KR(X)\to KO_G(X)$.
\end{defn}

\begin{subex}\label{trivial action}
It is easy to see that $KO_G(\pt)\cong RO(G)$ and the signature
$WR(\pt)\cong\Z$ sends $[\R]$ and $[\R(1)]$ to $1$ and $-1$.
Suppose more generally that $G$ acts trivially on $X$. Then
$KO_G(X)\!\cong KO(X)\otimes R(G)$; we showed in
\cite[2.4]{KSW} that $KR(X)\cong KO(X)$ and $WR(X)\cong KO(X)$.
In this case, the rank of a $KO$-bundle on each component of $X$ 
defines a map $WR(X)\to\Z^\nu$, where $\nu=|\pi_0(X)|$.
\end{subex}

In contrast, $WR(X)$ is a $\Z/2$-algebra when $X=G\times Y$. 
In this case, $WR(X)$
is the cokernel of the natural map $KU(Y)\to KO(Y)$ and 
hence a $\Z/2$-algebra; see \cite[2.4(b) and 2.6]{KSW}.
The following example illustrates this.

\begin{lemma}\label{Arason}
When $V$ is complex projective space $\bP^d_\C$, 
$X=(\bP^d_\C)_\top$ is $G\times\C\bP^d$ and
\[
W(\pt) \cong W(\bP^d_\C) \cong WR(X) \cong \Z/2.
\]
\end{lemma}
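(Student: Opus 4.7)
The variety $V=\bP^d_\C$, viewed as an $\R$-variety, has base change $V\times_\R\C\cong\bP^d_\C\sqcup\bP^d_\C$ (since $V$ is already defined over $\C$), so $V_\top\cong G\times\C\bP^d$ with $G$ acting freely by swapping the two copies. The algebraic identifications $W(\pt)\cong W(\bP^d_\C)\cong\Z/2$ follow from Arason's classical theorem that $W(\bP^d_k)\cong W(k)$ for every field $k$ of characteristic $\neq 2$, applied with $k=\C$ (so that $\pt=\Spec\C$), combined with the elementary computation $W(\C)\cong\Z/2$ (every nondegenerate form over $\C$ diagonalizes as $\langle 1,\ldots,1\rangle$, and is hyperbolic iff of even rank).

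For the topological group $WR(X)$, the discussion immediately preceding this lemma (citing \cite[2.4(b) and 2.6]{KSW}) identifies
\[
WR(X)\cong \coker\bigl(r\colon KU(\C\bP^d)\to KO(\C\bP^d)\bigr)
\]
as a $\Z/2$-module, where $r$ is the realification map. My plan is to show this cokernel equals $\Z/2$ by bounding it above and below. For the lower bound, the rank-mod-$2$ homomorphism $KO^0(\C\bP^d)\to\Z/2$ vanishes on the image of $r$ (since realification doubles ranks) and is evidently surjective, so it descends to a surjection $\coker(r)\twoheadrightarrow\Z/2$.

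For the upper bound, the image of $r$ contains $r([\cO])=2$ and $y:=r([L])-2$, where $L$ is the tautological bundle, and is closed under multiplication by $y$ via the projection formula $r(c(y)\beta)=y\cdot r(\beta)$; combined with the Chebyshev recurrence relating $L^k+\bar L^k$ to $L+\bar L$ in $KU(\C\bP^d)$, this shows the image of $r$ contains the ideal $(2,y)\subseteq KO^0(\C\bP^d)$. The main technical obstacle is verifying that $KO^0(\C\bP^d)/(2,y)\cong\Z/2$; this amounts to checking that the $\eta^2$-torsion classes appearing in the Atiyah--Hirzebruch spectral sequence for $KO(\C\bP^d)$ all lie in $(2,y)$, which I would handle by induction on $d$ using the cofibre sequence $\C\bP^{d-1}\hookrightarrow\C\bP^d\to S^{2d}$ (the base $d=0$ being the immediate calculation $\coker(\Z\xrightarrow{\cdot 2}\Z)=\Z/2$). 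A cleaner fallback, if this bookkeeping becomes onerous, is to invoke that $W(V)\to WR(V_\top)$ is a split surjection whenever $p_g(V)=0$ (as in \cite{RGB} for surfaces, which applies here since $H^0(\bP^d_\C,\Omega^d_{\bP^d})=0$), reducing the upper bound immediately to $W(\bP^d_\C)\cong\Z/2$.
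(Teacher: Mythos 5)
Your setup is right and matches the paper's: you correctly identify $X=G\times\C\bP^d$, reduce $WR(X)$ to the cokernel of the realification $KU(\C\bP^d)\to KO(\C\bP^d)$, handle the algebraic side via Arason, and your lower bound via rank mod $2$ is correct. But the whole content of the topological half of the lemma is the upper bound, i.e.\ that the reduced realification $KU(\C\bP^d,\pt)\to KO(\C\bP^d,\pt)$ is onto, and this is exactly what you do not prove. Your main route reduces it to the claim $KO^0(\C\bP^d)/(2,y)\cong\Z/2$, which you explicitly defer to an induction you have not carried out; this is not a routine bookkeeping step. The potentially troublesome classes come from the top cell when $d\equiv0\pmod4$, where $\widetilde{KO}(S^{2d})\cong\Z$ maps to $\widetilde{KU}(S^{2d})$ with index $2$, and showing that the resulting $\Z/2$ is nonetheless absorbed into the image of $r$ from $KU(\C\bP^d)$ is precisely the nontrivial point. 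The paper does not compute this by hand either: it quotes the theorem of Karoubi and Mudrinski \cite{KM} that the forgetful map $KU(\C\bP^d,\pt)\to KO(\C\bP^d,\pt)$ is surjective, after which the cokernel is computed entirely on the base point, $\coker(\Z\smap{2}\Z)=\Z/2$. You should either cite that result or actually complete the induction.

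Your fallback is also not valid as stated: Theorem \ref{pg=0} (from \cite{RGB}) asserts the split surjectivity of $W(V)\to WR(V_\top)$ only for smooth projective \emph{surfaces} over $\C$, so it covers $d=2$ (and curves are handled separately in the paper), but it does not apply to $\bP^d_\C$ for $d\ge3$. The general comparison results quoted in the introduction give only that $W(V)\to WR(V_\top)$ is an isomorphism modulo bounded $2$-primary torsion, which does not yield surjectivity for free. So neither branch of your argument closes the upper bound, and the proof as written has a genuine gap at its central step.
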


\begin{proof}
The isomorphism $\Z/2=W(\pt)\cong W(\bP^d_\C)$ 
is due to Arason \cite{Arason}.
As noted above, 
$WR(X)$ is the cokernel of $KU(\C\bP^d)\to KO(\C\bP^d)$. In this case, 
Karoubi and Mudrinski \cite{KM} proved that the forgetful map
$KU(\C\bP^d,\pt)\to KO(\C\bP^d,\pt)$ is onto, so 
$WR(\C\bP^d)=\Z/2$ as well.
\end{proof}

\paragraph{\bf The topological signature.}
Now let $X$ be any compact $G$-space, such that $X^G$ has $\nu$ components.
The signature of a symmetric form $(E,\phi)$ on $X$ is an
element of $\Z^\nu$; the signature on a component of $X^G$ is
the signature of the form $\phi_x$ on the vector space $E_x$, 
for any point $x$ of the component (it is independent of the 
choice of $x$). This passes to a homomorphism $KO_G(X)\to\Z^\nu$.

Since hyperbolic forms have signature 0, it also defines
a topological {\it signature} $\sigma:WR(X)\smap{}\Z^\nu$;
the signature of $w\in WR(X)$ is the signature of a quadratic form 
$(E,\phi)$ representing $w$. While $\sigma$ may not be onto
(see Remark \ref{sign-mod2}),
its image has finite 2-primary index in $\Z^\nu$ by \cite[7.4]{KSW}.

As observed in \cite{KSW}, there is a canonical map $W(V)\to WR(V_\top)$
and the composition with the topological signature 
is the classical algebraic signature.


On each component, the signature of an $n$-dimensional symmetric form 
$\phi$ is congruent to $n$ modulo~2.  
If $X$ is connected, it follows that the signature
of $\phi$ lies in the subgroup $\Gamma$ of $\Z^\nu$, 
defined as follows.

\begin{defn}\label{def:Gamma}
Let $\Gamma$ denote the subgroup of $\Z^\nu$
consisting of all $(c_1,...,c_\nu)$ with either 
all $c_i$ even or all $c_i$ odd; $\Gamma$ contains $2\Z^\nu$ as
an index~$2$ subgroup.
The group $\Gamma$ is free abelian of rank $\nu$, but there is no
isomorphism of $\Gamma$ with $\Z^\nu$ which is natural in $X$.
\end{defn}

We now assume $X$ is connected, and
relate the signature to the canonical edge map
\begin{equation}\label{eq:rank}
\eta^0: KO_G(X) \to H^0_G(X;KO_G^0) \cong \Z\oplus\Z^\nu
\end{equation}
in the Bredon spectral sequence \eqref{Bredon-ss}; see Lemma \ref{H^q_G}.
The first coordinate of $\eta^0$ is the rank of the underlying vector bundle.
To describe the other coordinates of $\eta^0$, 
let $\widetilde{KO}_G(X)$ denote the kernel of the rank map 
$KO_G(X)\!\to\!\Z$. For each point $x$ of $X^G$\!, 
$\widetilde{KO}_G(x)$ is isomorphic to $\Z$ on $[\R]\!-\![\R(1)]$.
This yields a map 
$\widetilde{KO}_G(X)\to \widetilde{KO}_G(x)\cong\Z$; 
if $E$ is a $G$-vector bundle of rank $r$ on $X$, the map sends
$[E]-r$ to $[E_x]-r$.  Choosing a point on each of the $\nu$
components of $X^G$ yields the edge map $\eta^0$ 
from $\widetilde{KO}_G(X)$ to $\Z^\nu$. It sends the line bundles
$X\!\times\!\R$ and $X\!\times\!\R(1)$ to $(0,...,0)$
and $(-1,...,-1)$.

Both $H^0_G(X;KO_G)$ and $\nu$ 
are determined by the 1--skeleton of $X$.

\goodbreak
\begin{lemma}\label{signature}
Let $X$ be a connected finite $G$-CW complex whose fixed locus $X^G$\!
has $\nu$ components.  The signature 
$WR(X)\!\smap{\sigma}\!\Z^\nu$\! 
is induced by
\[
KO_G(X)\map{\eta^0} H^0_G(X;KO_G) 
\cong\Z\oplus\Z^\nu\ \map{\rho}\ \Gamma \subseteq \Z^\nu\!,
\]
where $\eta^0$ is the edge map in \eqref{Bredon-ss}
and $\rho$ is given by 
$$\rho(r,a_1,...,a_\nu)=(r+2a_1,...,r+2a_\nu).$$
\end{lemma}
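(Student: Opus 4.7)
The plan is to establish $\sigma=\rho\circ\eta^0$ as maps $KO_G(X)\to\Z^\nu$ by a naturality argument that reduces to a single fixed point, and then to observe that both maps factor through $WR(X)$. First, since every coordinate of $\rho(r,a_1,\ldots,a_\nu)=(r+2a_1,\ldots,r+2a_\nu)$ has the same parity as $r$, the image of $\rho$ lies in $\Gamma$ (see Definition \ref{def:Gamma}), so the target statement is meaningful. Next, choose a point $x_i$ in each of the $\nu$ components $C_i$ of $X^G$ and use the $G$-equivariant inclusions $x_i\hookrightarrow X$. The signature on $C_i$ is by definition the signature of $\phi_{x_i}$ on $E_{x_i}$, so $\sigma$ factors through the product of restrictions $KO_G(X)\to\prod_i KO_G(x_i)$. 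The description of $\eta^0$ recalled in the excerpt exhibits precisely the same factorization: the $\Z$-factor is the global rank, and the $i$-th entry of the $\Z^\nu$-factor is read off from $[E_{x_i}]-r[\R]\in\widetilde{KO}_G(x_i)$ in the basis $[\R]-[\R(1)]$.

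It therefore suffices to verify the identity at a single fixed point $x$. There $KO_G(x)\cong RO(G)=\Z[\R]\oplus\Z[\R(1)]$; writing a general element as $\alpha[\R]+\beta[\R(1)]$ of rank $r=\alpha+\beta$, the computation
\[
[E_x]-r[\R]=(\alpha-r)[\R]+\beta[\R(1)]=-\beta\bigl([\R]-[\R(1)]\bigr)
\]
shows that the fixed-point coordinate of $\eta^0$ is $a=-\beta$. Then $\rho(r,a)=r-2\beta=\alpha-\beta$, which by Example \ref{trivial action} is precisely the signature of the real $G$-representation $\alpha[\R]+\beta[\R(1)]$. Hence $\sigma=\rho\circ\eta^0$ on $KO_G(X)$; since the signature vanishes on hyperbolic forms, so does $\rho\circ\eta^0$, and the common composite descends to the desired signature map $WR(X)\to\Gamma$.

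The main obstacle is entirely bookkeeping: one must be careful that the basis choice $[\R]-[\R(1)]$ of $\widetilde{KO}_G(x)$ conspires with the formula $\rho(r,a)=r+2a$ to produce $\alpha-\beta$ and not $\beta-\alpha$. Once the identification of $\eta^0$ with restriction to fixed points (from Lemma \ref{H^q_G}) and the signature conventions of Example \ref{trivial action} are pinned down, the rest is a routine calculation in $RO(G)$.
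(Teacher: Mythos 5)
Your proof is correct and follows essentially the same route as the paper: reduce to the chosen basepoints of the components of $X^G$ (the paper phrases this via the comparison diagram with the discrete $G$-space $Y$ of basepoints coming from Lemma \ref{H^q_G}), then verify the identity in $RO(G)$ using the basis $[\R]$, $[\R]-[\R(1)]$, where your sign bookkeeping ($a=-\beta$, so $\rho(r,a)=\alpha-\beta$) matches the paper's computation $s(r[\R]+a\lambda)=r+2a$. The final descent to $WR(X)$ because hyperbolic forms have signature zero is also how the paper's statement is meant.
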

\goodbreak

\begin{proof}
If $\pt$ is a single point, we choose $[\R]$ and
$\lambda=[\R]-[\R(1)]$ as a basis of 
$H^0_G(\pt,\Z)\cong RO(G)\cong\Z^2$.
The projection `$+$' in Example \ref{ex:coeff}(a)
from $H^0_G(\pt,\Z)\cong RO(G)$ to $H^0(\pt/G,\Z)\cong\Z$
is split by the map sending the generator to $[\R]$, and
the kernel is generated by $\lambda$.

The image of $KR(\pt)\cong\Z$ in $H^0_G(\pt,\Z)$ is generated by 
$[\R]+[\R(1)]=2[\R]-\lambda$; it is the kernel of the map
$H^0_G(\pt,\Z)\map{s}\Z$ sending $(r[\R]+a\lambda)$ to its signature $r+2a$.
Therefore $s$ induces the signature $\sigma:WR(\pt)\map{\cong}\Z$.

Now let $Y$ denote $\nu$ discrete points. It follows that 
the signature $WR(Y)\smap{\cong}\Z^\nu$ is induced by the 
composition of the map
\[
KO_G(Y)\map{\cong} H^0_G(Y,\Z)\map{\cong} 
H^0(Y^G,\Z)\oplus H^0(Y/G,\Z) \cong \Z^\nu\oplus \Z^\nu,
\]
with the map $\Z^{2\nu}\smap{s}\Z^\nu$,
sending $(r_1,....,a_1,...)$ to $(r_1+2a_1,...,r_\nu+2a_\nu)$.  
The general case now follows from the following diagram,
whose rows are from Lemma \ref{H^q_G}
\begin{equation*}
\xymatrix@R=1.5em{
0\ar[r]&H^0(X^G,\Z)\ar[d]^{\cong}\ar[r]&
H^0_G(X;KO_G)\ar[r]^{+}\ar[d]^{\mathrm{into}}&
H^0(X/G,\Z) \ar[r]\ar[d]^{\mathrm{diag}}&0 \\
0\ar[r]&H^0(Y^G,\Z)\ar[r]^{\lambda}&
H^0_G(Y;KO_G)\ar[r]^{+}&H^0(Y,\Z) \ar[r]&0
}
\end{equation*}
and whose columns are
associated to the evident map $Y\to X^G\subseteq X$, obtained by
choosing a basepoint for each component of $X^G$.
By inspection, the map $\rho:H^0_G(X;KO_G)\to\Gamma$ is the
restriction of the map $s:H^0_G(Y;KO_G)\to\Z^\nu$. It follows that
$\rho\eta^0_X$ is the restriction of $s\eta^0_Y$, which is
the signature.
\end{proof}

\begin{subremark}\label{H(1)}
The sequence $KR(X)\smap{}\Z^{1+\nu}\smap{\rho}\Gamma$
of Lemma \ref{signature} is exact.
Indeed, the kernel of $\rho$ is the subgroup of $\Z^{1+\nu}$
generated by $(2,-1,...,-1)$. This is the image of
$\eta^0\circ H: KR(X)\to\Z\oplus \Z^{\nu}$, because 
$\eta^0\circ H$ maps the trivial Real line bundle $X\times\C$
to $(2,-1,...,-1)$.
\end{subremark}

\goodbreak
\section{$WR$ for low dimensional $X$}\label{sec:lowdim}

In this section, we determine $WR(X)$ when $\dim(X)\le2$.
There is no harm in assuming that $X$ is an irreducible $G$-space, 
so that $X$ is either connected or has the form $X=G\times Y$
with $Y$ connected.
We saw in \cite[7.1]{RGB} that $WR(G\times Y)\cong\Z/2\oplus H^1(Y,\Z/2)$
when $\dim(Y)\le2$.
Thus we are reduced to the case when $X$ is connected.

In general, there is a map $w_1:WR(X)\to\H^1_G(X,\Z/2)$,
defined in \ref{def:w1} and \ref{def:w_n} below.
Let us write $[-1]$ for the element $w_1(X\!\times\!\R(1))$ 
of $\H^1_G(X,\Z/2)$.
We will see in Remark \ref{Xcovers} that $[-1]$ is always nonzero
in  $\H^1_G(X,\Z/2)$. Let $\tH^1(X/G,\Z/2)$ denote the
quotient of $H^1(X/G,\Z/2)$ by the subgroup generated by 
the element $[-1]=w_1(X\!\times\!\R(1))$.

\begin{lemma}\label{WR(graph)}
When $X$ is a connected 1-dimensional $G$-complex,
and $X^G$ has $\nu$ components, we have 
\[  WR(X) = 
\begin{cases}\Z^\nu \oplus H^1(X/G,\Z/2), & \nu>0, \\ 
             \Z/4\oplus \tH^1(X/G,\Z/2), &\nu=0.
\end{cases}\]
\end{lemma}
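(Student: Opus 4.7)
My plan is to extract the computation from the Bredon spectral sequences for $KR$ and $KO_G$. Since $\dim X \le 1$, each such spectral sequence degenerates into a short exact sequence. Combining these via the snake lemma applied to the hyperbolic map, and noting that the hyperbolic map is injective on the relevant coefficients $KR^0 \to KO_G^0$ (at both $G/G$ and $G/e$), so that the connecting homomorphism vanishes, I expect to obtain a short exact sequence
\[
0 \to H^1_G(X; WR^{-1}) \to WR(X) \to H^0_G(X; WR^0) \to 0.
\]

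I would then identify the coefficient system $WR^{-1}$. At the orbit $G/G$, Example \ref{trivial action} gives $WR^{-1}(\pt) = \widetilde{WR}(S^1) \cong \widetilde{KO}(S^1) = \Z/2$. At the orbit $G/e$, the formula $WR(G\times Y) = \coker(KU(Y)\to KO(Y))$ applied with $Y=S^1$ yields $WR^{-1}(G) \cong \coker(0\to \Z/2) = \Z/2$, and the restriction map is an isomorphism. Hence $WR^{-1}$ is the constant coefficient system $\underline{\Z/2}$, and $H^1_G(X; WR^{-1}) \cong H^1(X/G, \Z/2)$.

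For the degree-zero piece, when $\nu>0$ I apply Lemma \ref{signature} and Remark \ref{H(1)}: the image of the hyperbolic map $H^0_G(X;KR^0) \to H^0_G(X;KO_G^0) \cong \Z\oplus\Z^\nu$ is exactly the kernel of $\rho$, so $H^0_G(X;WR^0)\cong \Gamma \cong \Z^\nu$ and the induced surjection $WR(X) \twoheadrightarrow \Z^\nu$ is the topological signature. Since $\Z^\nu$ is free abelian, the signature splits the extension, giving $WR(X) \cong \Z^\nu \oplus H^1(X/G, \Z/2)$. When $\nu=0$, there are no fixed points and $H^0_G(X; KO_G^0) \cong H^0(X/G, \Z) = \Z$; the image of $KR(X)$ is $2\Z$ (since $[\C]$ has real rank $2$), so the quotient is $\Z/2$.

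The hard part is showing that when $\nu=0$ the extension $0 \to H^1(X/G,\Z/2) \to WR(X) \to \Z/2 \to 0$ is non-split and produces a $\Z/4$-summand. My approach is to lift the generator of the quotient by the class $\eta = \langle 1\rangle \in WR(X)$ pulled back from $WR(\pt) = \Z$. Then $2\eta = \langle 1,1\rangle$ lies in the kernel $H^1(X/G,\Z/2)$, and via the $w_1$-invariant of Section \ref{sec:SW} its discriminant is $-1$, so $2\eta = [-1]$, which is nonzero because the double cover $X\to X/G$ is nontrivial when $\nu=0$ (Remark \ref{Xcovers}). The delicate step is then to verify that $4\eta = 0$ in $WR(X)$, by identifying $\langle 1,1,1,1\rangle$ as a hyperbolic form coming from a Real bundle associated to the free $G$-action on $X$. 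This forces $\eta$ to have order exactly $4$, yielding a $\Z/4$ direct summand with complement $H^1(X/G,\Z/2)/\langle [-1]\rangle = \tH^1(X/G,\Z/2)$, as required.
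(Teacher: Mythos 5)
Your proposal is correct and follows essentially the same route as the paper: the two-row Bredon spectral sequences for $KR$ and $KO_G$, the splitting of Lemma \ref{H^q_G}, and Remark \ref{H(1)} for the $H^0$-components, with the $\nu=0$ extension pinned down by $2\cdot\langle1\rangle=[-1]$ (this is exactly the content of Lemma \ref{Z/4}). One simplification: your ``delicate step'' that $4\eta=0$ needs no identification of $\langle1,1,1,1\rangle$ with a hyperbolic form of a Real bundle --- since $2\eta=[-1]$ already lies in the kernel $H^1(X/G,\Z/2)$, which has exponent $2$, the relation $4\eta=2\,[-1]=0$ is automatic, and $WR(X)\cong\Z/4\oplus\tH^1(X/G,\Z/2)$ then follows by choosing a basis of $H^1(X/G,\Z/2)$ containing $[-1]$.
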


\begin{proof}
The spectral sequences of  \eqref{Bredon-ss} for $KO_G$ and $KR$
converge, having only two nonzero rows.  By Lemma \ref{H^q_G}, we have
\[
  KO_G(X)\cong\Z\oplus\Z^\nu\oplus H^1_G(X;KO_G^{-1}),\qquad
  KR(X)\cong\Z\oplus H_G^1(X;KR^{-1}),
\]
 and  the cokernel of $H^1_G(X;KR^{-1})\to\! H^1_G(X;KO_G^{-1})$
is $H^1(X/G,\Z/2)$. Thus it remains to consider the cokernel
of the map of $H^0_G$-components $\Z\to\Z\oplus\Z^\nu$.
It is described in Remark \ref{H(1)}, and
yields the displayed calculation.
\end{proof}

\begin{example}\label{WR(T)}
There are three actions of $G$ on the circle. Recall that
$S^{p,q}$ denotes the sphere in $\R(1)^p\times\R^q$.

(i) $S^{0,2}$ is the circle with trivial $G$-action. Let $\mu$
denote the M\"obius line bundle. Then
$KO_G(S^{0,2})$ is $\Z^2\oplus(\Z/2)^2$ with generators the line bundles
$1$, $X\times\R(1)$, $\mu$ and $\mu\otimes \R(1)$, while 
$WR(S^{0,2})\cong KO(S^{0,2})$ is $\Z\oplus\Z/2$ 
by Example \ref{trivial action}.

(ii) $S^{2,0}$ is the circle with the antipodal involution on $S^1$. 
Lemma \ref{WR(graph)} yields $WR(S^{2,0})\cong\Z/4$.

(iii) $S^{1,1}$ is the unit circle in $\C$, with the induced complex
conjugation. By Lemma \ref{WR(graph)},
\[  
WR(S^{1,1}) \cong \Z^2.
\]
The Real line bundle $E=S^{1,1}\times\C$ carries a canonical
Real symmetric form $\theta=t$, which is multiplication by $t$ on 
the fiber over $t\in S^{1,1}$.  By \cite[Thm.\,2.2]{KSW}, this corresponds
to the $G$-subbundle of $E$ whose fiber over $t$ is $it\cdot\R$.
It is a nontrivial element of $\Pic_G(S^{1,1})\cong\{\pm1\}^2$,
and its discriminant 
$\iota=w_1(E,t)$ is nontrivial.

If $X\smap{a}S^{1,1}$ is an equivariant map, the pullback of $(E,t)$ 
under the map $a^*:\Pic_G(S^{1,1})\to \Pic_G(X)$ is the class of 
$(X\!\times\!\C,a)$ in $\Pic_G(X)$, and 
its discriminant is $w_1(a^*(E,t))=a^*(\iota)$.
\end{example}

\begin{lemma}\label{2-mfld}
Let $X$ be a closed connected 2-manifold with involution,
such that $X^G$ is a union of $\nu$ circles.  Then
\[
WR(X) = \begin{cases}\Z^\nu \oplus (\Z/2)^h, &\nu>0 \\
\Z/4\times (\Z/2)^{h-1}, &\nu=0,\end{cases}
~ \textrm{where~} h=\dim H^1(X/G,\Z/2).
\]
\end{lemma}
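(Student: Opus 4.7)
The plan is to extend the argument of Lemma \ref{WR(graph)} by incorporating the row $p=2$ that now appears in the Bredon spectral sequences \eqref{Bredon-ss} for $KO_G(X)$ and $KR(X)$. Both converge with contributions in degree zero only from $p = 0, 1, 2$ since $\dim X = 2$, and the hyperbolic map $H\colon KR(X) \to KO_G(X)$ induces a map between them, endowing $WR(X) = \coker(H)$ with a filtration whose graded pieces are the cokernels on each $E_\infty^{p, -p}$.

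For $p = 0$ and $p = 1$ the calculation is exactly as in Lemma \ref{WR(graph)}. By Lemma \ref{H^q_G}, the $p = 0$ pieces of $KO_G(X)$ and $KR(X)$ are $\Z \oplus \Z^\nu$ and $\Z$ respectively, and Remark \ref{H(1)} identifies the cokernel on this row as $\Z^\nu$ when $\nu > 0$, and as a piece that contributes to a $\Z/4$ when $\nu = 0$. On the $p = 1$ row, the cokernel of $H^1_G(X; KR^{-1}) \to H^1_G(X; KO_G^{-1})$ is $H^1(X/G, \Z/2)$, contributing $H^1$ or its quotient $\tH^1$ depending on $\nu$.

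The key new step is the $p = 2$ row. I would show that $H^2_G(X; KR^{-2}) \to H^2_G(X; KO_G^{-2})$ is surjective, using a short exact sequence $0 \to KR^{-2} \to KO_G^{-2} \to \mathcal{Q} \to 0$ of Bredon coefficient systems in which $\mathcal{Q}$ is supported on the orbit type $G/e$. This reduces the question to an ordinary $H^2$ of a space related to the free part $X \setminus X^G$, which has nonempty boundary when $\nu > 0$ (so the relevant $H^2$ vanishes), or contributes at most a $\Z/2$ that gets absorbed into the extension with the $p = 0$ piece when $\nu = 0$. Making this precise — identifying the coefficient systems explicitly and ruling out unexpected differentials $d_r$ into $E_r^{p, -p}$ — is the main obstacle.

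With the $E_\infty$ pieces identified, the extension problem is resolved as follows. For $\nu > 0$, the topological signature of Lemma \ref{signature} splits off the $\Z^\nu$ summand, leaving $(\Z/2)^h$. For $\nu = 0$, the extension between the $p = 0$ and $p = 1$ pieces is precisely the nonsplit one already present in the 1-dimensional case, yielding $\Z/4 \oplus \tH^1(X/G, \Z/2) = \Z/4 \oplus (\Z/2)^{h-1}$, since $[-1]$ is nonzero in $H^1(X/G, \Z/2)$ by Remark \ref{Xcovers}.
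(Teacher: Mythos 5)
Your overall architecture (compare the Bredon spectral sequences for $KR$ and $KO_G$ row by row, kill the $p=2$ contribution, then run the extension analysis of Lemma \ref{WR(graph)}) is viable, and your treatment of the rows $p=0,1$ and of the extensions matches the paper. The gap is exactly where you flag it: the $p=2$ row, and the mechanism you propose for it does not work. By Example \ref{ex:coeff}, the natural map of coefficient systems $KR^{-2}\to KO_G^{-2}$ is $(\Z/2\xrightarrow{\,0\,}\Z(1))\to(\Z/2\otimes RO(G)\xrightarrow{+}\Z/2)$; at the free orbit it is the surjection $\Z\to\Z/2$ (kernel $2\Z$, since $KU^{-2}(\pt)\to KO^{-2}(\pt)$ is onto) and at the fixed orbit it is the non-surjective injection $\Z/2\to(\Z/2)^2$. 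So it is neither injective nor does it have cokernel supported on free orbits, and the short exact sequence $0\to KR^{-2}\to KO_G^{-2}\to\mathcal{Q}\to0$ you invoke does not exist. Moreover, for a system of the form $(0\to A)$, Lemma \ref{HG(0-A)} gives the \emph{relative} group $H^2_G(X,X^G;A)$, which does not vanish for a compact surface with boundary (Lefschetz duality gives $H^2(X/G,X^G;\Z/2)\cong\Z/2$ when $\nu>0$); the group that does vanish, and which the paper uses, is the target itself: by Lemma \ref{H^q_G}, $H^2_G(X;KO_G^{-2})\cong H^2(X^G,\Z/2)\oplus H^2(X/G,\Z/2)=0$ for $\nu>0$, because $X^G$ is a union of circles and $X/G$ is a 2-manifold with nonempty boundary. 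Once you observe this, no surjectivity statement is needed for $\nu>0$ and your argument collapses to the paper's.

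For $\nu=0$ the phrase ``contributes at most a $\Z/2$ that gets absorbed into the extension with the $p=0$ piece'' is not a proof and misstates what must happen: here $E_2^{2,-2}(KO_G)=H^2(X/G,\Z/2)=\Z/2$, and if this class were \emph{not} in the image of $KR(X)$ it would survive to $WR(X)$ and change the answer to $\Z/8\times(\Z/2)^{h-1}$ or $\Z/4\times(\Z/2)^{h}$. What is actually needed is that $F_2KO_G(X)$ lies in the image of the hyperbolic map, i.e.\ that $E_2^{2,-2}(KR)\cong H^2_G(X;\Z(1))\cong H^2(X/G;\Z(1))$ (Corollary \ref{KR(2)}, the action being free) surjects onto $H^2(X/G,\Z/2)$; this holds because on this summand the coefficient map is reduction mod~2 and $H^3(X/G;\Z(1))=0$ for the 2-dimensional complex $X/G$, so the Bockstein sequence gives surjectivity. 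You have supplied neither this argument nor a substitute; the paper sidesteps the issue by quoting \cite[8.1]{RGB} for $\nu=0$ and proves only the case $\nu>0$ directly, via the vanishing of $H^2(X/G,\Z/2)$ and $H^2(X^G,\Z/2)$ followed by the argument of Lemma \ref{WR(graph)}.
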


\goodbreak
\begin{proof}
If $\nu=0$, this was proven in \cite[8.1]{RGB}, so suppose $\nu>0$.
Since $X/G$ is a 2-manifold with boundary,  $H^2(X/G,\Z/2)=0$.
(See the argument of \cite[1.9]{PW}.) As $H^2(X^G,\Z/2)=0$,
Lemma \ref{H^q_G} yields
$H^2_G(X;KO_G^{-2})=0$ and $H^1_G(X;KO_G^{-1})=(\Z/2)^{h+\nu}$.  
By \eqref{Bredon-ss}, we have
$KO_G(X)=\Z^{\nu+1}\oplus (\Z/2)^{h+\nu}$. 
Now use the argument of Lemma \ref{WR(graph)}.
\end{proof}

If $C$ is a smooth projective curve over $\R$, $C_\top$ is 
a 2-manifold whose genus $g$ is $h$ when $\nu>0$, and $h-1$ when $\nu=0$.
Thus we recover the following calculation of $WR(C_\top)$, 
which was derived in \cite[Thms.\,4.6 and 4.7]{KSW}, using
$H^2_\et(C,\Z/2)\cong(\Z/2)^{2\nu}$ and $\Br(C)\cong(\Z/2)^\nu$.
(The fact that $W\cong WR$ for curves is also in {\it loc.\,cit.})


\goodbreak
\begin{corollary}\label{WR(curve)}
If $C$ is a smooth geometrically connected projective curve of genus $g$
over $\R$, with $\nu$ real components, then
\[
W(C)\cong WR(C_\top) \cong \begin{cases}
\Z^\nu\oplus (\Z/2)^g, & \nu>0, \\ \Z/4\oplus (\Z/2)^g, &\nu=0.
\end{cases}\]
\end{corollary}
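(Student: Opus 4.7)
The plan is to reduce the computation to Lemma \ref{2-mfld} and then determine the two invariants $\nu$ and $h=\dim H^1(C_\top/G,\Z/2)$ appearing there. For a smooth projective curve $C$ of genus $g$ over $\R$, the space $C_\top$ is a closed orientable $2$-manifold of genus $g$, and the $G$-action is complex conjugation, which is anti-holomorphic and hence orientation-reversing; its fixed locus $C_\top^G=C(\R)$ is a disjoint union of $\nu$ circles. Thus Lemma \ref{2-mfld} applies, and the algebraic identification $W(C)\cong WR(C_\top)$ follows from \cite{KSW} as already recalled in the excerpt just before the statement.

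The main step is to evaluate $h$ in each case via an Euler-characteristic calculation. Since $C_\top^G$ is a $1$-manifold, $\chi(C_\top^G)=0$, and the standard relation for a $G$-CW complex
\[
\chi(C_\top)=2\chi(C_\top/G)-\chi(C_\top^G)
\]
yields $\chi(C_\top/G)=1-g$ regardless of $\nu$. When $\nu>0$, the quotient $C_\top/G$ is a compact connected surface with $\nu$ boundary circles, so $H^2(C_\top/G,\Z/2)=0$ and therefore
\[
h=\dim H^1(C_\top/G,\Z/2)=1-\chi(C_\top/G)=g.
\]
Substituting into Lemma \ref{2-mfld} gives $WR(C_\top)\cong \Z^\nu\oplus(\Z/2)^g$. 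When $\nu=0$, the involution is free and orientation-reversing, so $C_\top/G$ is a closed non-orientable surface; Poincar\'e duality with $\Z/2$-coefficients gives $H^0\cong H^2\cong\Z/2$, and hence
\[
h=\dim H^1(C_\top/G,\Z/2)=2-\chi(C_\top/G)=g+1.
\]
Thus $h-1=g$, and Lemma \ref{2-mfld} returns $WR(C_\top)\cong \Z/4\oplus(\Z/2)^g$, matching the stated formula.

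I do not expect any real obstacle: the two points requiring explicit verification are the Euler-characteristic formula $\chi(C_\top)=2\chi(C_\top/G)-\chi(C_\top^G)$, which is standard for finite $G$-CW complexes, and the fact that a free orientation-reversing involution of a closed orientable surface has a non-orientable quotient, which follows because any orientation of the quotient would lift to a $G$-invariant orientation upstairs.
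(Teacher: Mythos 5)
Your proof is correct and follows essentially the same route as the paper: the paper also deduces the corollary from Lemma \ref{2-mfld} together with the relation $g=h$ (for $\nu>0$) and $g=h-1$ (for $\nu=0$), citing \cite{KSW} for $W(C)\cong WR(C_\top)$. The only difference is that the paper simply asserts the relation between $g$ and $h$, whereas you justify it with the (correct) Euler-characteristic and orientability arguments.
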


\begin{subremark}\label{affine curve}
When $C(\R) = \emptyset$, and $U\subset C$ is the complement of
$n>0$ (complex) points, then $W(U)\cong W(C)\oplus(\Z/2)^{n-1}$.
This follows from Corollary \ref{WR(curve)}, \eqref{I_n-seq},
and the fact that $\Pic(C)/2\cong\Z/2$. 
In addition, $W(U)\cong WR(U)$ by \cite[4.1]{KSW}.
We leave the details to the reader. 
\end{subremark}

\begin{example}\label{WR(2-sphere)}
We can now calculate $WR(X)$ of a 
2-sphere with involution.
Writing $S^{p,q}$ for the sphere in $\R(1)^p\times\R^q$, we have:
\[
WR(S^{0,3})\cong\Z\oplus\Z/2;\quad 
WR(S^{1,2})\cong\Z;\quad WR(S^{2,1})\cong\Z^2\oplus\Z/2;
\]
and $WR(S^{3,0})\cong\Z/4$. 
The calculation $WR(S^{0,3})\cong KO(S^2)\cong\Z\oplus\Z/2$ 
is from Example \ref{trivial action}.
The formulas for the genus~0 curves $S^{1,2}=\C\bP^1$ and $S^{3,0}=Q_1$
are from Lemma \ref{2-mfld}. 

For $X=S^{2,1}$, we note that $X^G$ is 2 points and $X/G\simeq S^2$.
Therefore $H_G^1(S^{2,1},\Z/2)=0$ and $H_G^2(X;KO^{-2})\cong\Z/2$, 
using Lemma \ref{H^q_G}. Thus $KO_G(X)\cong\Z^3\oplus \Z/2$.
We also have $\H_G^1(S^{2,1}\!,\Z/2)\cong\Z/2$.
As in the proof of Lemma \ref{2-mfld}, we have $KR(X)=\Z$, and
hence $WR(S^{2,1}) 
\cong\Z^2\oplus \Z/2$.

More generally, if $q>0$ then
$KR^n(S^{p,q}\!,\pt) \cong KO^{n+p-q+1}(\pt)$.
This follows easily from Atiyah's sequence \cite[(2.2)]{Atiyah}
for $(X,Y)=(B^{p,q},S^{p,q})$.
In addition, because $S^{p,q}$ is the 1--point compactification of
$\R(1)^p\times\R^{q-1}$,
$KO_G(S^{p,q},\pt)$ is isomorphic to the $K$-theory with compact supports
$KO_G^{1-q}(\R(1)^p)$, a group isomorphic to the topological
$K$-theory $K_\top^{1-q}(C^{0,p+1})$ of the Clifford algebra $C^{0,p+1}$;
see \cite{MK-8}.

If $G$ acts trivially on $Y$ then
$KO_G(Y\times S^{p,q},Y)\cong K_\top^{1-q}(C^{0,p+1}(Y))$,
where $C^{0,p+1}(Y)$ is the Banach algebra of continuous functions from
$Y$ to $C^{0,p+1}$.

For completeness, we note that when $q=0$ and $p>0$ the group
$KO_G(Y\times S^{p,0})$ is $KO(Y\times\R\bP^{p-1})$. This group
was computed by Adams (when $Y$ is a point) and in general by 
the first author in \cite{MK-8}.

For later use in Example \ref{ExE}, we record that
\[
KO_G(S^{1,3}\!,\pt)\cong\Z/2,~ KO_G(S^{2,2}\!,\pt)=0 
\text{ ~and~ } KO_G(S^{2,3}\!,\pt)=\Z.
\]
Using this, we can check that
$WR(S^{1,3}) \cong WR(S^{2,2}) \cong WR(S^{2,3}) \cong \Z$.
(An alternative calculation of $WR(S^{2,2})$ was given in
Example \ref{S22}.)
\end{example}

%

\begin{example}\label{SGm}
Let $T$ be the torus $S^1\times S^{1,1}$, where $G$ acts trivially on $S^1$.
Then $T$ is a closed connected 2-manifold with involution,
$T^G$ is a union of $2$ circles, and $T/G\cong S^1\times[0,1]$.
From Lemma \ref{2-mfld} and Example \ref{WR(T)}(i), 
we see that $WR(S^1)\cong \Z\oplus\Z/2$ and
\[ 
WR(T)\cong \Z^2\oplus\Z/2 \not\cong WR(S^1) \oplus WR(S^1).
\]
If $C$ is the affine circle $x^2+y^2=1$ over $\R$
and $V=C\times\Gm$, then $W(C)\cong\Z\oplus\Z/2$ and
$W(V)\cong \Z^2\oplus(\Z/2)^2$ by the Fundamental Theorem for $W(C)$
\cite[pp.\,138--9]{MKlocalisation}.
Since $C_\top$ and $V_\top$ are equivariantly homotopic to 
$S^1$ and $T$, respectively, it follows that
$W(V)\not\cong WR(V_\top)$. 
\end{example}


\begin{example}\label{plane}
The projective plane $\bP^2$ over $\R$ has $\bP^2_\top=\C\bP^2$ and
\[  W(\bP^2)\!\cong WR(\C\bP^2)\!=\Z.  \]
Because $W$ and $WR$ are birational invariants of a surface (see Theorem
\ref{birational}), this also implies that 
$W(\bP^1\times\bP^1)\cong WR(\C\bP^1\!\times\!\C\bP^1)\cong \Z$.

Arason proved that $W(\bP^d)=\Z$ for all $d$;
see \cite{Arason}. 
The $G$-space $X=\bP^2_\top$ is $\C\bP^2$ with $X^G=\R\bP^2$.
The quotient $X/G$ has $H^*(X/G,\Z)\cong H^*(S^4,\Z)$. 
In fact, $X/G$ is homotopy equivalent to $S^4$; 
writing $\C\bP^2$ as the cone of the Hopf map, one sees
that $X/G\simeq\Sigma(S^3/G)\simeq S^4$.
By Lemma \ref{H^q_G}, 
\[ E_2^{p.q}=H^p_G(X;KO_G^q) =
H^p(\R\bP^2\!,KO^q)\oplus H^p(X/G,KO^q),
\] 
and the second factor vanishes unless $p=0,4$. Thus 
\[
H^p_G(X;KO_G^{-p}) = \begin{cases}  \Z^2,& p=0; \\
\Z/2,&  p=1,2;\\ 
\Z, &   p=4;\\
0,& \text{otherwise.}\end{cases}
\]
Since $KO^{n}_G(X)$ maps onto $KO^{n}_G(\pt)$ and $KO_G(X)$ maps to
$KO(\R\bP^2)=\Z\oplus\Z/4$, the spectral sequence \eqref{Bredon-ss} 
degenerates to show that $KO_G(X)$ is the extension $\Z^3\oplus\Z/4$
of $\Z^3\oplus\Z/2$ by $\Z/2$.  By Lemma \ref{H^q_G},
Remark \ref{KR(-1)}, Lemma \ref{H4-iso} and Remark \ref{H(1)},
$KR(X)$ maps onto the torsion, the final $\Z$,
and the subgroup of $\Z^2\oplus\Z/2\oplus\Z$ generated by
$(2,-1,[-1],0)$.  It follows that $WR(X)\cong\Z$.
\end{example}

\begin{subremark}\label{KR(P2)}
Atiyah proved in \cite[2.4]{Atiyah} that $KR(\C\bP^2)\cong \Z[t]/t^3$.
In the filtration arising from \eqref{Bredon-ss}, $t$ is detected by
$H^1_G(X,KR^{-1})\cong\Z/2$; $t^2$ and $2t$ are in filtration level~2.
\end{subremark}

\goodbreak\medskip 
\section{The equivariant Picard group}\label{sec:Pic_G}

In this section $X$ will be a connected $G$-space,
and $\nu$ will denote the number of components of $X^G\!.$

\begin{definition}\label{def:PicG}
Let $\Pic_G(X)$ denote the group of rank~1 real $G$-vector bundles 
on $X$ with product $\otimes$;
its unit `$1$' is the trivial line bundle $X\times\R$.
If $L$ is such a bundle,  $L\otimes L$ is trivial, and
the involution on the fibers of $L$ over fixed points is 
multiplication by $\pm1$. This involution is locally constant,
so if $X^G$ has $\nu$ components 
it defines a natural map $\Pic_G(X)\to\{\pm1\}^\nu$, 
called the {\it sign map}.

Let $\Pic^0_G(X)$ denote the kernel of the sign map.
There is a natural isomorphism $\Pic^0_G(X)\smap{\cong} H^1(X/G,\Z/2)$,
defined as follows.
If $L$ is a $G$-line bundle on $X$ with trivial sign (so 
$G$ acts trivially on $L_x$ for all $x\in X^G$, then the 
identifications $L_x\cong L_{\sigma x}$ imply that $L$
descends to a line bundle $L/G$ on $X/G$. Since real line 
bundles on $X/G$ are classified by the group $H^1(X/G,\Z/2)$, this
defines a map $\Pic_G^0(X)\to H^1(X/G,\Z/2)$. 
Conversely, the pullback of such a
line bundle along $X\to X/G$ is an equivariant line bundle on $X$
with trivial sign, i.e., an element of $\Pic_G^0(X)$.

A natural isomorphism $\Pic_G(X)\smap{\omega}\H_G^1(X,\Z/2)$,
was constructed in \cite[3.2]{RGB}.
It is compatible with the exact sequence
\[
0 \to H^1(X/G,\Z/2) \to \H^1_G(X,\Z/2)\ \map{\textrm{sign}}\ H^0(X^G,\Z/2).
\]
of Example \ref{Borel}, as we see from the map 
$\H^1_G(X,\Z/2)\to\H^1_G(X^G,\Z/2)$.
This exact sequence, 
combined with either Example \ref{delPezzo} or Example \ref{PicG=0}, 
shows that the sign map is not always onto when $\nu\ge2$.
\end{definition}

\begin{subremark}\label{Xcovers}
The $G$-line bundle $X\times\R(1)$ is nontrivial. 
This is clear if $X^G\ne\emptyset$, as $\R(1)$ has a nontrivial sign. 
When $X^G=\emptyset$, it is a nonzero element of 
$\Pic^0_G(X)\cong H^1(X/G,\Z/2)$. This is because
$X\to X/G$ is a nontrivial covering space, covering spaces
with group $G$ are classified by elements of $H^1(X/G,\Z/2)$, and
we showed in Example 2.7 of \cite{KSW} that $\omega(X\times\R(1))$ 
is the element classifying this particular cover.
\end{subremark}

\begin{subremark}
If $V$ is a geometrically ireducible variety over $\R$, and $X=V_\top$,
Cox's Theorem \cite{Cox} states that
\[ \Pic_G(X) \cong \H^1_G(X,\Z/2) \cong H^1_\et(V,\Z/2) 
\cong \cO^\times(V)/\cO^{\times2}(V)\oplus {}_2\Pic(V).
\]
If $V$ is projective then $\cO^\times(V)=\R^\times$ and
$\Pic_G(X)\cong\Z/2\oplus {}_2\Pic(V)$.
\end{subremark}

\begin{defn}\label{def:w1}
There is a determinant map $KO_G(X)\smap{\det} \Pic_G(X)$
satisfying $\det(E\oplus F)=\det(E)\otimes\det(F)$;
$\det(L)=[L]$ if $\rank(L)=1$.
Composing $\det$ with the isomorphism of Definition \ref{def:PicG}
defines a surjection $w_1: KO_G(X)\to \H^1_G(X,\Z/2)$,
which we call the {\it discriminant}; we will
relate the discriminant to the Stiefel--Whitney class
$w_1$ in Section \ref{sec:SW} below.
\end{defn}

As in the algebraic setting, the discriminant does not factor 
through $WR(X)$, because the discriminant of $H(1)$ is $-1$.
Instead, the discriminant factors through the subgroup $I(X)$
of forms in $WR(X)$ of even degree, as we shall see in
Theorem \ref{w1:factors} below.

\begin{remark}\label{sign-mod2}
If $E$ is a rank $r$ $G$-bundle on $X$, then $e=[E]-r$ 
has rank~0 in $KO_G(X)$; if $E$ has signature $a_i$ on the
$i^{th}$ component of $X^G$ then the edge map $\eta^0$ of \eqref{eq:rank}
sends $e$ to $\eta^0(e)=(0,a_1-r,...,a_\nu-r)$, which is
an element of $0\times(2\Z)^\nu$\!.
Let $\widetilde{KO}_G(X)$ denote the
kernel of $KO_G(X) \map{\mathrm{rank}} \Z$;
since $\det(e)\!=\!\det(E)$ restricts to a line bundle with sign 
$(-1)^{(a_i-r)/2}$ on the $i^{th}$ component of $X^G\!,$
we have a commutative diagram
and a map from the kernel of $KO_G(X)\map{\eta^0}\Z^{1+\nu}$
onto $\Pic^0_G(X)$. 
\begin{equation*}\xymatrix@R=1.5em{
\ker(\eta^0) \ar[r]\ar[d]^{\textrm{onto}}
 & \widetilde{KO}_G(X) \ar[r]^-{\eta^0} \ar[d]^{\det}
                    & (2\Z)^\nu \ar[d]^{\textrm{mod}\,2}\\
\Pic_G^0(X)\ar[r] &\Pic_G(X) \ar[r]^-{\textrm{sign}} & \{\pm1\}^\nu.
}\end{equation*}
As noted in Example \ref{PicG=0}, the sign map 
$\Pic_G(X)\to\{\pm1\}^\nu$ need not be onto;
it follows that the signature $KO_G(X)\to\Gamma\subseteq\Z^\nu$ 
of \ref{signature} need not be onto either.
\end{remark}

\begin{example}\label{PicG=0}
The $2$-sphere $X=S^{2,1}$ has $\nu=2$ as $X^G$ is two points,
and we saw in Example \ref{WR(2-sphere)} that 
$KO_G(S^{2,1})\cong\Z^2\oplus(\Z/2)$. It follows that
$\Pic_G(X)\cong \H_G^1(X,\Z/2)=\Z/2$. From Remark \ref{sign-mod2},
we see that the signature $KO_G(X)\to\Gamma\subseteq\Z^2$
is not onto.
\end{example}

Given an algebraic surface $V$ defined over $\R$, such that
$V(\R)$ has $\nu$ real components,
let $S_a$ and $S_t$ denote the images of the algebraic signature
$W(V)\!\to\!\Z^\nu$ and the topological signature $WR(V_\top)\to\Z^\nu$,
respectively.  Since the algebraic signature factors through 
the topological one, $S_a\subseteq S_t$.
We do not know any examples where $S_a\ne S_t$.
 
As we noted in Lemma \ref{signature},
$S_t\subseteq\Gamma$. 
Colliot-Th\'el\`ene and Sansuc proved in \cite[4.1]{CTSansuc} 
that $I_3(V)$ maps onto $8\Z^\nu$, i.e, that $S_a$ contains $8\Z^\nu$. 
(See also \cite[proof of Thm.\,2.3.2]{CTParimala}.)
Therefore we have
\[  8\Z^\nu \subseteq S_a\subseteq S_t\subseteq\Gamma\subseteq\Z^\nu. \]
We also know from \cite[8.7]{KSW} that $2S_t\subseteq S_a$.


\begin{proposition}\label{same-sign}
The images of the algebraic and topological signatures 
in $\Z^\nu/4\Z^\nu$ are the same.
\end{proposition}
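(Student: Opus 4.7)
The plan is to show that the signature modulo $4\Z^\nu$ is determined by the rank (mod $2$) and the discriminant at each real point, to reduce the proposition to a comparison of discriminant images, and to invoke Cox's theorem to identify these images. The pointwise computational ingredient is that in $W(\R)/I^2(\R) \cong \Z/4$, the class of a nondegenerate symmetric form of rank $n$ with $q$ negative eigenvalues is $n - 2q$, determined by $n \bmod 2$ together with $\disc = (-1)^{[n/2]+q} \in \{\pm 1\}$.

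Applied at each basepoint $x_i$ of the $\nu$ components of $X^G$ (resp.\,$V(\R)$), this says the $i$th coordinate of $\sigma_t(w) \bmod 4$ (resp.\,$\sigma_a(v) \bmod 4$) is determined by $\rank \bmod 2$ and the sign of the discriminant of the restriction to $x_i$. Writing any $w \in WR(X)$ as $w_0 + r\langle 1\rangle$ with $w_0 \in I(X)$ and $r \in \{0,1\}$, this gives
\[
\sigma_t(w) \bmod 4\Z^\nu \;=\; r(1,\dots,1) + 2\,\vec y(w_0),
\qquad
\vec y(w_0) := \sigma_t(w_0)/2 \bmod 2 \in (\Z/2)^\nu.
\]
Hence $(S_t + 4\Z^\nu)/4\Z^\nu = \{0,(1,\dots,1)\} + 2 D_t$, where $D_t \subseteq (\Z/2)^\nu$ is the image of $\vec y: I(X) \to (\Z/2)^\nu$; similarly $(S_a + 4\Z^\nu)/4\Z^\nu = \{0,(1,\dots,1)\} + 2 D_a$. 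The proposition then reduces to showing $D_a = D_t$.

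For $w_0 \in I$, the entry $y_i$ is the sign at $x_i$ of the discriminant. Algebraically, $D_a$ equals the image of $H^1_\et(V, \Z/2) \to (\Z/2)^\nu$ given by restriction at real points, because the algebraic discriminant $I(V) \to H^1_\et(V, \Z/2)$ is surjective by Example~\ref{disc(theta)}. Topologically, $D_t$ is the image of $\Pic_G(X) \to \{\pm 1\}^\nu$ under the sign map, because $w_1: I(X) \to \Pic_G(X)$ is surjective by the construction in Section~\ref{sec:Pic_G}. Cox's theorem gives $H^1_\et(V, \Z/2) \cong \Pic_G(V_\top)$, and applying its naturality to the inclusion of a real point $\Spec\R \hookrightarrow V$ identifies the two maps to $(\Z/2)^\nu$. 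Hence $D_a = D_t$, and the proposition follows.

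The hard part will be the final compatibility claim: checking that Cox's isomorphism intertwines the algebraic evaluation $H^1_\et(V, \Z/2) \to H^1_\et(\Spec\R, \Z/2)$ at each real point with the topological sign map $\Pic_G(X) \to \Pic_G(\pt) = \{\pm 1\}$ at the corresponding fixed point, with consistent normalizations (recalling that the topological $w_1$ is twisted by $(-1)^{[n/2]}$ relative to the algebraic discriminant, which is precisely why we work on $I$ rather than all of $WR$).
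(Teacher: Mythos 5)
Your proof is correct and follows essentially the same route as the paper's: both reduce to the ideal $I$ of even-rank forms, observe that the signature modulo $4\Z^\nu$ on $I$ factors as the (surjective) discriminant followed by the sign map, and use Cox's isomorphism $H^1_\et(V,\Z/2)\cong\H^1_G(V_\top,\Z/2)\cong\Pic_G(V_\top)$ to identify the two sign maps. The compatibility you flag as the remaining ``hard part'' is precisely the commutative diagram the paper writes down, which rests on Theorem \ref{w1:factors} (the topological $w_1$ computes the algebraic discriminant).
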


\begin{proof}
Set $X=V_\top$; we may suppose $X$ connected. Since both
$W(V)/I(V)$ and $WR(X)/I(X)$
map isomorphically to $\Gamma/2\Z^\nu\cong\Z/2$, the
signature maps $I(V)$ and $I(X)$ to $2\Z^\nu$.
We need to consider their images modulo $4\Z^\nu$.


Let us write $\{\pm1\}^\nu$ for $(2\Z/4\Z)^\nu$.
By Remark \ref{sign-mod2}, the reduction modulo~2 of the 
topological signature, i.e., 
$I(X) \to (2\Z)^\nu \to \{\pm1\}^\nu$,
factors as the discriminant $I(X) \to\Pic_G(X)$,
which is onto, followed by the sign map $\Pic_G(X)\to \{\pm1\}^\nu$.
The same is true of the algebraic signature.
Hence we have a commutative diagram:
\begin{equation*}\xymatrix@R=1.5em{
I(V) \ar[d]\ar[r]^-{\onto} & H^1_\et(V,\Z/2)\ar[d]^{\cong}\ar[r]^-{\textrm{sign}} 
                                    & \{\pm1\}^\nu\ar[d]^{=}\\
I(X)\ar[r]^-{\onto} &\H_G^1(X,\Z/2)\ar[r]^-{\textrm{sign}} & \{\pm1\}^\nu.
} \end{equation*}
By inspection, the images of $I(V)$ and $I(X)$ in $\{\pm1\}^\nu$ agree.
\end{proof}

\begin{subremark}
Here are two consequences of Proposition \ref{same-sign}:\\
i) If the signature $WR(X)\to\Gamma$ is onto, so is the signature
$W(V)\!\to\Gamma$. \\
ii) If $4\Z^\nu\subseteq S_a$ then $S_a=S_t$.
\end{subremark}

\goodbreak 
\section{$WR$ is a birational invariant}\label{sec:birational}

Let $V, V'$ be smooth projective surfaces defined over $\R$.
We say that $V$ and $V'$ are {\it (real) birationally equivalent} if
there exists a chain of blow-ups at smooth closed points
(and their inverses, blow-downs) connecting them.
This is equivalent to the assertion that their function
fields are isomorphic (see \cite[II.6.4]{Silhol}).
By a {\it birational invariant} of smooth projective surfaces over $\R$ 
we mean a function that sends real birationally equivalent surfaces 
to isomorphic objects. For example, the number of real components
is a birational invariant.

\begin{theorem}\label{birational}
$WR$ is a birational invariant of smooth projective surfaces over $\R$.
\end{theorem}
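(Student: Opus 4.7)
The strategy is to reduce to a single blow-up at a smooth closed point and then apply a Mayer--Vietoris comparison argument. By the definition of real birational equivalence above, any such equivalence is a finite composition of blow-ups at smooth closed points and their inverses, so it suffices to prove that if $\pi : V' \to V$ is the blow-up at a single smooth closed point $p$, then $\pi^* : WR(V_\top) \to WR(V'_\top)$ is an isomorphism. Such a $p$ is either a real point (residue field $\R$), in which case $E_\top := \pi^{-1}(p)_\top \cong \C\bP^1 = S^{1,2}$, or a complex-conjugate pair (residue field $\C$), in which case $E_\top \cong G \times \C\bP^1$.

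Let $U := V_\top \setminus \{p\}_\top$, which via $\pi$ is also $V'_\top \setminus E_\top$. A small equivariant tubular neighborhood $N$ of $\{p\}_\top$ in $V_\top$ equivariantly retracts to $\{p\}_\top$, while its preimage $N'$ in $V'_\top$ equivariantly retracts to $E_\top$; both $N$ and $N'$ share a common equivariant boundary sphere $S$ (the unit sphere in $T_pV$, which is $S^{2,2}$ in the real case, or two copies of $S^3$ swapped by $G$ in the complex case). The Mayer--Vietoris long exact sequences for $KR$ and $KO_G \cong GR$ (both Bredon cohomology theories, by the Appendix) combine, using the hyperbolic map of spectra $KR \to GR$, to yield a Mayer--Vietoris long exact sequence for $WR$; comparing those for $V_\top$ and $V'_\top$ produces a commutative ladder in which the $WR(U)$ and $WR(S)$ columns agree on both sides.

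The essential step is to verify that the pullback $WR(\{p\}_\top) \to WR(E_\top)$ induced by $E_\top \to \{p\}_\top$ is an isomorphism. In the real case, $WR(\C\bP^1) = \Z = WR(\pt)$ by Examples \ref{trivial action} and \ref{WR(2-sphere)}, and the pullback sends the rank-one trivial form to the rank-one trivial form. In the complex case, $WR(G \times \C\bP^1) = \Z/2 = WR(G)$ by Lemma \ref{Arason} (applied with $d=1$ and $d=0$ respectively), and again the pullback maps generator to generator. Together with the analogous isomorphisms in the degrees adjacent to $WR^0$ in the spectrum-level Mayer--Vietoris sequence, the 5-lemma gives $\pi^* : WR(V_\top) \cong WR(V'_\top)$.

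The main obstacle is making the Mayer--Vietoris long exact sequence for $WR$ rigorous: since $WR$ is defined as a cokernel rather than as a single functor in a cohomology theory, one must either work at the spectrum level (identifying $WR$ with $\pi_0$ of the cofiber of the hyperbolic map $KR \to GR$, so that $WR$ inherits the Bredon axioms from $KR$ and $GR$) or iterate the snake lemma on the $KR$ and $KO_G$ long exact sequences to obtain the needed sequence in $WR$. A minor secondary point is that the identification of $U$ as an open subset of both $V_\top$ and $V'_\top$, and of the boundary sphere $S$ in both tubular neighborhoods, is $G$-equivariant; this is automatic because $\pi$ restricts to an equivariant isomorphism off $E$.
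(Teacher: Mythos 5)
Your reduction to a single blow-up and your topological picture of the blow-up (excising an equivariant disk and gluing in a tubular neighborhood of the exceptional $\C\bP^1$) match the paper, which proves exactly the statement that blow-up at a smooth closed point induces an isomorphism on $WR$ (Theorem \ref{blowup}) and deduces Theorem \ref{birational} from it. The gap is in how you pass from $KR$ and $GR\cong KO_G$ to $WR$. The group $WR(X)$ is the cokernel of the hyperbolic map $H\colon KR(X)\to GR(X)$, equivalently the image of the boundary map $GR(X)\smap{\partial}\uu{-1}(X)$ in the Bott sequence; it is not a term of a cohomology theory, so it has no Mayer--Vietoris long exact sequence and the 5-lemma cannot be applied to it. Your proposed fix via ``$\pi_0$ of the cofiber of $KR\to GR$'' does not produce $WR$: the long exact sequence of that cofiber exhibits its $\pi_0$ as an extension of $\ker(KR_{-1}\to GR_{-1})$ by $WR(X)$, and likewise iterating the snake lemma on the two Mayer--Vietoris sequences yields, in each degree, a six-term sequence mixing cokernels and kernels of $H$, not a Mayer--Vietoris sequence in $WR$ alone.

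Because of this, knowing $WR(\pt)\cong WR(\C\bP^1)$ and $WR(G)\cong WR(G\times\C\bP^1)$ in degree $0$ --- and even in the adjacent degrees, which you assert but do not compute --- is not sufficient. The paper instead assembles the Mayer--Vietoris sequences of the three genuine cohomology theories $KR$, $GR$ and $\uu{*}$ into a commutative diagram whose columns are the Bott sequences, and chases it. Surjectivity of $WR(X)\to WR(X')$ uses that $KR(X')\to KR(\bP,\pt)$ is onto (the Real bundle attached to $\cO(1)$ on $V'$ lifts the generator) while $KR(\bP,\pt)\to KR_{-1}(X)$ is zero; injectivity requires the further computation that $\tau\colon KR(\bP,\pt)\to\uu{0}(\bP,\pt)$ is an isomorphism (since $F\circ\tau$ sends $[E]$ to $[E]-[E^*]=2[E]$ on the reduced group), which forces the connecting map $\sigma\colon\uu{0}(\bP,\pt)\to\uu{-1}(X)$ to vanish. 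This last step has no counterpart in your sketch and is where the real content of the proof lies.
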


\begin{proof}
Since every birational equivalence between smooth projective surfaces
is a composition of blow-ups and
blow-downs at smooth closed points, the result is immediate
from Theorem \ref{blowup} below.
\end{proof}

\begin{subremark}
We recall how to 
topologically construct the blow-up $V'\!\to V$ of a complex algebraic 
surface $V$ at a smooth point $x$ of $V$. Let $D^4$ denote a 
small 4-disk about $x$; the inverse image of $D^4$ in $V'_\top$
is a tubular neighborhood $T$ of the exceptional fiber $\bP^1_\top\cong S^2$,
and the boundary of $T$ is the same as the boundary $S^3$ of $D^4$.
($T$ has Euler class 1.)
Thus $V'_\top$ is the union of $V_\top-D^4$ and $T$ along $S^3$.
\end{subremark}


\begin{theorem}\label{blowup}
Let $V$ be a smooth projective variety 
defined over $\R$, and $V'\to V$ the blowup at a 
smooth closed point $p$. 
Then
$WR(V_\top)\to WR(V'_\top)$ is an isomorphism.
\end{theorem}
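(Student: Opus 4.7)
The plan is to exploit the topological gluing of $V'_\top$ described in the preceding remark. Setting $U = V_\top \setminus \mathring D^4$, both ambient spaces are $G$-equivariant pushouts over the common boundary $S^3$,
\[
V_\top \;=\; U \cup_{S^3} D^4, \qquad V'_\top \;=\; U \cup_{S^3} T,
\]
and the blow-down $\pi\colon V'_\top \to V_\top$ is the identity on $U$ and restricts to the collapse $\tau\colon T \to D^4$ on the tubular neighbourhoods. First I would invoke Bredon's axioms (homotopy invariance, excision, long exact sequences) recalled in the Appendix to write down Mayer--Vietoris long exact sequences for $KR$ and $KO_G$ for both pushouts, and then apply the snake lemma to the hyperbolic transformation $KR \to KO_G$ to derive a Mayer--Vietoris long exact sequence for $WR^*$ on each side. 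The blow-down $\pi$ induces a morphism between the two $WR^*$ sequences that is the identity on every term coming from $U$ or from $S^3$.

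By the five-lemma, the proof then reduces to showing that the map $\tau^*\colon WR^*(D^4) \to WR^*(T)$ is an isomorphism, compatibly with restriction to $S^3$. Since $D^4$ retracts $G$-equivariantly to $\{p\}$ and $T$ retracts to the exceptional divisor $E$, this is a projective-line invariance statement: one must show that the structure map $E \to \{p\}$ induces an isomorphism $WR^*(\{p\}) \cong WR^*(E)$. When $p$ is a real point of $V$, so that $E_\top = S^{1,2}$, Example \ref{WR(2-sphere)} records this in degree~$0$; when $p$ is a pair of conjugate complex points, so that $E_\top = G \times \C\bP^1$, the degree-$0$ version is Lemma \ref{Arason}. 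The higher-degree cases rest on the same inputs, namely the Karoubi--Mudrinski surjectivity $KU(\C\bP^1,\pt) \to KO(\C\bP^1,\pt)$ in the complex-point case and the analogous equivariant computation of $KO_G$ and $KR$ of $S^{1,2}$ in the real-point case.

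The main obstacle is the projective-line invariance of $WR^*$ in all the degrees needed for the five-lemma: $KR$ and $KO_G$ are not separately $\bP^1$-invariant (for instance, $KR(\C\bP^1)$ has rank~$2$ while $KR(\pt) = \Z$), so one has to show that the ``extra'' summands in $KR^*(E_\top)$ and $KO_G^*(E_\top)$ match under the hyperbolic map and thereby cancel in the cokernel $WR^*$. Once this cancellation is verified, and its compatibility with restriction to the linking sphere $S^3$ is checked, the five-lemma comparison of the two Mayer--Vietoris sequences completes the proof.
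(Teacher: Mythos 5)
The decisive step in your outline --- ``apply the snake lemma to the hyperbolic transformation $KR\to KO_G$ to derive a Mayer--Vietoris long exact sequence for $WR^*$'' --- does not work. $WR$ is defined as the cokernel of $H:KR\to KO_G$, and the cokernels of a map between two long exact (Mayer--Vietoris) sequences do not form an exact sequence; the snake lemma only applies to short exact sequences of complexes, and $H$ is neither injective nor surjective in general. So there is no $WR^*$ Mayer--Vietoris sequence to feed into the five lemma, and the whole reduction to ``$\bP^1$-invariance of $WR^*$ in all degrees'' collapses. Indeed that invariance is itself doubtful: with the naive cokernel definition in the conjugate-point case one has $\coker\bigl(KU^1(S^2)\to KO^1(S^2)\bigr)\cong\Z/2$ while the corresponding group for a point is $0$, so the hoped-for cancellation of the ``extra'' summands under the hyperbolic map is not automatic and fails in some degrees. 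The degree-$0$ facts you quote (Lemma \ref{Arason}, Example \ref{WR(2-sphere)}) are correct but are not, by themselves, enough to run a five-lemma argument.

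The paper's proof of Theorem \ref{blowup} is built precisely to get around this: it compares the Mayer--Vietoris sequences of the genuine cohomology theories $KR$, $GR\cong KO_G$ and the shifted Grothendieck--Witt theory $\uu{n}$, tied together by the exact Bott sequences as the columns of diagram \eqref{Bott}, and identifies $WR(X)$ as the image of the boundary map $\partial:GR(X)\to\uu{-1}(X)$; the isomorphism then follows from a diagram chase. Crucially, that chase needs a global input with no counterpart in your purely local comparison of $D^4$ with $T$: the Real line bundle associated to $\cO(1)$ on $V'$ restricts to the canonical generator on the exceptional $\bP^1$, so $KR(X')\to KR(\bP,\pt)$ is onto and the offending boundary maps (the map $\sigma$ in \eqref{Bott}) vanish; injectivity further uses the analysis of $\tau:KR(\bP,\pt)\to\uu{0}(\bP,\pt)$ via $F\circ\tau=1-(\;)^*$, i.e.\ multiplication by~$2$. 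If you want to salvage your approach, you must either work throughout with theories that genuinely have long exact sequences (as the paper does with $GR^{[n]}$) or otherwise control the boundary maps --- and at that point the lifting of $\cO(1)$ becomes unavoidable.
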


\begin{proof}
\addtocounter{equation}{-1}
\begin{subequations}
\renewcommand{\theequation}{\theparentequation.\arabic{equation}}
Set $X=V_\top$, $X'=V'_\top$.
There are two cases; we may blow up either a real point of $V$
(a fixed point $x$ of $X$), or a complex point of $V$
(a conjugate pair of points on $X$).

\smallskip
\paragraph{\it Fixed points.}
Consider first the case where $V'$ is the blow-up of $V$ 
at an $\R$-point. 
That is, $X'$ is obtained by removing an equivariant disk $D^4$ in $X$
about the point and replacing it with 
a tubular neighborhood $T$ of $S^2$. For any equivariant cohomology
theory $H^*$, the excision $H^*(X,D^4)\cong H^*(X',T)$
yields a Mayer-Vietoris sequence. 

Writing $\bP$ for the $G$-space $\C\bP^1$, 
we have a commutative diagram, whose 
rows are Mayer--Vietoris sequences and whose columns are 
the exact Bott sequences (see \cite[Thm.\,6.1]{Schlichting.Fund}
and \cite[(1.5)]{KSW}).
\begin{equation}\label{Bott}
\xymatrix@R=1.5em{
KR_1(\bP,\pt) \ar[r]\ar[d]^{\cong} & KR(X) \ar[r]\ar[d]^{H}
  & KR(X') \ar[r]^-{\mathrm{onto}}\ar[d]^{H} & KR(\bP,\pt)=\Z \ar[d]^{\mathrm{onto}}
\\
GR_1(\bP,\pt) \ar[r]\ar[d]^{0} & GR(X) \ar[r]\ar[d]^{\partial}
  & GR(X') \ar[r]\ar[d]^{\partial} & GR(\bP,\pt)=\Z/2 \ar[d] \\
\uu{0}(\bP,\pt) \ar[r]^{\sigma}\ar[d]^2 & \uu{-1}(X) \ar[r]^u \ar[d]^F
  & \uu{-1}(X') \ar[r]\ar[d]^F & \uu{-1}(\bP,\pt)=0 \ar[d] \\
KR(\bP,\pt) \ar[r]^0\ar@<2ex>[u]^{\tau} & KR_{-1}(X) \ar[r]\ar@<2ex>[u]^{\tau}
  & KR_{-1}(X') \ar[r]\ar@<2ex>[u]^{\tau}  & KR_{-1}(\bP,\pt)=0
}\end{equation}
The Bott sequences in the first and last columns are well known, with 
$KR_1(\bP,\pt)\cong\Z/2$ mapping onto $GR_1(\bP,\pt)\cong\Z/2$,
$KR(\bP,\pt)\cong\Z$ mapping onto $GR_1(\bP,\pt)\cong\Z/2$,
and $\uu{0}(\bP,\pt)=\Z$ mapping to $KR(\bP,\pt)=\Z$ by multiplication by 2.

Since the canonical bundle on $\bP$ lifts to the Real bundle on $X'$
associated to the invertible sheaf $\cO(1)$ on $V'$ \cite[II.7]{Hart}, 
the top right horizontal map is onto, 
and the bottom left horizontal map is zero.
It follows from a diagram chase that the map $WR(X)\to WR(X')$ is onto.
To see that it is an injection, we will show that the map 
$\sigma$ is zero, and hence that the map $u$
is an injection; this suffices, since $WR(X)$ and $WR(X')$ 
are the images of the vertical maps $\partial$ in 
$\uu{-1}(X)$ and $\uu{-1}(X')$.

The map $\tau:KR_n(X)\to\uu{n}(X)$ in \eqref{Bott} is defined as the
composition of the map $KR_n(X)\to {}_{-}GR_n^{[1]}(X)$ in the Bott sequence
with the isomorphism ${}_{-}GR_n^{[1]}\cong\uu{n}$ of the Fundamental
Theorem; see \cite[6.2]{Schlichting.Fund}.  Identifying 
$\uu{0}(\bP,\pt)$ with the relative group associated to
the hyperbolic functor, $\tau$ sends the class of
a Real bundle $E$ to the hyperbolic bundle $E\oplus E^*$, 
equipped with its two Lagrangians $E\oplus0$ and $0\oplus E^*$. 
The composition $F\circ\tau$ sends $[E]$ to $[E]-[E^*]$.
%
Since $[E^*]=-[E]$ in $KR(\bP,\pt)$, 
the composition $F\circ\tau$ on 
the lower left group $KR(\bP,\pt)\cong\Z$ is multiplication by~2;
it follows that $\tau:KR(\bP,\pt)\to \uu{0}(\bP)$ is an isomorphism.
\end{subequations}

\smallskip 
\paragraph{\it Conjugate points}
We now consider the case where $X'$ is the blow-up of $X$ at 
a conjugate pair of points.  In this case, we have a diagram
like \eqref{Bott}, with $(\bP,\pt)$ replaced by $(G\times\bP,G\times\pt)$.
As in \cite[2.4(b) and 2.6]{KSW} (cf.\ Lemma \ref{Arason} above),
$\uu{0}(G\times\bP,G)\cong KO^{-2}(S^2,\pt)$ and
$KR(G\times\bP,G)\cong KU(S^2,\pt)$.
Thus the bottom left and upper right vertical maps are
the familiar maps:
\begin{align*}
\Z=KO^{-2}(S^2,\pt) &\map{2} KU(S^2,\pt)=\Z, \mathrm{~and}\\
\Z=KU(S^2,\pt) &\map{\mathrm{onto}} KO(S^2,\pt)=\Z/2.
\end{align*}
Again, the Real bundle on $X'$ associated to $\cO(1)$ maps to the
canonical complex bundle generating $KU(S^2,\pt)$ so, 
as in \eqref{Bott}, the upper right horizontal map is onto
and the lower left horizontal map is zero.
Again, as in diagram \eqref{Bott}, this implies that 
the map $\sigma$ is zero and hence that $u$ is an injection.
(In fact, $u$ is an isomorphism as $KO^1(S^2,\pt)=0$.)
As before, this suffices to show that $WR(X)\to WR(X')$ 
is an isomorphism.
\end{proof}

\goodbreak
\section{Stiefel--Whitney classes on $KO_G$}\label{sec:SW}


\noindent
If $X$ is a compact $G$-space, let us write $\XG$ for $X\times_GEG$,
so that the (Borel) equivariant cohomology group $H^n_G(X,\Z/2)$
is defined to be $H^n(\XG,\Z/2)$.
Following \cite{AS}, we can identify vector bundles on $\XG$
with equivariant vector bundles on $X\!\times\!EG$;
the pullback of a bundle on $\XG$ is an equivariant vector bundle on
$X\times EG$. As in \cite{AS}, we define $KU(\XG)$ and $KO(\XG)$ to be 
representable $K$-theory:  $KU(\XG)=[\XG,KU]$ and $KO(\XG)=[\XG,KO]$.

The map $X\times EG\to X$ induces natural maps
$KU_G(X)\to KU(\XG)$ and $KO_G(X)\to KO(\XG)$, 
called {\it Atiyah-Segal maps},
since they were first studied by Atiyah and Segal in \cite{AS}.
Atiyah and Segal showed in \cite{AS} that $KO(\XG)$ 
is the completion of $KO_G(X)$ (for finite CW complexes $X$) with
respect to the augmentation ideal of $KO_G(\pt)\cong RO(G)$.

\begin{defn}\label{def:w_n}
The maps $w_n:KO_G(X)\to \H^n_G(X,\Z/2)$, obtained by
composing the Atiyah--Segal map with the usual Stiefel--Whitney classes
on $X_G$, are called the equivariant Stiefel--Whitney classes.
\end{defn}

The equivariant Stiefel--Whitney class $w_1$ agrees on $KO_G(X)$ with
the discriminant $w_1$ defined in Definition \ref{def:w1}.
To see this, note that $\Pic_G(X)\cong\Pic_G(X\!\times\! EG)$, and
elements of $\Pic_G(X\!\times\! EG)$ correspond to
rank~1 $\R$-linear vector bundles on $\XG$, which are classified
by the group $H^1(\XG,\Z/2)$; see \cite[I.4.11]{WK}.

The equivariant Stiefel--Whitney class $w_1$ does not factor
through $WR(X)$, as the example $X=\pt$ shows 
(see Example \ref{trivial action}). 
However, we showed in \cite[5.2]{RGB} that
the restriction of $w_1$ to $I(X)$, the kernel of 
$KO_G(X)\to\H_G^0(X,\Z/2)$, does induce a map $I(X)\to\H_G^1(X,\Z/2)$,
which we shall call $w_1$ by abuse of notation.
The following result was proven in \cite[Thm.\,5.3]{RGB}.

\begin{theorem}\label{w1:factors}
The algebraic discriminant of a smooth variety $V$ factors as
\[
I(V) \to I(V_\top)\ \map{w_1}\ \H^1_G(V_\top,\Z/2)\cong H^1_\et(V,\Z/2).
\]
\end{theorem}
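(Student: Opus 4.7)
The strategy is to show that both maps factor through the common quotient $I(V)/I_2(V)$, and then to check agreement on a set of generators whose discriminants span $H^1_\et(V,\Z/2)$.

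First I would note that both maps are additive homomorphisms $I(V) \to \H^1_G(V_\top,\Z/2) \cong H^1_\et(V,\Z/2)$. The algebraic discriminant factors through $I(V)/I_2(V)$ by Definition \ref{def:disc}, and Lemma \ref{In/In+1} identifies this quotient with $H^1_\et(V,\Z/2)$. On the topological side, the product formula $w_1(E \otimes F) = \rank(F)\,w_1(E) + \rank(E)\,w_1(F)$ for Stiefel--Whitney classes, together with additivity on sums, implies that $w_1$ vanishes on $I(V_\top)^2$ by a standard calculation on rank-zero virtual bundles. Hence the composite $I(V) \to I(V_\top) \to \H^1_G(V_\top,\Z/2)$ also factors through $I(V)/I_2(V)$.

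Second, by Example \ref{disc(theta)} and the split Kummer sequence
\[ 0 \to \cO^\times(V)/2 \to H^1_\et(V,\Z/2) \to {_2}\Pic(V) \to 0, \]
the quotient $I(V)/I_2(V) \cong H^1_\et(V,\Z/2)$ is generated by the classes of two families of forms: (a) $(\cO_V,a) - (\cO_V,1)$ for $a \in \cO^\times(V)$, with discriminant the class of $a$; and (b) $(L,\theta) - (\cO_V,1)$ for a 2-torsion line bundle $L$ with symmetric isomorphism $\theta: L \cong L^*$, with discriminant $[L] \in {_2}\Pic(V)$. I would verify agreement on each family directly. For (a), the Real form $(\cO_{V_\top},a)$ corresponds under $GR(V_\top) \cong KO_G(V_\top)$ of \cite[2.2]{KSW} to a rank-1 $G$-bundle $L_a$ constructed fiberwise as in Example \ref{WR(T)}(iii); one checks that its class in $\Pic_G(V_\top) \cong \H^1_G(V_\top,\Z/2)$ matches the image of $a$ under Cox composed with the Kummer inclusion. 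For (b), $(L,\theta)$ yields a rank-1 equivariant bundle on $V_\top$ whose class in $\Pic_G(V_\top)$ corresponds under Cox to $[L] \in {_2}\Pic(V)$.

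The main obstacle will be verifying the compatibility of the natural isomorphism $\Pic_G(V_\top) \cong \H^1_G(V_\top,\Z/2)$ of Definition \ref{def:PicG} (used in Definition \ref{def:w1}) with Cox's \'etale-to-Borel isomorphism, on these explicit Kummer generators. Concretely, one must trace how the \'etale $\mu_2$-torsor $\Spec\,\cO_V[t]/(t^2-a)$, viewed as an equivariant double cover of $V_\top$, classifies the $G$-line bundle $L_a$ arising from \cite[2.2]{KSW}, and similarly how the $\mu_2$-torsor of square roots of the trivialization of $L^{\otimes 2}$ recovers the $G$-bundle associated to $(L,\theta)$. Once agreement is established on generators of $I(V)/I_2(V)$, the factorizations of the first paragraph force equality of the two maps on all of $I(V)$.
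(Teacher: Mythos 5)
The step that fails is your deduction that the topological composite factors through $I(V)/I_2(V)$. What the Whitney/determinant computation on rank-zero virtual classes actually shows is that $w_1$ kills the image of $I(V)^2$, the square of the ideal of globally defined even-rank forms; but $I_2(V)$ is by definition $W(V)\cap I^2(F)$, and only the inclusion $I^n(V)\subseteq I_n(V)$ is available (see Proposition \ref{exp.2^d}). An element of $I_2(V)$ is an even-rank form on a possibly nontrivial bundle which becomes congruent to a sum of products only over the generic point, so the product formula says nothing about it; the gap between unramified classes and globally decomposable ones is exactly the sort of delicacy flagged in Remark \ref{I_2/I_3}. This matters because your generators only span $I(V)$ modulo $I_2(V)$: writing $x=g+y$ with $g$ a sum of your forms (a), (b) and $y\in I_2(V)$, agreement on generators gives $w_1(x_\top)=\disc(x)+w_1(y_\top)$, with no control over $w_1(y_\top)$. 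Vanishing of $w_1$ on $I_2(V)$ is, given the rest of your argument and Lemma \ref{In/In+1}, essentially equivalent to the theorem itself, so it cannot be assumed; nor can you replace $I(V)/I_2(V)$ by $I(V)/I(V)^2$, since your generators are only known to span the former.

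For comparison, the paper does not reprove this statement at all: it quotes \cite[Thm.\,5.3]{RGB}. A repair of your approach is to drop the filtration reduction entirely and compare the two invariants on every even-rank form $(E,\theta)$ directly: the algebraic discriminant of a rank-$2m$ form is the class of the rank-one form $(\det E,\det\theta)$ twisted by $(-1)^m$, while the topological $w_1$ of its class in $I(V_\top)$ is, by Definitions \ref{def:PicG} and \ref{def:w1}, the class in $\Pic_G(V_\top)\cong\H^1_G(V_\top,\Z/2)$ of the determinant of the associated $G$-bundle (with the corresponding twist coming from the conventions of \cite[Thm.\,2.2]{KSW}). The theorem then reduces precisely to the compatibility you defer to your final paragraph --- that Cox's isomorphism matches the \'etale class of a rank-one symmetric form with the $\Pic_G$-class of its associated $G$-line bundle --- but applied to determinant forms of arbitrary bundles rather than only to the Kummer generators $(\cO_V,a)$ and $(L,\theta)$. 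That torsor comparison is the real content and must actually be carried out; once it is, no generating set and no knowledge of $I_2(V)$ versus $I(V)^2$ is needed.
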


Write $I_2(X)$ for the kernel of $w_1:I(X)\to \H^1_G(X,\Z/2)$.

\begin{corollary}\label{iso-modI2}
$W(V)/I_2(V)\smap{\cong} WR(X)/I_2(X)$, where $X\!=\!V_\top$.
\end{corollary}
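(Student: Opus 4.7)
The plan is to prove the isomorphism by comparing the natural two-step filtrations $W(V)\supseteq I(V)\supseteq I_2(V)$ and $WR(X)\supseteq I(X)\supseteq I_2(X)$, showing that $W(V)\to WR(X)$ induces an isomorphism on each associated graded piece, and then concluding by the five lemma. At the top, both $W(V)/I(V)$ and $WR(X)/I(X)$ are canonically $\Z/2$ via rank mod~$2$, and $W(V)\to WR(X)$ preserves rank, so this step is automatic.

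For the middle piece $I/I_2$, Theorem \ref{w1:factors} supplies the commutative square
\[
\begin{CD}
I(V) @>>> I(X) \\
@VV{\disc}V @VV{w_1}V \\
H^1_\et(V,\Z/2) @>{\cong}>> \H^1_G(X,\Z/2),
\end{CD}
\]
whose bottom horizontal arrow is Cox's isomorphism. The left vertical has kernel exactly $I_2(V)$ by Lemma \ref{In/In+1} and is surjective by Example \ref{disc(theta)}, so it descends to an isomorphism $I(V)/I_2(V)\smap{\cong}H^1_\et(V,\Z/2)$. The right vertical has kernel $I_2(X)$ by the very definition of $I_2(X)$; for surjectivity I would use the identification $\Pic_G(X)\cong\H^1_G(X,\Z/2)$ from Definition \ref{def:PicG}. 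Any class in the target corresponds to a real $G$-line bundle $L$ with $L\otimes L$ canonically trivial, and this trivialization is a symmetric form $\theta$ on $L$ with $w_1(L,\theta)=[L]$; then $(L,\theta)-1\in I(X)$ realizes the prescribed discriminant. This is the topological analogue of Example \ref{disc(theta)} and is the only non-formal ingredient in the argument. Commutativity of the square then forces the induced map $I(V)/I_2(V)\to I(X)/I_2(X)$ to be an isomorphism.

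The corollary finally follows from the five lemma applied to the short exact sequences
\[
\begin{CD}
0 @>>> I(V)/I_2(V) @>>> W(V)/I_2(V) @>>> \Z/2 @>>> 0 \\
@. @VV{\cong}V @VVV @VV{=}V @. \\
0 @>>> I(X)/I_2(X) @>>> WR(X)/I_2(X) @>>> \Z/2 @>>> 0.
\end{CD}
\]
The main obstacle is verifying surjectivity of $w_1:I(X)\to\H^1_G(X,\Z/2)$; once that is in hand, everything else is formal given Theorem \ref{w1:factors} and the isomorphisms of Lemma \ref{In/In+1}.
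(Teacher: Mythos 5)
Your proof is correct and takes essentially the same route as the paper: both arguments rest on Theorem \ref{w1:factors} together with the discriminant isomorphism of Lemma \ref{In/In+1} (via Cox) to identify $I(V)/I_2(V)$ with $I(X)/I_2(X)$, and then treat the rank-mod-2 extension routinely. The only cosmetic difference is that you verify surjectivity of $w_1\colon I(X)\to\H^1_G(X,\Z/2)$ directly through $\Pic_G(X)$, whereas the paper gets this for free: the surjective algebraic discriminant factors through the injection $I(X)/I_2(X)\to\H^1_G(X,\Z/2)$ (injective by the very definition of $I_2(X)$), which forces both maps in the factorization to be isomorphisms.
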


\begin{proof}
By Definition \ref{def:w1} and Theorem \ref{w1:factors},
the isomorphism of Lemma \ref{In/In+1} is the composition
\[
I(V)/I_2(V)\!\to\!I(X)/I_2(X)\,\map{\cong}\H^1_G(X,\Z/2).
\]
The extension from $I(V)\cong I(X)$ to $W(V)\cong W(X)$ is routine.
\end{proof}

As noted in Definition \ref{def:w1},
the rank and $w_1$ give a ring homomorphism
$KO_G(X)\to\Z\times \H^1_G(X,\Z/2)$.  It is onto, because $\Pic_G^0(X)$
maps onto $\H^1_G(X,\Z/2)\cong\! H^1(X/G,\Z/2)$ (see \ref{def:PicG}).

\begin{lemma}\label{Z/4} 
Suppose that $G$ acts freely on a connected space $X$.
Then the rank and $w_1$ induce a non-trivial extension
\[ 
0 \to H^1(X/G,\Z/2) \to WR(X)/I_2(X)\ \map{\mathrm{rank}}\ \Z/2\to 0.
\]
This is the extension 
such that $2\cdot1$ is $[-1]$, where '1' is the image of 
the unit of the ring $WR(X)$, and 
$[-1]\in H^1(X/G,\Z/2)\cong\Pic_G^0(X)$ is the class of 
the line bundle $X\times\R(1)$.
\end{lemma}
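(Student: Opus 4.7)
The plan is to build the short exact sequence from the rank-mod-$2$ map and the discriminant $w_1: I(X) \to \H^1_G(X,\Z/2)$ (which is well-defined by the discussion preceding Theorem \ref{w1:factors}), and then identify the extension class by computing $w_1(\langle 1,1\rangle)$.

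Since $G$ acts freely on the connected space $X$, we have $X^G = \emptyset$ and $\nu = 0$. From Example \ref{Borel} this gives $\H^0_G(X,\Z/2) \cong H^0(X/G,\Z/2) \cong \Z/2$ and $\H^1_G(X,\Z/2) \cong H^1(X/G,\Z/2)$. Hyperbolic forms have even rank, so reduction of rank modulo~$2$ descends from $KO_G(X)$ to a surjection $WR(X) \to \Z/2$ whose kernel is $I(X)$ by definition, producing the short exact sequence $0 \to I(X) \to WR(X) \to \Z/2 \to 0$.

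Next I will check that $w_1: I(X) \to H^1(X/G,\Z/2)$ is surjective; dividing by $I_2(X) = \ker(w_1)$ will then give the displayed sequence. Given $\alpha \in H^1(X/G,\Z/2) \cong \Pic^0_G(X)$, realize $\alpha$ by a $G$-line bundle $L$ with trivial sign. Then $L \otimes L$ is trivial in $\Pic_G(X)$, so there is a symmetric isomorphism $\theta: L \to L^*$. The class $[(L,\theta)] - [(\R,\mathrm{id})] \in I(X)$ has discriminant $(-1)^{[1/2]}\det(L) = L = \alpha$, proving surjectivity.

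To pin down the extension, the unit $1 = [(\R,\mathrm{id})]$ has rank~$1$, so $2 \cdot 1 = \langle 1,1\rangle \in I(X)$. By the normalized discriminant formula of Example \ref{disc(theta)}, transported to the topological setting via Theorem \ref{w1:factors},
\[
 w_1(\langle 1,1\rangle) = (-1)^{[2/2]} \cdot 1 \cdot 1 = -1,
\]
which is the class $[-1] \in \Pic^0_G(X) \cong H^1(X/G,\Z/2)$ represented by $X \times \R(1)$. Remark \ref{Xcovers} ensures $[-1]$ is nonzero, because $X \to X/G$ is a nontrivial double cover. Hence $2 \cdot 1$ is a nonzero element of $H^1(X/G,\Z/2) \subset WR(X)/I_2(X)$, so the extension is non-split. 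The main subtlety requiring care is the $(-1)^{[n/2]}$ normalization in the discriminant: without it, the apparent $w_1$ of the hyperbolic $H(1) \cong \R \oplus \R(1)$ would be nonzero so $w_1$ would fail to descend from $KO_G(X)$ to $I(X) \subset WR(X)$, and one would simultaneously lose the computation $w_1(\langle 1,1\rangle) = [-1]$ that forces the extension to be nontrivial.
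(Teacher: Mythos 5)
Your proof is correct, but it takes a genuinely different route from the paper's. The paper argues through the Bredon filtration: since $X^G=\emptyset$ forces $H^1_G(X;KR^{-1})=0$, one has $KR(X)/F_2\cong\Z$ and $KO_G(X)/F_2\cong\Z\times\H^1_G(X,\Z/2)$ via (rank, $w_1$), and the trivial Real line bundle maps to $(2,[-1])$ (Remark \ref{H(1)}); the lemma is then read off as the cokernel of $\Z\xrightarrow{(2,[-1])}\Z\times H^1(X/G,\Z/2)$. You instead avoid the spectral sequence \eqref{Bredon-ss} entirely: you get the quotient $\Z/2$ from rank mod~$2$, get the kernel by proving $w_1:I(X)\to\H^1_G(X,\Z/2)\cong H^1(X/G,\Z/2)$ is onto using the rank-$0$ classes $[L]-[1]$ for $L\in\Pic_G(X)$, and then identify the extension by computing the signed discriminant of $\langle1,1\rangle$. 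The underlying computation is the same in both arguments (the trivial Real line bundle hyperbolizes to $\R\oplus\R(1)$, equivalently $2\cdot1\equiv[\R]-[\R(1)]$ modulo hyperbolics, so the class is $[-1]$, nonzero by Remark \ref{Xcovers}), but the paper's filtration argument delivers the exactness of the sequence and the extension class in one package and meshes with the machinery used elsewhere (e.g.\ Theorem \ref{WRX-free}), whereas yours is more elementary and bundle-theoretic, at the price of leaning on the descent of the normalized discriminant to $I(X)$ quoted from \cite[5.2]{RGB} before Theorem \ref{w1:factors} -- a dependence you correctly flag.

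One presentational point, not a gap: you justify $w_1(\langle1,1\rangle)=[-1]$ by ``transporting'' the algebraic formula of Example \ref{disc(theta)} via Theorem \ref{w1:factors}, but that theorem concerns $X=V_\top$ for an algebraic variety, while the lemma is about an arbitrary free $G$-space. It is cleaner to compute directly in $KO_G(X)$: $2\cdot1$ is the class of $X\times\R^2$, and subtracting the hyperbolic class $H(1)=[\R\oplus\R(1)]$ gives the rank-$0$ representative $[\R]-[\R(1)]$, whose discriminant (Definition \ref{def:w1}) is $w_1(X\times\R(1))=[-1]$; this also makes explicit why any lift of the generator of $\Z/2$ squares to $[-1]\neq0$, so the extension cannot split.
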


\begin{proof}
The map $H:KR(X)\to KO_G(X)$ is compatible with the filtration 
$F_*$ associated to the Bredon spectral sequence \eqref{Bredon-ss}, and
we have $KO_G(X)/F_2=\Z\times \H^1_G(X,\Z/2)$. Since 
$H^1_G(X;KR^{-1})=0$ we have $KR(X)/F_2\cong\Z$.
By Remark \ref{H(1)}, $(2,[-1])$ is the image under $w_1\circ H$
of the trivial Real line bundle in $\Z\times \H^1_G(X,\Z/2)$. 
Since $[-1]\ne0$ by Remark \ref{Xcovers}, the result follows.
\end{proof}


The Stiefel--Whitney class $w_2:KO_G(X)\to\H_G^2(X,\Z/2)$
does not factor through $WR(X)$ either, because $w_2$
need not vanish on the image of 
$H:KR(X)\to KO_G(X)$. Consider the composition 
\[ \bar{w}_2:WR(X)\to {_2}\H^3_G(X,\Z(1)). \]
of the map $w_2:KO_G(X)\to \H^2_G(X,\Z/2)$, followed by the Bockstein.
We proved the following result in  \cite[5.6 and 5.8]{RGB}.

\goodbreak
\begin{theorem}\label{thm:Hasse}
If $V$ is a smooth variety, then the composition of
\[
GW(V)\to GR(V_\top)\cong KO_G(V_\top)\to WR(V_\top)
\]
with $\bar{w}_2$ agrees with the algebraic Hasse invariant
\ref{def:disc}, followed by the Bockstein:
\[
GW(V)\ \map{\mathrm{Hasse}}\  H^2_\et(V,\Z/2)/(\Pic V/2) 
\ \map{\beta}\ {_2}\H_G^3(V_\top,\Z(1)).
\]
That is, $\beta\,\mathrm{Hasse}(\theta)=\bar{w}_2(\theta_\top)$
for all $\theta\in GW(V)$.
\end{theorem}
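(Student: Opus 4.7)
The plan is to reduce both invariants to a common cup-product formula on diagonal forms, match them via Cox's theorem, and then verify Bockstein compatibility. By Example \ref{disc(theta)}, any symmetric form $(E,\theta)\in GW(V)$ restricts on a suitable dense open $U\subseteq V$ to a diagonal form $\langle a_1,\ldots,a_n\rangle$ with $a_i\in\cO^\times(U)$, and both sides of the claimed identity are natural with respect to $U\hookrightarrow V$. The targets inject into their analogues over $F=k(V)$ via the Bloch--Ogus sequence \eqref{BO-seq} and its Borel-cohomology counterpart, so it suffices to compare the two maps on such diagonal forms.

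On the algebraic side, Milnor's formula gives $\mathrm{Hasse}\langle a_1,\ldots,a_n\rangle=\sum_{i<j}\{a_i,a_j\}$ in $H^2_\et(F,\Z/2)$. On the topological side, the underlying Real bundle of $(E,\theta)|_U$ decomposes as $L_{a_1}\oplus\cdots\oplus L_{a_n}$, where $L_{a_i}$ is the Real line bundle whose $w_1$ corresponds to $[a_i]$ under the Kummer--Cox identification $F^\times/F^{\times2}\cong H^1_\et(U,\Z/2)\cong\H^1_G(U_\top,\Z/2)$ (cf.\ Theorem \ref{w1:factors}). The Whitney sum formula in Borel cohomology then yields $w_2\bigl(\bigoplus_i L_{a_i}\bigr)=\sum_{i<j}w_1(L_{a_i})\cup w_1(L_{a_j})$. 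Since Cox's isomorphism $H^*_\et(U,\Z/2)\cong\H^*_G(U_\top,\Z/2)$ preserves cup products, the two classes agree in $\H^2_G(U_\top,\Z/2)$.

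Applying the Bockstein $\beta$ to both sides completes the identity, and this last step is the principal obstacle. The algebraic $\beta$ is the connecting homomorphism of the Kummer sequence $1\to\mu_2\to\Gm\to\Gm\to1$, while the topological $\beta$ is that of $0\to\Z(1)\to\Z(1)\to\Z/2\to0$ in Borel cohomology. The task is to show that these two connecting maps agree under Cox's isomorphism, which should follow from a morphism between the two short exact sequences once one identifies $\Gm$ topologically with a sheaf resolving $\Z(1)$. Relatedly, the fact that both $\beta\circ\mathrm{Hasse}$ and $\bar w_2$ vanish on hyperbolic forms (so that they descend from $GW(V)$ through the quotient $H^2_\et(V,\Z/2)/(\Pic V/2)$) corresponds to the image of $\Pic(V)/2\hookrightarrow H^2_\et(V,\mu_2)$ being exactly the kernel of $\beta$; matching this vanishing algebraically and topologically is part of the same Bockstein comparison and is where the main technical work lies.
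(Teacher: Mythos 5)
The paper does not prove this theorem in the text; it is quoted from \cite{RGB} (5.6 and 5.8), so your proposal has to stand on its own, and as written it has genuine gaps. First, your reduction to a dense open $U$ rests on the claim that the targets ``inject into their analogues over $F$'' via Bloch--Ogus \emph{and a Borel-cohomology counterpart}. There is no such counterpart: the restriction $\H^2_G(V_\top,\Z/2)\to\H^2_G(U_\top,\Z/2)$, equivalently $H^2_\et(V,\Z/2)\to H^2_\et(U,\Z/2)$ under Cox, has kernel generated by the cycle classes of the divisorial components of $V\setminus U$ (purity/Gysin), so agreement of the two classes on every $U$ only gives agreement in $\H^2_G(V_\top,\Z/2)$ modulo the image of $\Pic(V)/2$. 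The argument can be repaired, but only by observing that this error term lies in the image of $c_1:\H^2_G(V_\top,\Z(1))\to\H^2_G(V_\top,\Z/2)$ and is therefore annihilated by $\beta$; that step is exactly what you omit. Second, the step you single out as ``the principal obstacle''---matching the Kummer connecting map with the topological Bockstein---is not what the statement requires: in Theorem \ref{thm:Hasse} both occurrences of $\beta$ are the \emph{same} topological Bockstein of $0\to\Z(1)\to\Z(1)\to\Z/2\to0$, applied to classes transported by Cox's isomorphism, so no algebraic-versus-topological Bockstein comparison is needed. What genuinely remains, and is not carried out, is the mod-$\ker\beta$ comparison just described together with the verification that $\beta\circ w_2$ vanishes on the image of the hyperbolic map $KR(X)\to KO_G(X)$ (for a Real bundle $E$ one has $w_2$ of the underlying real $G$-bundle equal to $c_1(E)$ mod $2$, hence in the image of $\H^2_G(X,\Z(1))$); this is what makes $\bar w_2$ well defined on $WR(X)$ and lets both sides descend to the quotients appearing in the statement.

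Two further points. Your parenthetical assertion that the image of $\Pic(V)/2$ is \emph{exactly} the kernel of $\beta$ is false in general: by \eqref{Krasnov} the map ${}_2\Br(V)\to{}_2\H^3_G(V_\top,\Z(1))$ has kernel $(\Z/2)^{\rho_0}$, so $\ker\beta$ strictly contains $\Pic(V)/2$ whenever $\rho_0>0$; only the inclusion $\Pic(V)/2\subseteq\ker\beta$ is needed, and it should be stated as such. On the positive side, your core computation---diagonalize over $U$, identify the image in $KO_G(U_\top)$ with a sum of real $G$-line bundles whose discriminants match the units $a_i$ via Theorem \ref{w1:factors}, and apply the Whitney formula together with the multiplicativity of Cox's isomorphism to get $\sum_{i<j}\{a_i,a_j\}$ on both sides---is the right heart of the argument and is consistent with the proof cited from \cite{RGB}; the missing work is entirely in the two descent/well-definedness issues above.
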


\goodbreak\medskip
We end this section with a discussion of equivariant Chern classes.

\begin{defn}\label{Chern}
In \cite{Kahn}, Bruno Kahn defined equivariant Chern classes 
$c_n:KR(X)\to \H^{2n}_G(X,\Z(n))$ for Real vector bundles, with
the first Chern class $c_1$ inducing an isomorphism between the group of
rank~1 Real vector bundles on $X$ and $\H^2_G(X,\Z(1))$, 
where $\Z(1)$ is the sign representation. 
In particular, $c_1:KR(X)\to \H^2_G(X,\Z(1))$ is a surjection.
\end{defn}

The following calculation is taken from
\cite[Th.\,1]{Kahn}. Recall from Definition \ref{Chern} that the
first Chern class $c_1(E)$ of a Real vector bundle $E$ on $X$
is an element of $H^2_G(X,\Z(1))$. If $E$ has rank $d+1$ 
and $\bP(E)$ is the associated projective bundle, then the fiber of
$\bP(E)$ over a point $x\in X$ is the copy of
$\C\bP^d$ corresponding to $\pi^{-1}(x)$.

\begin{theorem}[Kahn]\label{PBF}
Let $E\smap{\pi} X$ be a rank~2 Real bundle on a $G$-space $X$, 
and let $\bP(E)$ denote the associated projective space. 
If $\xi=c_1(E)\in \H^2_G(X,\Z(1))$ is the first Chern 
class of $E$ then for any coefficient system $A$
\[
\H^n_G(\bP(E),A) = \H^n_G(X,A) \oplus \H^{n-2}_G(X,A(1))\cdot\xi.
\]
In particular, $\H^1_G(\bP(E),\Z/2) = \H^1_G(X,\Z/2)$ and
\[
\H^2_G(\bP(E),\Z(1)) = \H^2_G(X,\Z(1)) \oplus \H^0_G(X,\Z).
\]
\end{theorem}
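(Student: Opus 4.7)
The plan is to reduce to a standard Leray--Hirsch argument by passing to Borel constructions. Set $Y=\bP(E)$. Then $Y_G\to X_G$ is an ordinary $\C\bP^1$-bundle, and the tautological line bundle $\cO_Y(1)$ carries a natural Real structure, so it has a first Chern class (in the sense of Definition \ref{Chern})
\[
\eta = c_1(\cO_Y(1))\in \H^2_G(Y,\Z(1)).
\]
Translated to $Y_G$, $\eta$ is a cohomology class with values in a twisted $\Z$-coefficient local system on $Y_G$ whose monodromy is controlled by the sign representation, and one has $\H^n_G(-,A)=H^n(-_G,\tilde A)$ for any coefficient system $A$, where $\tilde A$ is the associated local system. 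My first step is to verify the fiber-restriction property: on every fiber of $Y_G\to X_G$, the class $\eta$ restricts to a generator of the (twisted) $H^2$ of that fiber.

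To check this, it suffices to examine the two types of fibers. Over a fixed point $x\in X^G$, the fiber $\bP(E_x)\cong\C\bP^1$ carries the antiholomorphic involution induced by the Real structure on $E_x$; a direct Borel spectral sequence calculation (analogous to the one made in Example \ref{WR(2-sphere)} for $S^{1,2}$) gives $\H^2_G(\C\bP^1,\Z(1))\cong\Z$, generated by the equivariant Chern class of $\cO(1)$. Over a conjugate orbit $\{x,\sigma x\}$, the fiber is $G\times\C\bP^1$, and for any coefficient system $A$ the Borel cohomology of $G\times\C\bP^1$ collapses to ordinary $H^*(\C\bP^1,\Z)$, where again $c_1(\cO(1))$ is the standard generator.

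Next I would apply Leray--Hirsch to the Serre spectral sequence of the fibration $Y_G\to X_G$ with twisted coefficients. Because the fiber cohomology is a free module of rank $2$ on the classes $\{1,\eta|_{\text{fiber}}\}$ and $\eta$ comes from a globally defined class on $Y_G$, all differentials in the spectral sequence vanish and the pairing
\[
\H^*_G(X,A)\oplus \H^{*-2}_G(X,A(1))\cdot\eta \;\longrightarrow\; \H^*_G(Y,A)
\]
is an isomorphism of $\H^*_G(X)$-modules. The shift from $A$ to $A(1)$ in the second summand is forced by the fact that cup product with $\eta\in\H^2_G(-,\Z(1))$ twists the coefficients by the sign representation. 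Taking $A=\Z/2$ collapses the twist and gives the first stated identity; taking $A=\Z(1)$ gives the second, since $\Z(1)(1)\cong\Z$.

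The main obstacle is bookkeeping for the sign twist: one must verify that Leray--Hirsch is compatible with twisted local systems and that cup product with $\eta$ really does interchange the coefficient systems $A$ and $A(1)$ in the asserted way. This is precisely the point carried out in Kahn's \cite{Kahn}, so the cleanest execution is to follow his argument directly on the Borel construction side.
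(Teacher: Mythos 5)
The paper itself offers no proof of this statement: it is quoted verbatim from Kahn's paper and justified only by the citation to \cite[Th.\,1]{Kahn}. Your Leray--Hirsch sketch on the Borel construction is the standard (and, as far as I can tell, essentially Kahn's) argument, and the mechanism you identify for the coefficient twist --- that $\xi$ lives in $\H^2_G(-,\Z(1))$, so cupping with it converts $A(1)$-coefficients into $A$-coefficients --- is exactly right. One small imprecision worth fixing: Leray--Hirsch is applied to the fibration $\bP(E)\times_G EG\to \XG$, whose fibers are all plain copies of $\C\bP^1$ (the involution is absorbed into the base and reappears only as monodromy acting by $-1$ on $H^2(\C\bP^1,\Z)$, which is precisely the source of the $A(1)$ twist); so the fiber-restriction hypothesis you must verify is that the image of $\eta$ under the forgetful map to ordinary $H^2(\C\bP^1,\Z)$ generates, which follows from compatibility of Kahn's Real $c_1$ with the ordinary first Chern class, rather than from the equivariant computations of $\H^2_G$ of the two types of equivariant fibers of $\bP(E)\to X$ that you carry out. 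With that adjustment the argument is complete.
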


\begin{subremark}\label{H_G(P2)}
Kahn's formula is more general. In particular, 
if $X$ is a point and $E=\cO_X^2$
so that $\bP(E)=\bP^2$, it yields 
\[
\H^n_G(\bP^2,A) = \H^n_G(\pt,A) \oplus \H^{n-2}_G(\pt,A(1))\cdot\xi
\oplus \H^{n-4}_G(\pt,A)\cdot\xi^2.  \]
\end{subremark}

\goodbreak
\section{Algebraic surfaces with no $\R$-points}\label{sec:C}

Let $V$ be an ireducible variety defined over $\R$.\
Recall \cite[Ex.\,II.3.15]{Hart} that $V$ is said to be
{\it geometrically connected} (or {\it geometrically integral})
if $V\times_\R\C$ is connected, i.e., the function field of $V$
does not contain $\C$.  In this case, $V_\top$ is connected.

If $V$ is an irreducible surface defined over $\R$, and $V_\top$ 
is not connected, then $V$ is also defined over $\C$, i.e., 
$V$ is a complex variety. In this case, 
the $G$-space $V_\top=\Hom_{\R}(\Spec(\C),V)$ is 
$G\times V(\C)$, where $V(\C)=\Hom_{\C}(\Spec(\C),V)$ 
is the usual space of complex points.

We proved the following theorem in \cite[Thm.\,7.4]{RGB};
it is a refinement of a theorem of Zibrowius \cite[5.12]{Zibrowius}.
The invariant $\rho$ is the rank of the cokernel of the 
first Chern class $c_1:NS(V)\to H^2(V(\C),\Z)$, and
$p_g$ is the geometric genus;
the inequality $\rho\ge2\,p_g$ in Theorem \ref{pg=0} 
follows from Hodge theory.

\begin{theorem}\label{pg=0}
Suppose that $V$ is a smooth projective surface over $\C$. 
Then there is a split exact sequence
\[
0\to (\Z/2)^{\rho} \to W(V)\to WR(V_\top) \to 0,
\]
where $\rho\ge2\,p_g$, and $\rho=0$ when $p_g=0$.
Thus $W(V)\to WR(V_\top)$ is an isomorphism if and only if ~$p_g=0$.
\end{theorem}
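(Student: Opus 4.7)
The plan is to reduce to a purely topological computation, exploiting the fact that $V$ is defined over $\C$. Since $V_\top = V \times_\R \Spec\C$ decomposes as $V(\C) \sqcup \overline{V(\C)}$ with $G$ swapping the factors, we have $V_\top \cong G \times V(\C)$, and by Example \ref{trivial action} (and the remark following it),
\[
WR(V_\top) \cong \coker\bigl(KU(V(\C)) \smap{r} KO(V(\C))\bigr),
\]
a $\Z/2$-algebra. The comparison map $W(V) \to WR(V_\top)$ is then induced by analytification: an algebraic symmetric form $(E,\theta)$ produces a complex topological bundle $E^{\an}$ on $V(\C)$ with a topological symmetric form, contributing a class in this cokernel.

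For surjectivity, I would use the Atiyah--Hirzebruch spectral sequence for $KO(V(\C))$ together with the observation that the image of $K_0^{\text{alg}}(V) \to KU(V(\C))$ generates everything with first Chern class in $\mathrm{NS}(V)$, by Lefschetz's $(1,1)$ theorem. After realification and passage to the cokernel, one checks that every class in $WR(V_\top)$ is hit by an algebraic form. To analyze the kernel, the key tool is the Hasse invariant of Theorem \ref{thm:Hasse}: the kernel of $W(V) \to WR(V_\top)$ is detected by $\beta \circ \mathrm{Hasse}$, which factors through $\coker(\mathrm{NS}(V)/2 \to H^2(V(\C),\Z/2))$. By the Kummer sequence and Lefschetz $(1,1)$, this cokernel has $\Z/2$-rank equal to $\rho$.

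The splitting is produced by exhibiting, for each of $\rho$ generators of $\coker(c_1) \otimes \Z/2$, an explicit symmetric form in $W(V)$ whose Hasse invariant realizes the chosen class via suitable symbols in ${}_2\Br(V)$. The lower bound $\rho \geq 2p_g$ is a Hodge-theoretic observation: the subspace $H^{2,0}(V) \oplus H^{0,2}(V) \subset H^2(V(\C),\C)$ of real dimension $2p_g$ is transverse to $H^{1,1}(V)$, which contains $\mathrm{NS}(V) \otimes \R$; so $\coker(c_1)$ has real rank at least $2p_g$, with equality when $p_g = 0$ (in which case $\rho = 0$ and $W(V) \cong WR(V_\top)$).

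The main obstacle is the precise identification of the kernel as exactly $(\Z/2)^\rho$ rather than a larger group: one must verify that no further obstructions arise from higher Chern or Stiefel--Whitney classes beyond $w_2$. The assumption that $V$ is a surface is essential here, since it truncates the relevant Bredon and Atiyah--Hirzebruch spectral sequences at low enough filtration that $\bar{w}_2$ captures everything in the kernel.
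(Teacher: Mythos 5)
Your reduction to $WR(V_\top)=\coker\bigl(KU(Y)\to KO(Y)\bigr)$ for $Y=V(\C)$, and the Hodge-theoretic bound $\rho\ge 2p_g$ with $\rho=0$ when $p_g=0$ (Lefschetz $(1,1)$), are fine; also note the splitting needs no explicit forms: $-1$ is a square in $\cO(V)$, so $W(V)$ has exponent $2$ and any surjection of $\Z/2$-vector spaces splits. But your identification of the kernel is incorrect as stated. The group $\coker\bigl(\mathrm{NS}(V)/2\to H^2(Y,\Z/2)\bigr)$ is ${}_2\Br(V)$, whose $\Z/2$-dimension is $\rho+\dim_{\Z/2}\,{}_2H^3(Y,\Z)$, not $\rho$: since $\coker\bigl(c_1\colon \mathrm{NS}(V)\to H^2(Y,\Z)\bigr)$ is torsion-free of rank $\rho$, there is an exact sequence $0\to(\Z/2)^\rho\to{}_2\Br(V)\to{}_2H^3(Y,\Z)\to0$ (the surface case of \eqref{Krasnov}). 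The correct statement is that $(\Z/2)^\rho$ is the \emph{kernel} of the Bockstein ${}_2\Br(V)\to{}_2H^3(Y,\Z)$, i.e.\ the image of $H^2(Y,\Z)$ modulo that of $\mathrm{NS}(V)$. With your count, an Enriques surface (where $p_g=0$, $\rho=0$, but ${}_2H^3(Y,\Z)=\Z/2$) would acquire a nonzero kernel, contradicting the ``isomorphism if and only if $p_g=0$'' clause.

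The deeper gap is exactness in the middle and surjectivity, which you dispose of with ``one checks that every class in $WR(V_\top)$ is hit.'' Since $WR(V_\top)\cong\Z/2\oplus H^1(Y,\Z/2)\oplus{}_2H^3(Y,\Z)$ (the $4$-dimensional analogue of Theorem \ref{C-3fold}), the rank and discriminant levels are indeed hit (Example \ref{disc(theta)}, Theorem \ref{w1:factors}); but hitting the top filtration piece, and showing the kernel is no smaller than $(\Z/2)^\rho$, together amount to showing that $I_2(V)\to{}_2\Br(V)$ is onto for a smooth projective complex surface, i.e.\ that every $2$-torsion (unramified) Brauer class is the Hasse invariant of a symmetric form defined on all of $V$. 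That is exactly the delicate point (compare Remark \ref{I_2/I_3}, where the analogous question is recorded as open in general), and neither Lefschetz $(1,1)$ nor the image of $K_0^{\mathrm{alg}}(V)$ in $KU(Y)$ addresses it, because the issue concerns symmetric forms rather than bundles. The paper itself does not reprove the statement: it quotes \cite[Thm.~7.4]{RGB}, refining Zibrowius \cite{Zibrowius}, where surjectivity and the exact size of the kernel are obtained by comparing algebraic with topological Hermitian (Grothendieck--Witt) $K$-theory via Suslin-type comparison theorems and Bott sequences, not by constructing forms with prescribed invariants; some substitute for that input is needed before your sketch becomes a proof.
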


Surfaces with $p_g=0$ include the projective plane $\bP^2$,
rational surfaces, ruled surfaces, K3 surfaces, and Enriques surfaces
(see \cite{Hart}).
It also includes some surfaces of general type, such as Godeaux surfaces,
Burniat surfaces and Mumford's fake projective plane.

\medskip
When $V$ is geometrically connected, i.e., $V_\top$ is connected, 
the following result was proven in \cite[Thm.\,8.5]{RGB}. 
The invariant $\rho_0$ is the rank of $c_1:\Pic(V)\to \H_G^2(X,\Z(1))$, 
and the relation $\rho_0\ge p_g$ follows from Hodge theory.

\goodbreak
\begin{theorem}\label{WR:pg=0}
Let $V$ be a smooth geometrically connected projective surface over $\R$
with no real points. Then there is a split exact sequence
\[
0\to (\Z/2)^{\rho_0} \to W(V)\to WR(V_\top) \to 0,
\]
where $\rho_0\ge p_g$, and $W(V)\to WR(V_\top)$ is
an isomorphism if and only if $V$ has geometric genus~$p_g=0$.
\end{theorem}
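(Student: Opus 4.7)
The plan is to mirror the strategy behind the complex case (Theorem \ref{pg=0}), by reducing everything to the $I_2$-filtration via the snake lemma. Setting $X = V_\top$, I would apply the snake lemma to the short exact sequences $0 \to I_2(V) \to W(V) \to W(V)/I_2(V) \to 0$ and the corresponding one for $X$. Since Corollary \ref{iso-modI2} gives an isomorphism on the quotients $W(V)/I_2(V) \smap{\cong} WR(X)/I_2(X)$, both the kernel and the cokernel of $W(V) \to WR(X)$ coincide with those of the restricted map $I_2(V) \to I_2(X)$.

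Next I would identify each $I_2$ group with a Brauer-type invariant. On the algebraic side, since $V(\R) = \emptyset$, Corollary \ref{W/I2} gives $I_2(V) \cong {}_2\!\Br(V)$, and the isomorphism is realized by the Hasse invariant of Definition \ref{def:disc}. Topologically, Theorem \ref{thm:Hasse} produces $\bar{w}_2 : WR(X) \to {}_2\H^3_G(X, \Z(1))$; I would argue, using the Bredon spectral sequence for $KO_G$ on a 4-dimensional $G$-complex together with the topological counterpart of the vanishing of $I_3$ in dimension two, that $\bar{w}_2$ restricts to an isomorphism $I_2(X) \smap{\cong} {}_2\H^3_G(X, \Z(1))$.

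I would then compare these two Brauer groups through parallel Kummer-type short exact sequences. Algebraically, $0 \to \Pic(V)/2 \to H^2_\et(V, \Z/2) \to {}_2\!\Br(V) \to 0$ comes from the usual Kummer sequence. For the topological analogue, $0 \to \Pic_G(X)/2 \to \H^2_G(X, \Z/2) \to {}_2\H^3_G(X, \Z(1)) \to 0$, I would combine Kahn's identification $\Pic_G(X) \cong \H^2_G(X, \Z(1))$ from Definition \ref{Chern} with the Bockstein sequence for $\Z(1) \to \Z(1) \to \Z/2$. Cox's theorem identifies the middle terms, and the left-hand map is the reduction of the equivariant $c_1 : \Pic(V) \to \Pic_G(X)$. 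A snake lemma argument then forces surjectivity of ${}_2\!\Br(V) \to {}_2\H^3_G(X, \Z(1))$ (hence of $W(V) \to WR(X)$) and identifies its kernel with $\coker(\Pic(V)/2 \to \Pic_G(X)/2)$.

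Finally, I would compute the $\Z/2$-dimension of this cokernel and check that it equals $\rho_0$, directly from the definition of $\rho_0$ via the rank of $c_1 : \Pic(V) \to \H^2_G(X, \Z(1))$. The Hodge-theoretic bound $\rho_0 \ge p_g$ should follow from the $(p,q)$-decomposition of $H^2(V(\C), \C)$, together with the fact that the image of $c_1$ lies in the real $(1,1)$ part, descended from $\C$ to $\R$ via the involution; the equivalence $\rho_0 = 0 \Leftrightarrow p_g = 0$ is then immediate. A splitting of the extension is obtained from the fact that the kernel is an elementary abelian 2-group, so any section exists set-theoretically and can be made homomorphic by exploiting the $\Z/8$-algebra structure of Lemma \ref{exp.8}. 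The main obstacle will be the topological identification $I_2(X) \cong {}_2\H^3_G(X, \Z(1))$ via $\bar{w}_2$, which requires careful control of the Bredon filtration on $WR(X)$ in low codegrees, and the Hodge-theoretic bound $\rho_0 \ge p_g$ for a surface with no real points, where the descent along $\mathrm{Gal}(\C/\R)$ must be handled with care.
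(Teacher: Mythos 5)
Your outline assembles essentially the right ingredients, and it is worth saying that the paper itself does not prove this statement: it quotes it from \cite[Thm.\,8.5]{RGB}, and the pieces it does record --- Corollary \ref{iso-modI2}, the identification $I_2(V)\cong{}_2\Br(V)$ from Proposition \ref{Itorsion} when $\nu=0$, the computation of $WR$ of a free $4$-dimensional $G$-complex (Theorem \ref{WRX-free}), the compatibility of the Hasse invariant with $\bar{w}_2$ (Theorem \ref{thm:Hasse}), and the sequence \eqref{Krasnov} --- are exactly what your sketch reassembles. So the skeleton is the intended one. Two of your steps, however, are genuine gaps rather than routine details.

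First, the splitting. An extension by an elementary abelian $2$-group need not split ($0\to\Z/2\to\Z/4\to\Z/2\to0$), a set-theoretic section cannot in general be corrected to a homomorphism, and the $\Z/8$-algebra structure of Lemma \ref{exp.8} supplies no section. Splitting of $0\to K\to W(V)\to WR(V_\top)\to0$ with $K\cong(\Z/2)^{\rho_0}$ is equivalent to $K\cap 2W(V)=0$, and this is not formal: when $V(\R)=\emptyset$ one can have $2I(V)\ne0$ inside $I_2(V)$ (the level-$4$ surfaces of Example \ref{Motzkin} have $4\ne0$ in $W(V)$), so one must genuinely show that no nonzero element of $K$ is divisible by $2$ in $W(V)$, or construct an explicit retraction (e.g.\ starting from the splitting of \eqref{Krasnov} and arguing that it lifts); your proposal does neither. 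Second, the identification of the kernel with $(\Z/2)^{\rho_0}$: your snake lemma gives the kernel as $\coker(c_1)\otimes\Z/2$ for $c_1:\Pic(V)\to\H^2_G(X,\Z(1))$, and its $\Z/2$-dimension equals the rational corank $\rho_0$ only if $\coker(c_1)$ has no $2$-torsion. That requires an argument (the equivariant exponential sequence embedding $\coker(c_1)$ into the $\R$-vector space $H^2_\an(X,\cO)^G$, plus $\Pic(V)\cong\H^1_G(X,\cO_\an^\times)$ for projective $V$); it is precisely the content of \eqref{Krasnov}, i.e.\ \cite[6.3]{RGB}, which you assert rather than prove. Two smaller points: Kahn's $c_1$ identifies rank-one \emph{Real} line bundles with $\H^2_G(X,\Z(1))$, whereas the paper's $\Pic_G(X)$ (real $G$-line bundles) is $\H^1_G(X,\Z/2)$, so your ``topological Kummer sequence'' should simply be the Bockstein sequence of $0\to\Z(1)\to\Z(1)\to\Z/2\to0$; and $p_g=0\Rightarrow\rho_0=0$ does not follow from the inequality $\rho_0\ge p_g$ but from Lefschetz $(1,1)$ over $\C$ followed by taking $G$-invariants rationally.
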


The following calculation was given in \cite[8.1]{RGB}.
As in Lemma \ref{WR(graph)}, $\tH^1(X/G,\Z/2)$ denotes the
quotient of $H^1(X/G,\Z/2)$ by the subgroup generated by 
$[-1]=w_1(X\times\R(1))$.

\begin{theorem}\label{WRX-free}
Let $X$ be a connected 4-dimensional $G$-CW complex.
If $G$ acts freely on $X$, then $WR(X)$ is a $\Z/8$-algebra, and
there is an extension:
\[
0 \to {_2}\H^3_G(X,\Z(1)) \to WR(X) ~\smap{}~ \Z/4\times\tH^1(X/G,\Z/2)\to0;
\]
\end{theorem}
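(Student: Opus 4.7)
The plan is to combine Lemma \ref{Z/4} with the Hasse invariant $\bar{w}_2$ of Theorem \ref{thm:Hasse}. Since $G$ acts freely on $X$, set $Y = X/G$: the quotient is a 4-dimensional CW complex, and the Borel fibration degenerates so that Bredon and Borel cohomology of $X$ reduce to ordinary cohomology of $Y$ with coefficients twisted by the double cover $X \to Y$; in particular $\H^3_G(X,\Z(1)) \cong H^3(Y,\Z(1))$. By Lemma \ref{Z/4}, the rank and discriminant $w_1$ already yield a surjection $WR(X) \to \Z/4 \times \tH^1(Y,\Z/2)$ with kernel $I_2(X)$. Thus I would reduce the theorem to proving that $\bar{w}_2$ restricts to an isomorphism $I_2(X) \cong {_2}\H^3_G(X,\Z(1))$.

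To analyze $I_2(X)$, I would set up the Bredon spectral sequence \eqref{Bredon-ss} for both $KO_G(X)$ and $KR(X)$ and compare them via the hyperbolic map $H$. Under the free action assumption, these reduce to the Atiyah--Hirzebruch spectral sequences for $KO(Y)$ and for $KU(Y)$ with coefficients twisted by the sign local system, since complex conjugation acts by $-1$ on the Bott element of $\pi_{-2}KU = \Z$. Because $\dim Y \le 4$, only a single $d_2$-differential of $\mathrm{Sq}^2$-type can contribute, and one reads off that $I_2(X)$ lives in Bredon filtration $\ge 2$ in the cokernel. The Bockstein of the $E_\infty^{2,-2}$-piece in the $KO_G$-sequence lands in $H^3(Y,\Z(1)) = \H^3_G(X,\Z(1))$, and Theorem \ref{thm:Hasse} identifies the induced map on $I_2(X)$ with $\bar{w}_2$.

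The main obstacle will be showing that $\bar{w}_2$ is surjective onto ${_2}\H^3_G(X,\Z(1))$; injectivity on $I_2(X)$ is a consequence of the filtration analysis above. For surjectivity I would use Kahn's equivariant Chern classes (Definition \ref{Chern}) together with the projective bundle formula (Theorem \ref{PBF}) to realize any prescribed 2-torsion class in $\H^3_G(X,\Z(1))$ as the Hasse invariant of an explicit symmetric form on a $\bP(E)$-bundle over $X$. Once the extension is established, the $\Z/8$-algebra structure is automatic: the unit $1 \in WR(X)$ maps to an element of order~4 in $\Z/4 \times \tH^1(Y,\Z/2)$, so $4 \cdot 1 \in I_2(X)$, and since $I_2(X) \cong {_2}\H^3_G(X,\Z(1))$ is 2-torsion, this forces $8 \cdot 1 = 0$ in $WR(X)$.
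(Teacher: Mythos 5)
Your first half follows the intended route (it is how the analogous six-dimensional case, Theorem \ref{R-3fold}, is proved in this paper, and how the cited source \cite[8.1]{RGB} argues): pass to $Y=X/G$, use Lemma \ref{Z/4} to get the surjection $WR(X)\to\Z/4\times\tH^1(Y,\Z/2)$ whose kernel is $I_2(X)=F_2WR(X)$, and compare the Bredon filtrations of $KR(X)$ and $KO_G(X)\cong KO(Y)$. The second half, however, has a genuine gap. First, Theorem \ref{thm:Hasse} is a statement about smooth algebraic varieties $V$ (it compares the algebraic Hasse invariant on $GW(V)$ with $\bar w_2$ on $V_\top$); it says nothing about an arbitrary free $G$-CW complex, so it cannot be invoked to ``identify the induced map on $I_2(X)$ with $\bar w_2$.'' Second, and more seriously, your surjectivity step fails as described: a symmetric form on a projective bundle $\bP(E)\to X$ is an element of $WR(\bP(E))$, not of $WR(X)$, and Kahn's formula (Theorem \ref{PBF}) only computes $\H^*_G(\bP(E))$; you provide no transfer or pushforward bringing such classes back to $X$, so this construction cannot show that $\bar w_2$ carries $I_2(X)$ onto ${_2}\H^3_G(X,\Z(1))$. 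Finally, you never control the bottom of the filtration: without $E_2^{3,-3}=0$ and the surjectivity of $E_\infty^{4,-4}(KR)\to E_\infty^{4,-4}(KO_G)$ (Lemma \ref{H4-iso}), which force $F_3WR(X)=0$, the kernel of your surjection could be strictly larger than the $E^{2,-2}$-contribution, so even the injectivity you claim to read off is not yet justified.

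The point you are missing is that the filtration comparison already yields both injectivity and surjectivity at once, with no need for $\bar w_2$ or for realizing classes geometrically. Since $G$ acts freely and $\dim Y=4$, no differential touches $E^{2,-2}$ (the incoming $d_2$ is $Sq^2$ on $H^0$, hence zero, and $d_3$ lands in $H^5(Y)=0$), so $E_\infty^{2,-2}(KO_G)\cong H^2(Y,\Z/2)$ and $E_\infty^{2,-2}(KR)\cong H^2(Y,\Z(1))$ (local coefficients), and the hyperbolic map between them is reduction mod~$2$ because $KU^{-2}\to KO^{-2}$ is onto. Together with $E^{3,-3}=0$, Lemma \ref{H4-iso}, and the injectivity of $KR(X)/F_2\to KO_G(X)/F_2$ (Remark \ref{H(1)}), this gives $I_2(X)=F_2WR(X)\cong\coker\bigl(H^2(Y,\Z(1))\to H^2(Y,\Z/2)\bigr)\cong{_2}H^3(Y,\Z(1))\cong{_2}\H^3_G(X,\Z(1))$, the last isomorphism by \ref{ex:coeff}(c). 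Your closing $\Z/8$ argument is fine once the extension is in place.
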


In the rest of this section, we describe $WR(V_\top)$ for several 
surfaces with $p_g=0$ and with no real points.

\medskip
\paragraph{\bf Forms of $\bP^1\times\bP^1$}
A variety $V$ is a {\it form} of $\bP^1\times\bP^1$ if
$V\otimes_\R\C\cong \bP^1\times\bP^1$. In this case, $p_g=0$ and
$V_\top$ is $S^2\times S^2$, so $W(V)\cong WR(V_\top)$
by Theorem \ref{WR:pg=0}.
In this case, the calculation of $WR$ is 
dictated by the structure of $H^2(V_\top,\Z)$ as a $G$-module.

\begin{theorem}\label{formsof P1xP1}
Suppose that $G$ acts freely on $X=S^2\times S^2$.

\noindent
(i) If $G$ acts as $-1$ on $H^2(X,\Z)$ then $WR(X)\cong\Z/4$.

\noindent
(ii) If 
$H^2(X,\Z)\cong\Z[G]$ as a $G$-module then 
$WR(X)\cong\Z/4\oplus\Z/2$.

\noindent
(iii) If $H^2(X,\Z)\cong \Z\oplus\Z(1)$ as a $G$-module then
$WR(X)\cong\Z/4\oplus\Z/2$.
\end{theorem}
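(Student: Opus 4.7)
The plan is to invoke Theorem~\ref{WRX-free}, which applies since $G$ acts freely on $X=S^2\times S^2$, yielding the short exact sequence
\[
0 \to {}_2\H^3_G(X,\Z(1)) \to WR(X) \to \Z/4 \times \tH^1(X/G,\Z/2) \to 0,
\]
and then to compute the two end terms in each of the three cases. First, the factor $\tH^1(X/G,\Z/2)$ vanishes uniformly: because $H^1(X,\Z/2)=0$, the Cartan--Leray spectral sequence gives $H^1(X/G,\Z/2)\cong H^1(G,\Z/2)=\Z/2$, and by Remark~\ref{Xcovers} this $\Z/2$ is generated by $[-1]=w_1(X\times\R(1))$, so the quotient $\tH^1(X/G,\Z/2)$ is zero in all three cases. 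Hence the extension reduces to $0\to{}_2\H^3_G(X,\Z(1))\to WR(X)\to\Z/4\to 0$.

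Next I compute ${}_2\H^3_G(X,\Z(1))$ via the Borel spectral sequence $E_2^{p,q}=H^p(G,H^q(X,\Z(1)))\Rightarrow\H^{p+q}_G(X,\Z(1))$. Since $H^*(X,\Z)$ is torsion-free and concentrated in degrees $0,2,4$ with ranks $1,2,1$, the $G$-module $H^2(X,\Z(1))=H^2(X,\Z)\otimes\Z(1)$ is $\Z^2$ with trivial action in case (i), $\Z[G]$ in case (ii), and $\Z(1)\oplus\Z$ in case (iii). Combined with the standard Tate cohomology of $G$, the $p+q=3$ antidiagonal of $E_2$ has only $E_2^{3,0}=\Z/2$ in cases (i) and (ii), and additionally $E_2^{1,2}=\Z/2$ in case (iii). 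The decisive differential is the transgression $d_3\colon E_3^{0,2}\to E_3^{3,0}$. In case (i) I show $d_3$ is surjective---this is detected by the Chern classes $c_1$ (Definition~\ref{Chern}) of the Real line bundles $p_1^*\cO(1)$ and $p_2^*\cO(1)$ on $\bP^1_\C\times\bP^1_\C$ via Kahn's projective bundle formula (Theorem~\ref{PBF})---so ${}_2\H^3_G(X,\Z(1))=0$ and $WR(X)=\Z/4$. In case (ii), Shapiro's lemma gives $H^p(G,\Z[G])=0$ for $p>0$, and the generator of $E_3^{0,2}=\Z$ lifts to $\H^2_G(X,\Z(1))$ as the Chern class of a $G$-invariant Real line bundle (such as $\cO(1,1)$), hence is a permanent cycle; so $d_3=0$ and ${}_2\H^3_G(X,\Z(1))=\Z/2$. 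In case (iii), an analogous Chern-class computation together with the analysis of the $(1,2)$ contribution and any extension in $\H^3_G$ yields ${}_2\H^3_G(X,\Z(1))=\Z/2$.

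In cases (ii) and (iii), it then remains to split the extension $0\to\Z/2\to WR(X)\to\Z/4\to 0$. By Lemma~\ref{Z/4}, the image of $2\cdot 1\in WR(X)$ in $WR(X)/I_2(X)$ equals $[-1]\in H^1(X/G,\Z/2)$, so $4\cdot 1\in I_2(X)$. Under the identification $I_2(X)\cong{}_2\H^3_G(X,\Z(1))$ coming from the extension, the class of $4\cdot 1$ corresponds to $\bar w_2$ of the trivial rank-$4$ bundle (Theorem~\ref{thm:Hasse}), which is zero. Hence $4\cdot 1=0$ in $WR(X)$, so $WR(X)$ has exponent~$4$ and the extension splits as $\Z/4\oplus\Z/2$.

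The main technical obstacle is the analysis of the transgression $d_3$ in the middle step---pinning down precisely which differentials vanish or not by comparing them against Chern classes of explicit Real line bundles through Kahn's formula---especially in case (iii), where both $E_2^{3,0}$ and $E_2^{1,2}$ contribute $\Z/2$ and one must determine exactly which contribution survives and what extension they form in $\H^3_G(X,\Z(1))$.
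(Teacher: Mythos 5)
Your first step coincides with the paper's: both invoke Theorem \ref{WRX-free}, observe that $H^1(X/G,\Z/2)\cong\Z/2$ is generated by $[-1]$ so that $\tH^1(X/G,\Z/2)=0$, and thereby reduce everything to computing ${}_2\H^3_G(X,\Z(1))$. But the way you compute that group is where the proposal breaks down, and this computation is the actual content of the theorem. In case (i) your argument runs in the wrong logical direction: exhibiting Real line bundles and their classes $c_1\in\H^2_G(X,\Z(1))$ can only produce permanent cycles in $E^{0,2}$, i.e.\ it can only show that $d_3$ kills \emph{less}; to conclude ${}_2\H^3_G(X,\Z(1))=0$ you must show $d_3\colon E_3^{0,2}\to E_3^{3,0}\cong\Z/2$ is \emph{onto}, i.e.\ that some $G$-invariant class in $H^2(X,\Z(1))$ is \emph{not} the restriction of an equivariant class -- a non-existence statement that no list of bundles can furnish. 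Worse, the bundles you name do not exist as Real bundles in case (i): for the involution with $H^2(X,\Z)\cong\Z(1)^2$ (e.g.\ $Q_{2,\top}\cong S^{3,0}\times S^{3,0}$) the restriction $\H^2_G(S^{3,0},\Z(1))\to H^2(S^2,\Z)$ has image $2\Z$, so $p_i^*\cO(1)$ carries no Real structure; and Kahn's Theorem \ref{PBF} presupposes a presentation of $X$ as $\bP(E)$ over a Real base, which an arbitrary free involution does not provide. Case (ii) rests on the same unavailable algebraic input (a Real structure on ``$\cO(1,1)$'' for an arbitrary free action with $H^2\cong\Z[G]$), and case (iii) -- where $E_2^{1,2}$ and $E_2^{3,0}$ both contribute and one must decide which survives -- is explicitly left open in your last paragraph, which is to say the theorem is left unproved there.

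The paper avoids all of this geometry. It uses the coefficient sequence $0\to\Z\to\Z[G]\to\Z(1)\to0$ together with $\H^*_G(X,\Z)\cong H^*(X/G,\Z)$ and $\H^*_G(X,\Z[G])\cong H^*(X,\Z)$ (free action) to identify $\H^3_G(X,\Z(1))$ with the kernel of $H^4(X/G,\Z)\to H^4(X,\Z)$ as in \eqref{H4-X/G}, and then computes $H^4(X/G,\Z)$ from the Cartan--Leray spectral sequence $H^p(G,H^q(X,\Z))\Rightarrow H^{p+q}(X/G,\Z)$. The key device, which your sketch is missing, is purely formal: $H^n(X/G,\Z)=0$ for $n>4$ forces exactness of the $d_3$-lines in high degree, and cup product with $t\in H^2(BG)$ is an isomorphism $E_2^{p,q}\cong E_2^{p+2,q}$ (for $p>0$) commuting with differentials, so this vanishing is transported back down and pins the relevant $d_3$ in each of the three cases with no reference to line bundles. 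You could repair your approach the same way inside your $\Z(1)$-coefficient spectral sequence, using $\H^n_G(X,\Z(1))=0$ for $n>4$ plus periodicity. Finally, your splitting argument for $0\to\Z/2\to WR(X)\to\Z/4\to0$ assumes that $\bar w_2$ is injective on the kernel ${}_2\H^3_G(X,\Z(1))\subset WR(X)$; Theorem \ref{thm:Hasse} compares $\bar w_2$ with the algebraic Hasse invariant but does not assert this, so that step too needs a justification you have not supplied.
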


\begin{subexamples}
Cases (i)--(iii) are the only possibilities, because the only $G$-module
structures on $\Z^2$ are: $\Z^2$, $\Z(1)^2$, $\Z[G]$ and $\Z\oplus\Z(1)$
(see \cite[2.1]{PW1}), and $G$ cannot act trivially on $H^2(X,\Z)$
(by the Lefschetz Fixed Point Theorem).

Let $X$ be the space $S^2\times S^2$, where $S^2=\C\cup\{\infty\}$.
The involutions sending $(z_1,z_2)$ to $(-\bar{z}_1^{-1},-\bar{z}_2^{-1})$,
$(\bar{z}_2^{-1},-\bar{z}_1^{-1})$ and $(-\bar{z}_1^{-1},z_2)$ 
have no fixed points, and yield the $G$-space structures 
$\Z(1)^2$, $\Z[G]$ and $\Z\oplus\Z(1)$.
These illustrate cases (i), (ii) and (iii) of 
Theorem \ref{formsof P1xP1}.
\end{subexamples}

\begin{proof}[Proof of Theorem \ref{formsof P1xP1}]
Since $\pi_1(X)\!=\!0$, 
we have $\pi_1(X/G)=\Z/2$. Hence $H^1(X/G,\Z/2)$ is $\Z/2$.
It follows from Theorem \ref{WRX-free}  
that $WR(X)$ is an extension of $\Z/4$ by ${_2}\H^3_G(X,\Z(1))$.

To determine ${_2}\H^3_G(X,\Z(1))$, we recall from \ref{Borel} that
\[
\H^*_G(X,\Z)\cong\!H^*(X/G,\Z) \quad\textrm{ and }\quad
\H^*_G(X,\Z[G])\cong H^*(X,\Z).
\]
Hence the long exact cohomology sequence associated to 
the exact sequence $0\to\Z\to\Z[G]\to\Z(1)\to0$
of $G$-modules becomes:
\addtocounter{equation}{-1}
\begin{equation}\label{H4-X/G}
0=H^3(X,\Z) \to \H^3_G(X,\Z(1))\to H^4(X/G,\Z)\to H^4(X,\Z).
\end{equation}
Now consider the spectral sequence
$H^p(G,H^q(X,\Z))\Rightarrow H^{p+q}(X/G,\Z)$
associated to $\XG\to BG$. For convenience, we omit the coefficient
when it is $\Z$.  If $t$ is the nonzero element of 
$E_\infty^{2,0}=E_2^{2,0}=H^2(BG)\cong\Z/2$ then except for 
$p=0$, 
cup product with $t$ is an isomorphism $E_2^{p,q}\cong E_2^{p+2,q}$\!,
commuting with the differentials.

\goodbreak
We now consider the possible $G$-module structures on $H^2(X)\cong\Z^2$.

(i) If $H^2(X)\cong\Z(1)^2$ then $H^4(X)\cong\Z$ as a $G$-module.
For even $p>4$, the sequence
\[ 0\to \Z/2=E_3^{p-6,4} \map{d_3} E_3^{p-3,2}=(\Z/2)^2
\map{d_3} E_3^{p,0}=\Z/2 \to 0
\]
must be exact. By $t$--periodicity, the map 
$d_3^{1,2}:E_3^{1,2}\to E_3^{4,0}$
must be onto. It follows that $H^4(X/G)\to H^4(X)$ is an injection
(with cokernel $\Z/2$). 
The exact sequence \eqref{H4-X/G} yields $\H^3_G(X,\Z(1))=0$.

(ii) If $H^2(X)$ is $\Z[G]$, we have $\Z/2\cong H^4(BG)\smap{\cong}H^4(X/G)$. 
This is because $H^4(X)\cong\Z(1)$ so $H^4(X)^G=0$, and
$H^p(G,H^2(X))=0$ for $p>0$. The result follows from \eqref{H4-X/G}.

(iii) If $H^2(X,\Z)\cong\Z\oplus\Z(1)$ then $H^4(X)\cong\Z(1)$ as a $G$-module.
For even $p\ge6$, the sequences
\begin{align*}
 0=E_3^{p-6,4} \map{d_3} E_3^{p-3,2}=\Z/2 &\map{d_3} E_3^{p,0}=\Z/2 \to 0, \\
0\to \Z/2=E_3^{p-5,4} \map{d_3} E_3^{p-2,2}=\Z/2 &\map{d_3} E_3^{p+1,0}=0
\end{align*}
must be exact.  It follows that $H^4(X/G,\Z)\cong\Z/2$.
\end{proof}
\goodbreak

\medskip
\paragraph{\bf The anisotropic quadric}

Let $Q_2$ be the anisotropic quadric surface $x^2+y^2+z^2+w^3=0$.
It is well known that  the topological $G$-space $X$
underlying $Q_2$ is $S^{3,0}\times S^{3,0}$, where $S^{3,0}$ is the
2-sphere with the antipodal involution. 
(See \cite[VI.5]{Silhol} or \cite[Proof of 2.4]{CTSujatha} for example.)

\begin{corollary}
$W(Q_2)\cong WR(S^{3,0}\times S^{3,0})\cong\Z/4$.
\end{corollary}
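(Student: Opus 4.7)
The plan is to reduce the corollary to two ingredients already established: Theorem \ref{WR:pg=0} for the isomorphism $W(Q_2)\cong WR(V_\top)$, and Theorem \ref{formsof P1xP1} for the computation of $WR(S^{3,0}\times S^{3,0})$. Both inputs apply, but one must check their hypotheses; the only nontrivial verification is identifying which of the three $G$-module structures on $H^2(X,\Z)$ occurs.

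First I would handle the algebraic side. The quadric $Q_2$ is smooth, projective, and geometrically connected (over $\C$ it becomes $\bP^1\times\bP^1$), and since the defining form is anisotropic it has no real points. A smooth quadric surface is rational over $\C$, hence $p_g(Q_2)=0$. All hypotheses of Theorem \ref{WR:pg=0} are satisfied, so $W(Q_2)\smap{\cong}WR(V_\top)=WR(S^{3,0}\times S^{3,0})$.

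Next I would identify the $G$-action on $H^2(X,\Z)$ where $X=S^{3,0}\times S^{3,0}$. The antipodal map on $S^2$ acts as $-1$ on $H^2(S^2,\Z)=\Z$; by the K\"unneth formula and the fact that $G$ acts diagonally, the induced action on $H^2(X,\Z)=H^2(S^2)\otimes H^0(S^2)\oplus H^0(S^2)\otimes H^2(S^2)\cong\Z^2$ is $-1$ on each summand. Thus $H^2(X,\Z)\cong\Z(1)^2$ as a $G$-module, which is precisely case (i) of Theorem \ref{formsof P1xP1}. The diagonal antipodal action is free, so the theorem applies and yields $WR(X)\cong\Z/4$.

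There is no real obstacle here; the only point requiring care is ruling out cases (ii) and (iii) of Theorem \ref{formsof P1xP1}, which is immediate from the computation of the action on top cohomology of each factor. Combining the two isomorphisms gives $W(Q_2)\cong WR(S^{3,0}\times S^{3,0})\cong\Z/4$, as claimed.
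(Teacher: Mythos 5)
Your proposal is correct and follows exactly the paper's route: Theorem \ref{WR:pg=0} (via $p_g=0$ and the absence of real points) for $W(Q_2)\cong WR(V_\top)$, and Theorem \ref{formsof P1xP1}(i) for $WR(S^{3,0}\times S^{3,0})\cong\Z/4$. Your verification that the diagonal antipodal action gives $H^2(X,\Z)\cong\Z(1)^2$ is a correct and welcome elaboration of a step the paper leaves implicit.
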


\begin{proof}
This follows from Theorem \ref{formsof P1xP1}(i) and Theorem \ref{WR:pg=0}.
\end{proof}

\begin{subremark}\label{rem-correction}
The fact that $W(Q_2)\cong\Z/4$ is due to
Parimala \cite[p.\,92]{Parimala2}.
See Theorem \ref{W=WR=Z/4}  
for a short algebraic proof.
\end{subremark}


\medskip
\paragraph{\bf Rational surfaces}
An  algebraic surface defined over $\R$ is called 
a {\it real rational surface} if 
$V_\C$ is birational to the projective plane over $\C$.
For example, $Q_2$ is a rational surface.

If $V$ is a real rational surface with no real points,
then $p_g=0$, and $W(V)\cong WR(V_\top)$ by Theorem \ref{WR:pg=0}.

\begin{theorem}\label{W=WR=Z/4}
Let $V$ be a real rational surface with no real points.
Then $W(V)~\map{\cong}~ WR(V_\top) \cong \Z/4$.
\end{theorem}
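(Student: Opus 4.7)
The plan is to combine the birational invariance of $WR$ with the previously computed value $WR((Q_2)_\top)\cong\Z/4$. Two ingredients are needed, both already in hand.

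First, since $V$ is rational, $V_\C$ is birational to $\bP^2_\C$. Because the geometric genus is a birational invariant of smooth projective surfaces,
\[ p_g(V)=p_g(\bP^2_\C)=0. \]
The surface $V$ is geometrically connected (as $V_\C$ is irreducible) and by hypothesis $V(\R)=\emptyset$, so Theorem~\ref{WR:pg=0} applies and yields
\[ W(V)\map{\cong} WR(V_\top), \]
reducing the problem to computing $WR(V_\top)$.

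Second, I would invoke the classification of real rational surfaces due originally to Comessatti (and refined by Iskovskikh): any smooth projective real rational surface with no real points is (real) birationally equivalent to the anisotropic quadric $Q_2$. Applying Theorem~\ref{birational} then gives $WR(V_\top)\cong WR((Q_2)_\top)$, and the corollary immediately preceding the theorem identifies this latter group with $\Z/4$. Composing the two isomorphisms produces $W(V)\cong WR(V_\top)\cong\Z/4$.

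The main delicate point is the reduction to $Q_2$: over $\C$ every rational surface is birational to $\bP^2$, but the analogous statement over $\R$ fails in general (for instance, $\bP^2_\R$ is not birational to $Q_2$ over $\R$, as Example~\ref{plane} indicates). It is precisely the hypothesis $V(\R)=\emptyset$ that pins $V$ into the birational class of $Q_2$ among real rational surfaces, and the birational modifications needed to pass from $V$ to $Q_2$ are blow-ups and blow-downs at conjugate pairs of complex points, which is exactly the second case handled in the proof of Theorem~\ref{blowup}. One could alternatively bypass the appeal to Comessatti–Iskovskikh by checking, case by case, that the $G$-action on $H^2(V_\top,\Z)$ forces case~(i) of Theorem~\ref{formsof P1xP1}, but the birational approach is cleaner.
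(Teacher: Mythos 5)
Your argument reaches the right conclusion but by a genuinely different route, and essentially all of its weight rests on the classification you invoke in the second step. The first step (reduction $W(V)\cong WR(V_\top)$ via $p_g=0$ and Theorem \ref{WR:pg=0}) agrees with the paper. But the paper does \emph{not} reduce to $Q_2$ by birational geometry: it computes $WR(V_\top)$ directly, using only the $G$-module structure $H^2(V_\top,\Z)\cong\Z(1)^2\oplus\Z[G]^c$ of Lemma \ref{H2-c} (whose input is just the classical isomorphism $\Pic(V_\C)\cong\Z^2\oplus\Z[G]^c$) and then rerunning the Borel spectral sequence of Theorem \ref{formsof P1xP1}(i) to get ${}_2\H^3_G(V_\top,\Z(1))=0$; it also recomputes $W(V)$ on the algebraic side, correcting a typo in Sujatha's published argument. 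By contrast, the statement you import --- every geometrically rational real surface with $V(\R)=\emptyset$ is $\R$-birational to the anisotropic quadric --- is true but is \emph{not} the standard form of Comessatti's theorem (which concerns connectedness of $V(\R)$ and $\R$-rationality), and extracting it from the Comessatti--Iskovskikh classification of minimal models requires a real case analysis: one must rule out minimal del Pezzo surfaces of degree $1$ (they always have a real point, the base point of $|-K|$) and of degree $2$ with empty real locus (minimality forces the real structure to act as $-1$ on $E_7$, hence all $28$ bitangents of the branch quartic to be real, hence the quartic to have four ovals and the surface a nonempty real locus), and one must observe that a relatively minimal conic bundle without real points has no singular fibers (conjugate components of conjugate degenerate fibers are disjoint and contractible) and is therefore birational to $Q_1\times\bP^1$, hence to $Q_2$. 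So your proof is complete only once you supply a precise reference or proof for this classification; note also that it does not actually avoid the spectral-sequence computation, since the value $WR(Q_{2,\top})\cong\Z/4$ that you quote is itself obtained by that computation --- it merely localizes the computation to one surface and then spreads it out by Theorem \ref{birational}, at the cost of a substantial input from real birational geometry that the paper's uniform cohomological argument does not need.
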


\begin{proof}
We calculate $W(V)$ and $WR(V_\top)$ separately,
for comparison purposes.
We begin with the algebraic calculation of $W(V)$.

Sujatha proved in her thesis that $W(V)\cong\Z/4$.
Unfortunately, the published version  \cite[4.2]{Sujatha}
asserts that $W(V) = \Z/4\oplus\Z/2$, due to a typo in the proof
(on top of p.\,100 in {\it loc.\,cit.}): the incorrect formula 
$\Pic(V_\C)^G\cong\Pic(V)\oplus\Z/2$, should say
that the torsionfree group $\Pic(V_\C)^G$ is a nontrivial extension
of $\Br(\R)\cong\Z/2$ by $\Pic(V)$:
\[ 0 \to \Pic(V) \to \Pic(V_\C)^G\ \map{d_3}\ \Br(\R)\to 0. \]
With this correction, Sujatha's proof of \cite[4.2]{Sujatha}
yields 
\[\dim\!H^2(V,\Z/2)\!=\dim(\Pic V)/2 \quad\mathrm{and}\quad
{_2}\!\Br(V)=0
\]
(so $k=0$ in the notation of \cite{Sujatha}), 
and hence $W(V)\cong\Z/4$.

We turn to the topological calculation, setting $X\!=\!V_\top$. 
In this case, $H^1(X,\Z)\!=\!H^3(X,\Z)\!=\!0$,
and $H^2(X,\Z)$ is given by Lemma \ref{H2-c} below.
%
The spectral sequence 
$E_2^{p,q}=H^p(G,H^q(X,\Z))\Rightarrow\H_G^*(X,\Z)$
agrees with the spectral sequence in the proof of 
Theorem \ref{formsof P1xP1}(i), except for the $E_2^{0,2}$ term.  
As this term played no role in the proof of {\it loc.\,cit.},
the same argument (using \ref{WR:pg=0}) 
applies to yield $WR(X)\cong\Z/4$.
\end{proof}

\begin{lemma}\label{H2-c}
Let $V$ be a real rational surface with no real points.
Then there is an integer $c$ such that
$H^2(V_\top,\Z)\cong \Z(1)^2\oplus \Z[G]^c$.
\end{lemma}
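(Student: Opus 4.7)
}
The plan is to reduce to a minimal model by the Enriques--Comessatti--Silhol birational theory, verify the claim for the minimal model, and then add one $\Z[G]$-summand per blow-up. Because $V(\R)=\emptyset$, every blow-down from $V$ must contract a pair of complex-conjugate $(-1)$-curves (contracting a real $Q_1$-shaped $(-1)$-curve would create a real point on the blow-down); hence a minimal model $V_0$ in the birational class of $V$ still satisfies $V_0(\R)=\emptyset$, and $V$ is obtained from $V_0$ by a finite sequence of blow-ups at conjugate pairs of points only. By the Comessatti--Silhol classification (see \cite[Ch.\,VI]{Silhol}), the only such $V_0$ to consider is the anisotropic quadric $Q_2$ (with the possible additional cases of anisotropic conic bundles over $Q_1$ reducing to $Q_2$ by a further birational step).

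I would first verify the base case $V_0 = Q_2$. Topologically $Q_2(\C) = S^{3,0}\times S^{3,0} = S^2\times S^2$, with $G$ acting by the antipodal involution on each factor (see \cite[VI.5]{Silhol}; this is used already in the discussion preceding the lemma). The antipodal map on $S^2$ is orientation-reversing, so it acts as $-1$ on $H^2(S^2,\Z)=\Z$. By the K\"unneth formula,
\[
H^2(Q_2(\C),\Z) \cong H^2(S^2,\Z)\oplus H^2(S^2,\Z) \cong \Z(1)\oplus\Z(1) = \Z(1)^2,
\]
which is the claim with $c=0$.

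Next I would check the blow-up step. If $W'\to W$ is the blow-up of a smooth real surface $W$ at a pair of complex-conjugate points $\{p,\bar p\}$, then topologically $W'_\top$ is obtained from $W_\top$ by removing an equivariant pair of 4-disks and gluing in a pair of tubular neighborhoods of $\C\bP^1\cong S^2$, interchanged by $G$. The standard blow-up formula (which is also the basis of the Mayer--Vietoris computation in the proof of Theorem~\ref{blowup}) gives a $G$-equivariant splitting
\[
H^2(W'_\top,\Z) \cong H^2(W_\top,\Z)\oplus \Z\langle [E_p]\rangle\oplus\Z\langle[E_{\bar p}]\rangle,
\]
and since $G$ swaps $[E_p]\leftrightarrow[E_{\bar p}]$, the last two summands form a copy of $\Z[G]$. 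Induction on the number of blow-ups then yields $H^2(V_\top,\Z)\cong\Z(1)^2\oplus\Z[G]^c$, as required.

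The main obstacle is the classification step: I need to know that the minimal model is (birationally equivalent through real-pointless surfaces to) $Q_2$ rather than an arbitrary anisotropic conic bundle. An alternative route that avoids the classification would be to compute $\hat H^i(G,H^2(V_\top,\Z))$ for $i=0,1$ directly using the Cartan--Leray spectral sequence for the free quotient $V_\top\to V_\top/G$ (which is a closed orientable 4-manifold since complex conjugation is orientation-preserving on a complex surface), combined with simple-connectivity of $V_\top$ and the structure theorem for $\Z[G]$-lattices; one would need to identify $\hat H^0=0$ and $\hat H^{-1}=(\Z/2)^2$ from the geometry of the cover, which is the delicate point.
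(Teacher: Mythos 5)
Your base case and your induction step are both fine: on a surface with no real points there are no geometrically irreducible real $(-1)$-curves (a $(-1)$-curve isomorphic to $Q_1$ would need a normal bundle of odd degree, which does not exist in $\Pic(Q_1)$, and a real $(-1)$-curve isomorphic to $\bP^1_\R$ would carry real points), so passage to a minimal model only contracts conjugate pairs; blowing up a conjugate pair does add a $G$-equivariant summand spanned by the two exceptional classes, which is a copy of $\Z[G]$ (the sign coming from orientation reversal on the exceptional spheres is harmless, since $\Z[G]\otimes\Z(1)\cong\Z[G]$); and for $Q_2$ the antipodal-times-antipodal action does give $H^2\cong\Z(1)^2$. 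The genuine gap is exactly the one you flag: the classification of the possible minimal models. It is \emph{not} true that the only minimal geometrically rational real surface without real points is $Q_2$; minimal conic bundles over $Q_1$ (with singular fibres in conjugate pairs, arranged so that no Galois-stable set of fibre components can be contracted) occur in the standard classification, and your parenthetical "reducing to $Q_2$ by a further birational step" cannot be repaired as stated: a minimal surface admits no further blow-down, and an arbitrary birational equivalence would not feed your induction, which needs $V$ to be reached from the model by a chain of blow-ups. So as written the argument only proves the lemma for surfaces whose minimal model is $Q_2$; to complete it you would have to either compute the $G$-module $H^2$ of a minimal conic bundle over $Q_1$ directly (fibre class and a section-type class contribute $\Z(1)^2$, conjugate pairs of components of singular fibres contribute $\Z[G]$'s) or carry out in detail the Tate-cohomology alternative you sketch, whose key identifications you also leave open.

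For comparison, the paper avoids the minimal model program entirely: it quotes the known $G$-module structure $\Pic(V_\C)\cong\Z^2\oplus\Z[G]^c$ (Silhol, Parimala--Sujatha) and then, using $H^1_\an(X,\cO)=H^2_\an(X,\cO)=0$ for a rational surface, reads off from the equivariant exponential sequence that $H^2(X,\Z)\cong\Pic(V_\C)\otimes\Z(1)\cong\Z(1)^2\oplus\Z[G]^c$. In effect the classification work you would need is absorbed into the cited Picard-group statement; if you prefer a self-contained geometric proof along your lines, you must supply the conic-bundle case rather than assume it away.
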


\begin{proof}
It is well known that 
$\Pic(V_\C)\cong\Z^2\oplus \Z[G]^c$ as a $G$-module;
see \cite{ParimalaSujatha} or \cite[p.\,46]{Silhol}. 
Since $H^1_\an(X,\cO)=H^2_\an(X,\cO)=0$, the equivariant 
exponential sequence $0\to\Z(1)\to\cO_\an\to\cO_\an^\times\to0$ 
on $X$ implies that as a $G$-module:
\[
\Pic(V_\C) \cong  H^1_\an(X,\cO_\an^\times)
\cong H^2_\an(X,\Z(1)) \cong H^2(X,\Z)\otimes\Z(1).
\qedhere\]
\end{proof}

\begin{subremark}\label{Comessatti}
Every finitely generated torsionfree $G$-module $A$ is isomorphic
to $\Z^a\oplus\Z(1)^b\oplus \Z[G]^c$ for some integers $a,b,c$.
The integer $c$ is called the {\it Comesssatti characteristic} of $A$,
since Comessatti used it to study abelian varieties over $\R$ in
the 1920's; see \cite{C1,C2}.
The integer $c$ in Lemma \ref{H2-c}
 is called the {\it Comessatti characteristic} of $H^2(X)$.
The Comessatti character of $H^d(V_\top,\Z)$, $d=\dim V$,
has played a major role in computations by Silhol \cite[IV.1]{Silhol},
Parimala--Sujatha \cite{ParimalaSujatha}
and the authors \cite[1.1]{PW}, \cite[4.2]{KSW}. 
\end{subremark}

\medskip
\paragraph{\bf Ruled surfaces}
Recall that a {\it ruled surface} over a curve $C$ is a variety of the form
$V=\bP(E)$, where $E$ is a locally free sheaf of rank~2 over $C$.
For example, if $E$ is free then $V=C\times\bP^1$.
Ruled surfaces have $p_g=0$, so Theorem \ref{WR:pg=0} yields
$W(V)\cong WR(V_\top)$ if $C$ has no $\R$-points, since
in that case $V$ has no $\R$-points either.

\begin{theorem}\label{CxP1-free}
Let $C$ be a smooth, geometrically connected projective curve over $\R$ 
of genus $g$.  If $C$ has no real points and 
$V$ is a ruled surface over $C$, then
\[
W(V) \cong WR(V_\top) \cong WR(C_\top) \cong\Z/4\oplus(\Z/2)^g. 
\]
\end{theorem}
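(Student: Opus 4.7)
The plan is to pass from algebra to topology via Theorem \ref{WR:pg=0}, then compute $WR(V_\top)$ using Theorem \ref{WRX-free} combined with Kahn's projective bundle formula, Theorem \ref{PBF}.

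First, since the ruling $\bP(E)\to C$ has a $\bP^1$-fiber over every real point of $C$ and $C(\R)=\emptyset$, we get $V(\R)=\emptyset$, so $G$ acts freely on $V_\top$. Ruled surfaces have $p_g=0$, so Theorem \ref{WR:pg=0} gives $W(V)\cong WR(V_\top)$; the final isomorphism $WR(C_\top)\cong \Z/4\oplus(\Z/2)^g$ is Corollary \ref{WR(curve)}. Thus everything reduces to computing $WR(V_\top)$.

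Since $V_\top$ is a connected 4-dimensional $G$-complex with free action, Theorem \ref{WRX-free} supplies an extension
\[
0 \to {}_2\H^3_G(V_\top,\Z(1)) \to WR(V_\top) \to \Z/4 \times \tH^1(V_\top/G,\Z/2) \to 0,
\]
so it suffices to evaluate the two outer terms. I apply Theorem \ref{PBF} to $\bP(E)\to C$ to obtain $\H^1_G(V_\top,\Z/2)\cong \H^1_G(C_\top,\Z/2)$ and $\H^3_G(V_\top,\Z(1))\cong \H^3_G(C_\top,\Z(1))\oplus \H^1_G(C_\top,\Z)$. Now $C_\top$ is a genus-$g$ Riemann surface with free orientation-reversing $G$-action (complex conjugation is orientation-reversing on a Riemann surface), so $N:=C_\top/G$ is a closed non-orientable surface of Euler characteristic $1-g$; hence $H^1(N,\Z/2)\cong (\Z/2)^{g+1}$, $H^1(N,\Z)\cong \Z^g$ is torsion-free, and $H^n(N,-)=0$ for $n\ge 3$. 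Together with Remark \ref{Xcovers} (which ensures $[-1]$ is nonzero), these computations yield $\tH^1(V_\top/G,\Z/2)\cong (\Z/2)^g$ and ${}_2\H^3_G(V_\top,\Z(1))=0$, so substitution into the extension gives $WR(V_\top)\cong \Z/4\oplus (\Z/2)^g$, as required.

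The main obstacle is bookkeeping with the coefficient systems in Kahn's formula (especially the twist by $\Z(1)$) and identifying the Borel cohomology of $C_\top$ with singular cohomology of the non-orientable quotient $N$ using the freeness of the action; once this is in place, nothing further is needed beyond standard facts about non-orientable surfaces.
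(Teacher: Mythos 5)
Your proposal is correct and follows essentially the same route as the paper's proof: reduce to $WR(V_\top)$ via Theorem \ref{WR:pg=0}, then feed Kahn's projective bundle formula (Theorem \ref{PBF}) into the extension of Theorem \ref{WRX-free}, using that $\H^3_G(C_\top,\Z(1))=0$ and $\H^1_G(C_\top,\Z)$ is torsionfree because the free quotient $C_\top/G$ is a closed surface. The only cosmetic difference is that you evaluate $H^1(C_\top/G,\Z/2)\cong(\Z/2)^{g+1}$ directly from the non-orientable quotient, where the paper instead identifies $WR(V_\top)\cong WR(C_\top)$ and cites Corollary \ref{WR(curve)} (and also records a separate purely algebraic proof).
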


\begin{proof}
It suffices to show that $WR(V_\top)\!\cong WR(C_\top)$, because
$W(C)\cong WR(C_\top)\cong\Z/4\oplus(\Z/2)^g$ by Corollary \ref{WR(curve)}.
Because $\H^3_G(C_\top,\\ \Z(1))=0$, we see from the
Projective Bundle Theorem \ref{PBF} that
$$\H^3_G(V_\top,\Z(1))\cong \H^1_G(C_\top,\Z)\cong H^1(C_\top/G,\Z),$$
which is torsionfree. 
Theorem \ref{WRX-free} yields $WR(V_\top)\cong WR(C_\top)$.
\end{proof}

\begin{proof}[Algebraic proof]
It suffices to show that $W(V)\cong W(C)$.
By the algebraic Projective Bundle Theorem 
(which follows from \ref{PBF} and \cite{Cox}),
$H^1_\et(V,\Z/2)=H^1_\et(C,\Z/2)$
and $H^2_\et(V,\Z/2)=H^2_\et(C,\Z/2)\oplus\Z/2$.
In addition, $\Pic(V)\cong\Pic(C)\oplus\Z$. 
Hence $\Br(V)=\Br(C)=0$, and  $W(V)\cong W(C)$
by Corollary \ref{W/I2}.  
\end{proof}

\begin{subremark}
If $C'$ is the (affine) curve obtained by removing $n\ge1$ 
(complex) points from the projective curve $C$, and
$V'=C'\times\bP^1$, then we still have 
$W(V')\cong WR(V')\cong W(C')$, but now (using Remark \ref{affine curve})
$W(V')\cong W(V)\oplus(\Z/2)^{n-1}$.
This follows from Corollary \ref{WR:pg=0}, since $\Br(V')=\Br(C')=0$
(see \cite[3.6]{PW}) and $H^1_\et(V',\Z/2)\cong H^1_\et(C',\Z/2)$.
\end{subremark}

Let $Q_1$ denote the projective curve $x^2+y^2+z^2=0$, which is
a real form of $\C\bP^1$; its underlying $G$-space $Q_{1,\top}$ is 
$S^{3,0}$ ($S^2$ with the antipodal involution), and 
$Q_{1,\top}/G$ is $\R\bP^2$.
Thus $H^1(Q_1,\Z/2)\cong\Z/2$.

Let $C$ be a smooth (geometrically connected)
projective curve over $\R$, and consider the real form 
$V=C\!\times\!Q_1$ of the ruled surface $C_\C\!\times\!\bP^1_\C$.

\goodbreak
\begin{proposition}\label{CxQ}
When $V=C\times Q_1$, and $C(\R)$ has $\nu$ components, 
\[ W(V)\map{\cong} WR(V_\top) \cong \begin{cases}
\Z/4\oplus(\Z/2)^g, & \nu=0; \\ 
\Z/4\oplus(\Z/2)^g\oplus(\Z/4)^{\nu-1}, & \nu>0.
\end{cases}\]
\end{proposition}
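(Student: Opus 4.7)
The plan is to combine Theorem \ref{WR:pg=0} with a direct topological computation of $WR(V_\top)$ via Theorem \ref{WRX-free}. Since $Q_1(\R)=\emptyset$, $V(\R)=\emptyset$, and since $g(Q_1)=0$, a K\"unneth argument on $H^0(X,\Omega^2)$ shows $p_g(V)=0$. Theorem \ref{WR:pg=0} then provides $W(V)\xrightarrow{\cong} WR(V_\top)$, reducing the algebraic statement to a topological one. Because $G$ acts freely on $S^{3,0}$, it acts freely on $V_\top=C_\top\times S^{3,0}$, so Theorem \ref{WRX-free} yields
\[ 0 \to {}_2\H^3_G(V_\top,\Z(1)) \to WR(V_\top) \to \Z/4 \oplus \tH^1(V_\top/G,\Z/2) \to 0. \]

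To evaluate the flanking terms, I would use that the principal $G$-bundle $S^{3,0}\to\R\bP^2$ exhibits $V_\top/G$ as a fiber bundle over $\R\bP^2$ with fiber $C_\top$, whose monodromy is the given $G$-action on $C_\top$. The corresponding Serre spectral sequence with $\Z/2$ and $\Z(1)$ coefficients, combined with Cox's theorem, the Lyndon--Hochschild--Serre spectral sequence, and the long exact sequence associated to $0\to\Z\to\Z[G]\to\Z(1)\to0$, reduces the computations to values on $C_\top$, which are controlled by the known $WR(C_\top)$ of Corollary \ref{WR(curve)}.

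For $\nu=0$, $G$ acts freely on $C_\top$ as well, and $C_\top/G$ is a closed non-orientable surface of non-orientable genus $g+1$. The 2-dimensionality of the base forces ${}_2\H^3_G(V_\top,\Z(1))=0$, while a direct count gives $\tH^1(V_\top/G,\Z/2)\cong(\Z/2)^g$ after modding out $[-1]$ (nonzero by Remark \ref{Xcovers}). The extension is trivial and one recovers $WR(V_\top)\cong\Z/4\oplus(\Z/2)^g$, matching the value of $WR(C_\top)$.

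For $\nu>0$ one finds $\tH^1(V_\top/G,\Z/2)\cong(\Z/2)^{g+\nu-1}$ (using Kummer theory and $|{}_2\Pic(C)|=2^{g+\nu-1}$) and ${}_2\H^3_G(V_\top,\Z(1))\cong(\Z/2)^{\nu-1}$. The main obstacle is then showing that the extension
\[ 0 \to (\Z/2)^{\nu-1} \to WR(V_\top) \to \Z/4 \oplus (\Z/2)^{g+\nu-1} \to 0 \]
is maximally non-split, pairing $\nu-1$ summands on each side into $\Z/4$'s. To detect the non-splitting I would use the $\nu$ evaluations $W(V)\to W(\{p\}\times Q_1)=W(Q_1)\cong\Z/4$ at the real points $p$ of $C$. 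These combine into a ring homomorphism $W(V)\to(\Z/4)^\nu$ containing the diagonal image of $W(\R)=\Z$, and an order count shows the image surjects; this yields $\nu$ elements of order $4$ in $W(V)$. Exactly one $\Z/4$ is accounted for by the $\Z/4$ factor of the exact sequence, forcing the remaining $\nu-1$ independent summands of order~$4$ to come from nontrivial extensions, producing the claimed $\Z/4\oplus(\Z/2)^g\oplus(\Z/4)^{\nu-1}$.
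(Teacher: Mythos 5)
Your reduction is sound as far as it goes: $V(\R)=\emptyset$ and $p_g(V)=0$, so Theorem \ref{WR:pg=0} gives $W(V)\cong WR(V_\top)$; the $G$-action on $V_\top=C_\top\times S^{3,0}$ is free, so Theorem \ref{WRX-free} applies; and the two flanking terms do have the values you assert. (For ${}_2\H^3_G(V_\top,\Z(1))$, though, the honest justification is not ``$2$-dimensionality of the base'' --- $V_\top/G$ is a $4$-manifold and its $H^3$ with local coefficients has no dimension reason to vanish --- but the identification ${}_\tors\H_G^3(V_\top,\Z(1))\cong\Br(V)$ from \eqref{Krasnov} with $\rho_0=0$, together with $\Br(V)=0$ for $\nu=0$ and $\Br(V)\cong(\Z/2)^{\nu-1}$ for $\nu>0$ from the Leray sequence of $V\to C$.) The genuine gap is the final step for $\nu>0$, where you resolve the extension by restricting to the fibres $Q_1$ over real points of $C$. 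This cannot work: the kernel of your extension is, on the algebraic side, $I_2(V)\cong{}_2\Br(V)$ (Corollary \ref{W/I2}), and each restriction $W(V)\to W(\{p\}\times Q_1)=W(Q_1)\cong\Z/4$ kills $I_2(V)$, because the discriminant is natural and $I_2(Q_1)=0$ (indeed ${}_2\Br(Q_1)=0$). Hence your homomorphism $W(V)\to(\Z/4)^\nu$ factors through the quotient $\Z/4\oplus(\Z/2)^{g+\nu-1}$ and takes exactly the same values whether the extension is split or not; it is structurally blind to the extension class.

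The intermediate claims also fail: since the rank mod~$2$ of a form is constant on the connected $V$, the image of $W(V)\to(\Z/4)^\nu$ lies in the proper subgroup of vectors whose coordinates all have the same parity, so ``the image surjects'' is false for $\nu\ge2$; and $\nu$ elements of order $4$ do not force $\nu$ cyclic summands of order $4$ --- one needs their doubles to be independent, whereas the doubles of all your evaluation-detected classes have the same image $(2,\dots,2)$. What actually resolves the extension is to show that every element of the kernel ${}_2\Br(V)\cong(\Z/2)^{\nu-1}$ is twice an element of $W(V)$, i.e.\ that $\dim_{\Z/2}2W(V)=\nu$: multiplication by $2$ induces, via the discriminant and Hasse invariant (Lemma \ref{In/In+1}), the cup product with $\{-1\}$ from $H^1_\et(V,\Z/2)\cong H^1_\et(C,\Z/2)$ to ${}_2\Br(V)\cong{}_2\Br(C)/\langle(-1,-1)_C\rangle$, and this cup product is surjective for curves by Colliot-Th\'el\`ene--Parimala \cite{CTParimala}. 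Combined with Lemma \ref{exp.8} (the projection $V\to Q_1$ shows $W(V)$ has exponent $4$), this pins down $W(V)\cong\Z/4\oplus(\Z/2)^g\oplus(\Z/4)^{\nu-1}$; this is in effect how the stated value is obtained on the algebraic side, Theorem \ref{WR:pg=0} being used only for the isomorphism $W(V)\cong WR(V_\top)$.
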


\begin{proof}
Since $p_g=0$, this follows from Theorem \ref{WR:pg=0}.
\end{proof}

\begin{examples}
Both $Q_1\times Q_1$ and the ruled surface $Q_1\times\bP^1$
are surfaces with no real points, and $p_g=0$.
Their underlying $G$-space is $X=S^2\times S^2$.
By Theorem \ref{WR:pg=0}, 
$W(V)\cong WR(V_\top)\cong\Z/4$.
\end{examples}


\medskip\goodbreak
\section{Real points}\label{sec:R-points}

If $X^G\ne\emptyset$ and $\dim(X^G)\le3$ then $WR(X)$
is the sum of $\Z^\nu$ (given by the signature,
as described in Lemma \ref{signature}) and a finite 2--group.
This follows from \cite[7.4]{KSW}, since $KO(X^G)$ has this structure.
In particular, if $V$ is a real algebraic surface 
and $V(\R)$ has $\nu>0$ components, then both 
$W(V)$ and $WR(V_\top)$ are the sum of $\Z^\nu$ and a finite 2--group.

To describe the torsion, we use the cellular filtrations on 
$KR$ and $KO_G$ from the Bredon spectral sequence \eqref{Bredon-ss},
defining $F_p WR(X)$ as the image of $F_pKO_G(X)\to WR(X)$.
Thus $F_1WR(X)$ is the torsion subgroup and $WR(X)/F_1\cong\Z^\nu$.

\begin{lemma}
Suppose that $\dim(X)\!<\!8$ and $\dim(X^G)\le3$. Then $F_3WR(X)=0$.
\end{lemma}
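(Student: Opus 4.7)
The plan is to run the Bredon spectral sequence $E_2^{p,q} = H^p_G(X; KO_G^q) \Rightarrow KO_G^{p+q}(X)$ and its analogue for $KR$, with the cellular filtration $F_\bullet$. The hyperbolic map $H \colon KR(X) \to KO_G(X)$ is filtration-preserving, so by the definition $F_p WR(X) = \mathrm{image}(F_p KO_G(X) \to WR(X))$, the vanishing $F_3 WR(X) = 0$ is equivalent to the inclusion $F_3 KO_G(X) \subseteq H(KR(X))$. I would prove this by downward induction on $p$, with trivial base case $F_8 KO_G(X) = 0$ (since $\dim X < 8$, all relevant Bredon cohomology vanishes). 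The inductive step reduces to checking that $E_\infty^{p,-p}(KR) \to E_\infty^{p,-p}(KO_G)$ is surjective for each $3 \le p \le 7$.

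By Lemma \ref{H^q_G}, $E_2^{p,-p}$ decomposes into a fixed-point summand $H^p(X^G; KO^{-p})$ and a free summand; for $KO_G$ the free summand is $H^p(X/G; KO^{-p})$, while for $KR$ it is $H^p(X/G; KU^{-p})$ (since $KR^q(G/G)=KO^q(\pt)$ by Atiyah and $KR^q(G/e)=KU^q(\pt)$). The comparison map induced by $H$ is the identity on the fixed summand and the realification $KU^{-p} \to KO^{-p}$ on the free one. For $p = 3$ both summands vanish because $KO^{-3}(\pt)=0$; for $p = 5, 6, 7$ both summands vanish because $KO^{-p}(\pt) = 0$ in these degrees; so in these four cases $E_2^{p,-p}(KO_G) = 0$ and surjectivity is vacuous. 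The fixed summand also vanishes for $p = 4$, thanks to the hypothesis $\dim X^G \le 3$.

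The only case with content is $p = 4$, where $E_2^{4,-4}(KO_G) = H^4(X/G; \Z)$, $E_2^{4,-4}(KR) = H^4(X/G; \Z)$, and the comparison is induced by realification $KU^{-4}(\pt) = \Z \to KO^{-4}(\pt) = \Z$, which is an isomorphism (Bott: $r(u^2) = \alpha$). All outgoing differentials from position $(4,-4)$ land in groups $E_r^{4+r,-4-r+1}$ whose coefficient group is $KO^{-p}(\pt)$ with $p \in \{5,6,7\}$ or are killed by the dimension bound, hence zero. Thus $E_\infty^{4,-4}$ is the quotient of $E_2^{4,-4}$ by the images of incoming differentials. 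Since the $E_2$ map is an isomorphism and compatibility of the two spectral sequences sends incoming $KR$-differential images into incoming $KO_G$-differential images, the induced map on quotients $E_\infty^{4,-4}(KR) \to E_\infty^{4,-4}(KO_G)$ is surjective by a one-line diagram chase (lift $\bar b \in E_\infty^{4,-4}(KO_G)$ to $b\in E_2$, write $b = f(a)$, and push down to $\bar a$).

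The main obstacle is bridging the $E_2$-level calculation with the $E_\infty$-level surjectivity in the single nontrivial case $p = 4$. This requires Lemma \ref{H^q_G} to give the claimed summand decomposition for both $KR$ and $KO_G$ with the identified comparison map, plus the Bott-periodicity fact that realification is an isomorphism in degree $-4$. Once these are in hand, the inductive step closes at every $p$ in the range, giving $F_3 WR(X) = 0$.
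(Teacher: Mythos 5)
Your overall strategy (downward induction on the filtration, reducing to surjectivity of $E_\infty^{p,-p}(KR)\to E_\infty^{p,-p}(KO_G)$ for $3\le p\le 7$, with the cases $p=3,5,6,7$ vacuous because $KO^{-p}(\pt)=0$ kills $E_2^{p,-p}(KO_G)$ by Lemma \ref{H^q_G}) is exactly the paper's argument. The gap is in the one case with content, $p=4$: you invoke Lemma \ref{H^q_G} for the $KR$-spectral sequence, claiming $E_2^{p,-p}(KR)\cong H^p(X^G,KO^{-p})\oplus H^p(X/G,KU^{-p})$ with the comparison map given by realification. Lemma \ref{H^q_G} only applies to $KO_G$, and the claimed decomposition is false for $KR$: the Bredon coefficient system $KR^{-4}$ is $\Z\smap{2}\Z$ (Example \ref{ex:coeff}(b)), which has no summand of the form $(\Z\to0)$, so already for $X=\pt$ one gets $H^0_G(\pt;KR^{-4})\cong\Z$, not $\Z^2$. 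Consequently neither the identification $E_2^{4,-4}(KR)\cong H^4(X/G,\Z)$ nor the description of the comparison map as the realification isomorphism is justified: the composite of coefficient systems $KR^{-4}\to KO_G^{-4}\smap{+}\Z(0)$ is multiplication by $2$ on the fixed-point part, with cokernel the system $(\Z/2\to0)$, so surjectivity on $H^4_G$ is \emph{not} automatic — the obstruction is $H^4(X^G,\Z/2)$. This is precisely what the paper's Lemma \ref{H4-iso} supplies: from the exact sequence $0\to KR^{-4}\to\Z(0)\to(\Z/2\!\to\!0)\to0$ one gets that $E_2^{4,-4}(KR)\to E_2^{4,-4}(KO_G)\cong H^4(X/G,\Z)$ is onto because $\dim X^G\le3$ kills $H^4(X^G,\Z/2)$. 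Note that this is where the hypothesis on $X^G$ actually enters; in your version it is spent on a fixed-point summand of $E_2^{4,-4}(KR)$ that does not exist.

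A secondary instance of the same confusion: when arguing that all outgoing differentials from $(4,-4)$ vanish, you again describe the $KR$-coefficients as $KO^{-p}(\pt)$; for the $KR$ spectral sequence the relevant systems are $KR^{-5}=0$ but $KR^{-6}=(0\!\to\!\Z(1))\ne0$, so the reason you give does not cover the $d_3$ on the $KR$ side (the paper handles this point by the same brief appeal to $\dim X<8$, so this is a shared imprecision rather than a defect specific to your write-up). The remaining parts — the filtration bookkeeping, the vacuous cases, and the $E_2$-to-$E_\infty$ diagram chase — are fine and coincide with the paper's proof; what is missing is the substitute for Lemma \ref{H4-iso} in the $p=4$ step.
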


\begin{proof}
The groups $E_2^{p,-p}(KO_G)$ are zero if $p=3,5,6,7$. 
The map $E_2^{4,-4}(KR)\to\!E_2^{4,-4}(KO_G)$ is onto by Lemma \ref{H4-iso}.
Since $\dim X\!<\!8$, $E_\infty^{4,-4}$ is a quotient of $E_2^{4,-4}$
for both the $KR$ and $KO_G$ cases, $E_\infty^{4,-4}(KR)$ maps onto
$E_\infty^{4,-4}(KO_G)$, and hence  $F_3WR(X)=0$.
\end{proof}

Since $F_1/F_2\cong E_\infty^{1,-1}$ for both $KO_G$ and $KR$,
we have an exact sequence
\[
E_\infty^{1,-1}(KR)\ \map{H_\infty}\ E_\infty^{1,-1}(KO_G) \to F_1WR(X)/F_2WR(X) \to0.
\]
When $\dim(X)\le4$ and $\dim(X^G)\le2$,
this fits into a diagram with exact rows, in which
$d_2$ and $\bar{d}_2$ are the $E_2$ differentials: 
\begin{equation}\label{eq:Fil12}
\xymatrix@R=1.5em{
0 \to E_\infty^{1,-1}(KR) \ar[d]^{H_\infty}\ar[r] & H^1(X^G,\Z/2)
 \ar[d]^{\textrm{into}}\ar[r]^{d_2} & {_2}H^3_G(X,\Z(1)) \ar[d]^{\mathrm{mod}~2} \\
0 \to E_\infty^{1,-1}(KO_G) \ar[r]\ar[d]^{\textrm{onto}}   & 
   H^1_G(X;KO_G^{-1}) \ar[r]^-{\overline{d}_2}\ar[d]^{\textrm{onto}} &H^3(X/G,\Z/2)\\
F_1WR(X)/F_2 \ar[r]^-{\textrm{onto}} &\quad \Pic^0_G(X). &
}\end{equation}
The bottom horizontal arrow in \eqref{eq:Fil12} is onto because, 
as pointed out in \ref{def:PicG}, $F_1WR(X)$ maps onto $\Pic_G^0(X)$.
The top middle vertical is an injection with cokernel
$H^1(X/G,\Z/2)\cong\Pic_G^0(X)$ by Lemma \ref{H^q_G} and
Definition \ref{def:PicG}.  It follows that $H_\infty$ is also an injection.

By Example \ref{local-coeffs}, $H_G^3(X,\Z/2)\cong H^3(X/G,\Z/2)$;
the right vertical map is reduction mod~2.

\begin{definition}\label{snake}
Let $\Delta$ denote the kernel of the induced surjection
$F_1WR(X)/F_2\to \Pic_G^0(X)$ in the bottom row of \eqref{eq:Fil12}.

By the snake lemma applied to the columns of \eqref{eq:Fil12},
$\Delta$ is the image in $H^3(X,\Z(1))$ of the
group of all $a$ in $H_G^1(X^G\!,\Z/2)$ which
vanish in $H^3(X/G,\Z/2)$ under the map $\overline{d}_2$ of
\eqref{eq:Fil12}, 
i.e., $\Delta$ is
\[
\{ a\in \mathrm{image}\bigl[H^1(X^G\!,\Z/2)\smap{d_2} H_G^3(X,\Z(1))\bigr] :
\bar{a}\!=0~\text{in}~H^3(X/G,\Z/2) \}.
\]
\end{definition}

\begin{subremark}\label{snake+}
Since the image of $d_2$ has exponent~2, $\Delta$ is also the intersection
of $\mathrm{image}(d_2)$ and $2\cdot H_G^3(X,\Z(1))$.

Clearly, $\Delta=0$ if $H_G^3(X,\Z(1))$ has no 2-torsion.
We also have $\Delta=0$ if the even Tate cohomology of $\Z/2$
acting on $H_G^3(X,\Z(1))$ is trivial.
\end{subremark}

\begin{lemma}\label{F2WR}
When $\dim(X)\le4$ and $\dim(X^G)\le2$,
\[ F_2WR(X)\! \cong F_2KO_G(X)/H(F_2KR(X))\!
\cong  E_\infty^{2,-2}(KR)/H(E_\infty^{2,-2}(KR)).
\]
\end{lemma}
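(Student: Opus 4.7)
The plan is to verify the two displayed isomorphisms in turn. By construction $F_2WR(X)$ is the image of $F_2KO_G(X)$ under the projection $KO_G(X)\twoheadrightarrow WR(X)$, so
\[ F_2WR(X) = F_2KO_G(X)\,\big/\bigl(F_2KO_G(X)\cap H(KR(X))\bigr). \]
The first claimed isomorphism therefore reduces to the identity
\[ F_2KO_G(X)\cap H(KR(X)) = H(F_2KR(X)). \]
The inclusion $\supseteq$ is automatic since the hyperbolic map respects the skeletal filtration, and the reverse inclusion is where the work lies.

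To prove $\subseteq$, I would show that any $k\in KR(X)$ with $H(k)\in F_2KO_G(X)$ already lies in $F_2KR(X)$, by climbing the filtration one step at a time. The key input is that the maps on associated graded pieces, $H_\infty^{p,-p}\colon E_\infty^{p,-p}(KR)\to E_\infty^{p,-p}(KO_G)$, are injective for $p=0,1$. For $p=0$, this is the calculation behind Remark \ref{H(1)}: the map $\Z\cong E_\infty^{0,0}(KR)\to E_\infty^{0,0}(KO_G)\subseteq\Z\oplus\Z^\nu$ sends $1$ to $(2,-1,\dots,-1)$, which is injective. For $p=1$, the injectivity of $H_\infty^{1,-1}$ is exactly the statement recorded immediately after diagram \eqref{eq:Fil12}, obtained by a diagram chase on its two exact rows. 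Applying these in succession, $\eta^0(H(k))=0$ forces $k\in F_1KR(X)$, and then $\eta^1(H(k))=0$ forces $k\in F_2KR(X)$.

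For the second isomorphism, I would reduce modulo $F_3KO_G(X)$. Since $H$ respects the filtration, the projection $F_2KO_G(X)\twoheadrightarrow E_\infty^{2,-2}(KO_G)$ sends $H(F_2KR(X))$ onto $H_\infty^{2,-2}\bigl(E_\infty^{2,-2}(KR)\bigr)$, inducing a surjection
\[
F_2KO_G(X)/H(F_2KR(X)) \twoheadrightarrow E_\infty^{2,-2}(KO_G)\big/H_\infty\bigl(E_\infty^{2,-2}(KR)\bigr)
\]
with kernel $F_3KO_G(X)\big/\bigl(F_3KO_G(X)\cap H(F_2KR(X))\bigr)$. The previous lemma applies under our hypotheses and yields $F_3WR(X)=0$, so $F_3KO_G(X)\subseteq H(KR(X))$; combined with the first step, $F_3KO_G(X)\subseteq F_2KO_G(X)\cap H(KR(X)) = H(F_2KR(X))$, whence the kernel is zero. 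The main obstacle is thus the injectivity of $H_\infty^{1,-1}$, but that has already been established in the discussion surrounding \eqref{eq:Fil12}; the rest is a clean filtration-and-quotient bookkeeping exercise.
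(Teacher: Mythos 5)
Your proof is correct and follows essentially the same route as the paper's: the paper's entire argument is the one-line observation that, since $H_\infty$ is injective in diagram \eqref{eq:Fil12}, the intersection $H(F_1KR(X))\cap F_2KO_G(X)$ equals $H(F_2KR(X))$. You supply that same injectivity, together with the filtration-degree-zero step from Remark \ref{H(1)} and the reduction modulo $F_3$ via $F_3WR(X)=0$, in more detail than the paper makes explicit.
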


\begin{proof}
Because $H_\infty$ is an injection in diagram \eqref{eq:Fil12}, 
the intersection
$H(F_1KR(X))\cap F_2KO_G(X)$ is the image of $F_2KR(X)$.
\end{proof}

\begin{lemma}\label{Fil12WR}
If $\dim(X)\le4$ and $X^G$ has $\nu>0$ components,
of dimension $\le\!2$, then there is an isomorphism
${_2}H^3_G(X;\Z(1))\smap{\cong}\! F_2 WR(X)$.
\end{lemma}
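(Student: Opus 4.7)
My plan is to combine Lemma~\ref{F2WR} with an identification of the $E_2^{2,-2}$ terms of the Bredon spectral sequences for $KR$ and $KO_G$, and then extract ${_2}\H^3_G(X,\Z(1))$ via a Bockstein. First, Lemma~\ref{F2WR} gives
\[
F_2 WR(X) \;\cong\; E_\infty^{2,-2}(KO_G)\bigm/ H\bigl(E_\infty^{2,-2}(KR)\bigr).
\]
Under the bounds $\dim(X)\le 4$ and $\dim(X^G)\le 2$ (so in particular $\dim(X/G)\le 4$), every higher differential $d_r$ with $r\ge 3$ either in or out of bidegree $(2,-2)$ has source or target in a Bredon cohomology group $H^p_G(X;-)$ with $p\ge 5$, which vanishes. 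Consequently $E_\infty^{2,-2}=E_2^{2,-2}/\operatorname{im}(d_2)$ for both spectral sequences.

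Second, I would identify the $E_2$ terms using the constant coefficient systems $KR^{-2}=\Z(1)$ (the sign representation, since $KR^{-2}(\mathrm{pt})=\Z$ with complex conjugation) and $KO_G^{-2}=\Z/2$ (constant, since $KO^{-2}(\mathrm{pt})=\Z/2$). This gives $E_2^{2,-2}(KR)=\H^2_G(X,\Z(1))$ and, by Example~\ref{local-coeffs}, $E_2^{2,-2}(KO_G)=\H^2_G(X,\Z/2)\cong H^2(X/G,\Z/2)$. Since the hyperbolic (= forgetful) map $KR\to KO_G$ on spectra induces reduction of coefficients $\Z(1)\to\Z/2$ on the $-2$ row, the hyperbolic map on $E_2^{2,-2}$ is exactly the mod-$2$ reduction.

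Third, I need to check that the incoming $d_2$'s cancel when we pass to cokernels, i.e.\ that the image of $d_2\colon E_2^{0,-1}(KO_G)\to H^2(X/G,\Z/2)$ already lies in the image of mod-$2$ reduction. The map of spectral sequences induced by $H$ gives a commuting square between the two $d_2$'s, and one verifies (either by direct inspection of the coefficient systems $KO_G^{-1}$ vs.\ $KR^{-1}$, which differ only at the free orbit, or by noting that $H\circ c_1 \equiv w_2 \pmod 2$ so the Bockstein $\beta$ annihilates classes coming from $H(KR)$) that $d_2^{KO_G}$ factors through mod-$2$ reduction of $d_2^{KR}$. Hence
\[
F_2 WR(X) \;\cong\; \operatorname{coker}\bigl(\H^2_G(X,\Z(1))\xrightarrow{\;\bmod 2\;}\H^2_G(X,\Z/2)\bigr).
\]

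Finally, the Bockstein long exact sequence attached to $0\to\Z(1)\xrightarrow{\cdot 2}\Z(1)\to\Z/2\to 0$ reads, in the relevant range,
\[
\H^2_G(X,\Z(1))\xrightarrow{\;\bmod 2\;}\H^2_G(X,\Z/2)\xrightarrow{\;\beta\;} {_2}\H^3_G(X,\Z(1))\to 0,
\]
which identifies the above cokernel with ${_2}\H^3_G(X,\Z(1))$, as claimed; tracing through the identifications shows the isomorphism is induced by $\bar w_2=\beta\circ w_2$ of Theorem~\ref{thm:Hasse}. The main obstacle is the third step: the direct verification that the two $d_2$'s are compatible in exactly the way needed to make the $E_\infty$-cokernel collapse to the $E_2$-cokernel. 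Once that compatibility is in hand, the rest is a formal diagram chase using the Bockstein sequence.
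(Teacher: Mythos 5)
Your overall strategy (start from Lemma \ref{F2WR}, compute the $E_2^{2,-2}$-terms, control the incoming $d_2$, finish with a Bockstein) is parallel to the paper's, but two of your steps are wrong or missing. First, the identification of the $E_2$-terms is incorrect, and this matters because $X^G\ne\emptyset$. The coefficient system $KR^{-2}$ is not ``$\Z(1)$'': its value on a fixed orbit is $KR^{-2}(\pt)=KO^{-2}(\pt)=\Z/2$, not $\Z$, so by Example \ref{ex:coeff}(b) it is $(\Z/2\!\to\!\Z(1))$ with zero structure map; likewise $KO_G^{-2}$ has value $KO^{-2}\otimes RO(G)\cong(\Z/2)^2$ at a fixed orbit, not $\Z/2$. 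Hence, by Corollary \ref{KR(2)} and Lemma \ref{H^q_G}, the correct terms are $E_2^{2,-2}(KR)\cong H^2(X^G,\Z/2)\oplus H^2_G(X;\Z(1))$ and $E_2^{2,-2}(KO_G)\cong H^2(X^G,\Z/2)\oplus H^2(X/G,\Z/2)$: these are Bredon groups, not the Borel groups you wrote (and $\H^2_G(X,\Z/2)\not\cong H^2(X/G,\Z/2)$ once $X^G\ne\emptyset$). Consequently your Bockstein lands in the Borel group ${}_2\H^3_G(X,\Z(1))$, which is not the group in the statement and is genuinely different: already for $X=\pt$ (allowed, with $\nu=1$) your formula outputs ${}_2\H^3_G(\pt,\Z(1))=\Z/2$, whereas $F_2WR(\pt)=0={}_2H^3_G(\pt;\Z(1))$. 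The fixed-point summands you dropped, and the distinction between $H^*_G(X;(0\!\to\!\Z/2))$ and $H^*(X/G,\Z/2)$, are exactly what forces the answer to be the Bredon group ${}_2H^3_G(X;\Z(1))$.

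Second, the step you yourself flag as the main obstacle is left unproved, and neither suggested justification works: inspecting coefficient systems cannot determine a $d_2$-differential, and the remark ``$H\circ c_1\equiv w_2$, so $\beta$ kills classes from $H(KR)$'' says nothing about where $d_2$ sends the one class not hit by $KR$, namely the extra $\Z/2\cong H^0(X/G,KO^{-1})$ inside $E_2^{0,-1}(KO_G)\cong H^0(X^G,\Z/2)\oplus H^0(X/G,\Z/2)$ (the $KR$-side source is only $H^0(X^G,\Z/2)\cong(\Z/2)^\nu$). This is precisely where the paper's proof does its work: it assembles the $d_2$-sources, the $E_2^{2,-2}$-terms and the $E_\infty^{2,-2}$-terms into a diagram with exact columns, identifies the column cokernels as $\Z/2$, ${}_2H^3_G(X;\Z(1))$ and $F_2WR(X)$, and kills the residual map $\Z/2\to{}_2H^3_G(X;\Z(1))$ by comparing with the corresponding diagram for a point (the extra class is pulled back along $X\to\pt$, so its $d_2$ vanishes by naturality). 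Some argument of this kind must be supplied; your claimed inclusion $\operatorname{im}(d_2^{KO_G})\subseteq\operatorname{im}(H)$ does not follow from what you wrote. (Your degeneration step is essentially fine, except that the outgoing $d_2$ on bidegree $(2,-2)$ vanishes because the $q=-3$ row of coefficient systems is zero, not for dimension reasons.)
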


\begin{proof}
By Lemma \ref{H^q_G} and Corollary \ref{KR(2)},
and Lemma \ref{F2WR},
we also have a commutative diagram with exact columns:
\begin{equation*}\xymatrix@R=1.5em{
(\Z/2)^\nu \ar[d] \ar[r]^-{d_2} & H^2_G(X;\Z(1))\oplus H^2(X^G,\Z/2)
          \ar[d] \ar[r] &E_\infty^{2,-2}(KR) \ar[d]^{H_\infty^{2,-2}} \to0 \\ 
(\Z/2)^{1+\nu}\ar[d]^{\textrm{onto}}\ar[r]^-{d_2} & H^2(X/G,\Z/2)\oplus H^2(X^G,\Z/2)
   \ar[d]^{\textrm{onto}} \ar[r] &  E_\infty^{2,-2}(KO_G)\ar[d]^{\textrm{onto}} \to\!0 \\
(\Z/2) \ar[r] & {_2}H^3_G(X;\Z(1))
\ar[r] &\quad  F_2 WR(X)\! \to\!0. 
}\end{equation*}
A comparison with the corresponding diagram when $X$ is a point
shows that the bottom left horizontal map is zero and hence that we have
${_2}H^3_G(X;\Z(1)) \cong F_2 WR(X)$.
\end{proof}

Recall that the group $\Delta$ is defined in \ref{snake}.

\begin{theorem}\label{WR-X/G}
If $X$ is 4-dimensional and $X^G$ has $\nu>0$ components, 
of dimension $\le2$, the image $S_t$ of the signature $WR(X)\to\Z^\nu$
is a subgroup of 2-primary index, and there is an exact sequence
\[
0\to {_2}H^3_G(X;\Z(1)) \to WR(X)
\to S_t\oplus H^1(X/G,\Z/2)\oplus \Delta \smap{} 0.
\]
In particular, if $H^3_G(X;\Z(1))$ has no 2-torsion, then
\[  WR(X) \cong S_t \oplus H^1(X/G,\Z/2). \]
\end{theorem}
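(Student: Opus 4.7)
The plan is to analyze the filtration $F_\bullet WR(X)$ coming from the Bredon spectral sequence \eqref{Bredon-ss} and reassemble its successive quotients into the claimed direct sum. First, I would observe that the signature $\sigma\colon WR(X)\to\Z^\nu$ has kernel $F_1 WR(X)$, which is a finite $2$-group when $\dim X^G\le 2$ by \cite[7.4]{KSW}; in particular its image $S_t$ has $2$-primary index. Because $S_t$ is free abelian, the quotient $WR(X)\to S_t$ admits a section, giving $WR(X)\cong S_t\oplus F_1 WR(X)$. It therefore suffices to identify $F_1 WR(X)$ as an extension of $H^1(X/G,\Z/2)\oplus\Delta$ by ${_2}H^3_G(X;\Z(1))$.

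Next, I would invoke Lemma \ref{Fil12WR} to identify $F_2 WR(X)\cong {_2}H^3_G(X;\Z(1))$. The bottom row of diagram \eqref{eq:Fil12} supplies a surjection $F_1 WR(X)/F_2\twoheadrightarrow \Pic_G^0(X)\cong H^1(X/G,\Z/2)$ whose kernel is $\Delta$ by Definition \ref{snake}. What remains is to split this short exact sequence of $2$-torsion abelian groups, and for this I would use the map $f\colon\Pic_G^0(X)\to F_1 WR(X)$ sending a Real line bundle $L$ with trivial sign to $f(L)=[(L,1)]-1$, where $1=[(\R,1)]$. Since $L$ has trivial sign on $X^G$, $f(L)$ has vanishing signature and lies in $F_1 WR(X)$. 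Multiplicativity of the Bredon filtration gives $F_1\cdot F_1\subseteq F_2$, so the ring identity $[(L,1)]\cdot[(M,1)]=[(L\otimes M,1)]$ yields $f(L\otimes M)\equiv f(L)+f(M)\pmod{F_2}$. Hence $\bar f$ is a homomorphism $\Pic_G^0(X)\to F_1/F_2$ whose composition with the discriminant $F_1/F_2\to\Pic_G^0(X)$ is the identity (since $w_1[(L,1)]=L$), giving the desired splitting.

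Assembling the three pieces yields the stated exact sequence, and the final claim is immediate from Remark \ref{snake+}: $\Delta=0$ whenever $H^3_G(X;\Z(1))$ has no $2$-torsion. The hard part will be verifying the multiplicative compatibility of the Bredon filtration on $WR(X)$ viewed as a quotient of the ring $GR(X)$ (to justify $F_1\cdot F_1\subseteq F_2$), together with checking that the discriminant on $F_1/F_2$ sends $[(L,1)]-1$ to $L$; both should follow from the fact that $H(KR)$ is an ideal in $GR$ and from the construction of $w_1$ in Definition \ref{def:w1}.
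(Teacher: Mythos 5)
Your proof is correct and follows the same skeleton as the paper's: split off $S_t$ using the freeness of $WR(X)/F_1\cong\Z^\nu$, identify $F_2WR(X)\cong{_2}H^3_G(X;\Z(1))$ via Lemma \ref{Fil12WR}, and read off from diagram \eqref{eq:Fil12} that $F_1WR(X)/F_2$ is an extension of $\Pic^0_G(X)\cong H^1(X/G,\Z/2)$ by $\Delta$. The one place you diverge is in splitting the sequence $0\to\Delta\to F_1WR(X)/F_2\to\Pic^0_G(X)\to0$. The paper's argument here is a one-liner: $F_1WR(X)/F_2$ is a quotient of $E_\infty^{1,-1}(KO_G)$, hence a subquotient of $H^1_G(X;KO_G^{-1})$, which has exponent~$2$; so all three terms are $\Z/2$-vector spaces and the extension splits non-canonically, with no section to construct. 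Your explicit section $L\mapsto[L]-1$ does work --- $[L]-1$ lies in $\ker\eta^0=F_1$ precisely because $L$ has trivial sign, $\det([L]-1)=L$ recovers the identity on $\Pic^0_G(X)$, and the multiplicativity $F_1\cdot F_1\subseteq F_2$ of the skeletal filtration for the multiplicative theory $KO_G$ (inherited by its quotient ring $WR$) gives additivity modulo $F_2$ --- and it has the virtue of producing a canonical splitting. But the verification you flag as ``the hard part'' is exactly the work that the exponent-$2$ observation renders unnecessary; since the theorem only asserts an abstract direct sum in the cokernel, the cheaper argument suffices.
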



\begin{proof}
Given Lemma \ref{Fil12WR}, we need to determine the kernel
of the surjection $F_1/F_2WR(X)\to\Pic_G^0(X)$ in \eqref{eq:Fil12}.
Since $H_G^1(X;KO_G^{-1})$ has exponent~2, the 
subgroup $E_\infty^{1,-1}(KO_G)$ and its quotient 
$F_1/F_2WR(X)$ also have exponent~2.  
This implies that the extension has a (non-canonical) splitting: 
$F_1/F_2 WR(X)\cong H^1(X/G,\Z/2)\oplus \Delta$.

Finally, if $H^3_G(X;\Z(1))$ has no 2-torsion, then $\Delta = 0$.
\end{proof}

When $V$ is a smooth projective variety over $\R$ and $X=V_\top$, 
we can use the isomorphisms
$H_\et^*(V,\Z/2)\cong \H_G^*(X,\Z/2)$ to compare $W(V)$ and $WR(X)$.
Recall from Section \ref{sec:Pic_G} that the image $S_a$ of
the algebraic signature $W(V)\to\Z^\nu$ is a subgroup of the image
$S_t$ of the topological signature $WR(X)\to\Z^\nu$.

Recall that $\rho_0$ denotes the rank of 
$c_1:\Pic(V)\otimes\Q\to \H_G^2(X,\Q(1))$; we have
$\rho_0\ge p_g$ by Hodge theory, and if $p_g(V)=0$ then $\rho_0=0$.
The group $\Delta$ is defined in \ref{snake}. 
(See also Remark \ref{snake+}.)

\goodbreak
\begin{theorem}\label{thm:W-WR}
Let $V$ be a smooth geometrically connected  projective surface 
defined over $\R$, with $V(\R)\ne\emptyset$.
Then there is an exact sequence
\[
0 \to (\Z/2)^{\rho_0} \to W(V) \to WR(V_\top) \to \Delta\oplus(S_t/S_a) \to 0.
\]
\end{theorem}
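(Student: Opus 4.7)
The plan is to apply the snake lemma to the commutative diagram comparing the signature sequences for $W(V)$ and $WR(X)$, where $X = V_\top$. Since $V(\R) \ne \emptyset$ we have $\nu>0$ and $W(V)_\tors = I(V)_\tors$, and both groups fit into short exact sequences
\[
0 \to W(V)_\tors \to W(V) \to S_a \to 0, \qquad 0 \to F_1 WR(X) \to WR(X) \to S_t \to 0,
\]
the second being the filtration of Section \ref{sec:R-points}. The cokernel of $S_a \hookrightarrow S_t$ is $S_t/S_a$, so the snake lemma gives $\ker(W(V) \to WR(X)) = \ker f$ and a short exact sequence $0 \to \coker f \to \coker(W(V) \to WR(X)) \to S_t/S_a \to 0$, where $f: W(V)_\tors \to F_1 WR(X)$ is the induced map.

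Next I would refine $f$ by matching known filtrations of source and target. Sujatha's Proposition \ref{Itorsion} gives $0 \to H^0_\tors(V,\cH^2) \to W(V)_\tors \to H^0_\tors(V,\cH^1) \to 0$, while Lemma \ref{Fil12WR} together with the splitting of $F_1/F_2$ (used in the proof of Theorem \ref{WR-X/G}) yields $0 \to {_2}H_G^3(X,\Z(1)) \to F_1 WR(X) \to H^1(X/G,\Z/2) \oplus \Delta \to 0$. A second snake lemma on the resulting ladder reduces the kernel and cokernel of $f$ to those of the two induced maps $f_1: H^0_\tors(V,\cH^2) \to {_2}H_G^3(X,\Z(1))$ and $f_2: H^0_\tors(V,\cH^1) \to H^1(X/G,\Z/2) \oplus \Delta$.

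The map $f_2$ is the algebraic discriminant; Cox's theorem \cite{Cox} and the description of $H^0_\tors(V,\cH^1)$ as the kernel of the sign map on $\H_G^1(X,\Z/2)$ identify $f_2$ with the inclusion of the $H^1(X/G,\Z/2)$-summand, so $\ker f_2 = 0$ and $\coker f_2 = \Delta$. By Theorem \ref{thm:Hasse}, the Kummer identification ${_2}\Br(V) \cong H^2_\et(V,\Z/2)/(\Pic V/2)$ recasts $f_1$ as the Bockstein $\beta$ restricted to $H^0_\tors(V,\cH^2)$, and using the topological Chern class $c_1: \Pic(V) \to \H_G^2(X,\Z(1))$ together with a comparison of the algebraic and topological Kummer sequences produces $\ker f_1 = (\Z/2)^{\rho_0}$, exactly as in the no-real-points case of Theorem \ref{WR:pg=0}. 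The main obstacle will be to establish both the surjectivity of $f_1$ (so that $\coker f = \coker f_2 = \Delta$) and the splitting of the final extension $0 \to \Delta \to \coker(W(V) \to WR(X)) \to S_t/S_a \to 0$ as a direct sum; the surjectivity should follow from a van Hamel--type argument and the splitting, given that both groups are $2$-torsion (Remark \ref{snake+} together with $8\Z^\nu \subseteq S_a$ from Colliot-Th\'el\`ene--Sansuc), should follow by lifting classes of $S_t/S_a$ to explicit symmetric forms in $WR(X)$ whose image in $\Delta \subseteq F_2 WR(X)$ vanishes by filtration degree.
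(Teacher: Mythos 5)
Your overall route is the paper's own: reduce to the torsion subgroups via the signature, compare Sujatha's filtration of $I(V)_\tors$ with the topological filtration $F_2\subset F_1$ of $WR(X)$, identify the top graded map with the discriminant (Theorem \ref{w1:factors}, Corollary \ref{iso-modI2}, Definition \ref{def:PicG}, Theorem \ref{WR-X/G}), and the bottom graded map with the Hasse invariant/Bockstein via the split sequence \eqref{Krasnov} and Theorem \ref{thm:Hasse}. Those steps are correct and match the paper essentially step for step, including $\ker f_2=0$, $\coker f_2=\Delta$, and $\ker f_1=(\Z/2)^{\rho_0}$.

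The two points you defer, however, are exactly where your write-up stops short, and your sketch for one of them would not work as stated. (1) Surjectivity of $f_1\colon H^0_\tors(V,\cH^2)\to F_2WR(X)\cong{}_2H^3_G(X;\Z(1))$ is genuinely needed for the cokernel to be $\Delta\oplus(S_t/S_a)$; in the paper this is extracted together with the kernel statement from \eqref{Krasnov} (which gives that $\beta\colon{}_2\Br(V)\to{}_2\H_G^3(X,\Z(1))$ is onto with kernel $(\Z/2)^{\rho_0}$), combined with Theorem \ref{thm:Hasse} and Lemma \ref{Fil12WR} to identify $I_2(V)_\tors\to F_2WR(X)$ with the appropriate restriction of $\beta$. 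Your appeal to a ``van Hamel--type argument'' is pointing in the right direction (van Hamel is what controls the passage from ${}_2\Br(V)$ to its subgroup $H^0_\tors(V,\cH^2)$ versus the relative, Bredon-type group ${}_2H^3_G(X;\Z(1))$), but the argument is not carried out. (2) Your proposed splitting of $0\to\Delta\to\coker(W(V)\to WR(X))\to S_t/S_a\to0$ rests on a misreading of where $\Delta$ lives: by Definition \ref{snake}, $\Delta$ is the kernel of $F_1WR(X)/F_2WR(X)\to\Pic^0_G(X)$, i.e.\ a subquotient in filtration degree one, not a subgroup of $F_2WR(X)$, so ``vanishing by filtration degree'' is not available, and constructing explicit forms is unnecessary. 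The splitting follows at once from facts already quoted in the paper: the cokernel of $W(V)\to WR(V_\top)$ has exponent $2$ for surfaces by \cite[Thm.\,8.7]{KSW}, and $S_t/S_a$ has exponent $2$ since $2S_t\subseteq S_a$; hence the extension is one of $\Z/2$-vector spaces and splits. Without some such argument an order-$4$ class in the cokernel cannot be excluded (an extension of one exponent-$2$ group by another need not split), so this step cannot simply be asserted.
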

\goodbreak

\begin{proof}
As the signature on $W(V)$ factors through the signature on $WR(V_\top)$,
we are reduced to a comparison of their torsion subgroups
$I(V)_\tors$ and $F_1WR(V_\top)$.
By Proposition \ref{Itorsion}, which is due to Sujatha, 
\[
I(V)_\tors/I_2(V)_\tors\cong H^0_\tors(V,\cH^1)\ \mathrm{ and }\
I_2(V)_\tors\cong H^0_\tors(V,\cH^2).
\]
By definition, $H^0_\tors(V,\cH^1)$ and $H^0_\tors(V,\cH^2)$ are the 
kernels of the stabilization maps from
$H^0(V,\cH^1)\cong H_\et^1(V,\Z/2)$ and $H^0(V,\cH^2)\cong {_2}\Br(V)$
to $H_\et^3(V,\Z/2)\cong(\Z/2)^\nu$. 


Recall from \cite[6.3]{RGB} that when $V$ is a smooth 
geometrically connected projective variety over $\R$ then 
there is a split exact sequence
\begin{equation}\label{Krasnov}
0\to (\Q/\Z)^{\rho_0} \to \Br(V) \map{\beta} {}_\tors\H_G^3(X,\Z(1)) \to 0,
\end{equation}
where $\beta$ is induced from the Bockstein on $\H_G^2(X,\Z/2)$,
and $X=V_\top$. 
Hence $(\Z/2)^{\rho_0}$ is the kernel of 
$\beta: {_2}\Br(V)\to\ {_2}\H_G^3(X,\Z(1))$; 
by Theorem \ref{thm:Hasse}, $(\Z/2)^{\rho_0}$ is the kernel of
$I_2(V)_\tors \to  F_2WR(X)$.

It remains to show that the induced map from
$I(V)_\tors/I_2(V)_\tors \cong\! H^0_\tors(V,\cH^1)$ to $F_1WR(X)/F_2WR(X)$ 
is an injection with cokernel $\Delta$.
As we saw in Section \ref{sec:Pic_G},
it follows from 
Definition \ref{def:PicG} that 
\[ H^0_\tors(V,\cH^1) \cong \Pic_G^0(X) \map{\cong} H^1(X/G,\Z/2), \]
where the second map is $w_1$.
By Theorem \ref{w1:factors} and Corollary \ref{iso-modI2},
this is the same as the discriminant on $I(V)$.
It follows from Theorem \ref{WR-X/G} that the cokernel of
$H^0_\tors(V,\cH^1) \to WR(X)/F_2WR(X)$ is $\Delta$.
\end{proof}

\begin{example}\label{ExE}
Consider the real surface $V=E\times E$, where $E$ is 
an elliptic curve with 2 real connected components. 
We saw in Example \ref{W(ExE)} that 
$W(V)\cong\Z^4\oplus(\Z/4)^3\oplus\Z/2$.
We claim that $WR(V_\top)\cong\Z^4\oplus(\Z/2)^3$ so that
$W(V)\!\to\!WR(V_\top)$ is a surjection with kernel $(\Z/2)^4$.

To compute $WR(V_\top)$ we choose a basepoint fixed by the involution,
set $Y=E_\top$ and observe that since $V_\top=Y\times Y$ we have
a split exact sequence
\[
0 \to KR(Y\!\wedge Y)\to KR(Y\!\times Y) \to KR(Y,\pt)\oplus KR(Y,\pt),
\]
and similarly for $KO_G(Y\!\times\!Y)$. Thus we have
\[
WR(V_\top)\cong WR(Y\wedge Y)\oplus WR(Y,\pt)\oplus WR(Y,\pt)
\]
Finally, we use the identity $WR(Y,\pt)\cong\Z\oplus(\Z/2)$ 
of Corollary \ref{WR(curve)}.

In the same way, $Y=S^1\times S^{1,1}$, where $S^{1,1}$ is the unit circle
in $\C$. Therefore $WR(Y\wedge Y)$ decomposes as the sum of
$WR(S^2)=\Z\oplus(\Z/2)$ and the groups $WR(S,\pt)$ for $S$ on the 
ordered list
\[ 
S^{2,3}, S^{2,2}, S^{2,2}, S^{1,3}, S^{1,3}, S^{1,2}, S^{1,2}, S^{2,1}.
\]
Since $S^{1,2}\cong\C\bP^1$, we see from Corollary \ref{WR(curve)} 
that $WR(S^{1,2})=\Z$. We also have $WR(S^{2,1})=\Z^2$ by
Example \ref{WR(2-sphere)}.  
A similar detailed computation, mentioned in Example \ref{WR(2-sphere)},
shows that
\[
WR(S^{2,3})= WR(S^{2,2}) = WR(S^{1,3}) = \Z.
\]
It follows that $WR(Y\times Y) \cong \Z^4\oplus(\Z/2)^3$.
Thus $W(V)\to WR(V_\top)$ is a surjection with kernel $(\Z/2)^4$.
\end{example}

\goodbreak
\medskip{\it Ruled Surfaces}\smallskip

We return to {\it ruled surfaces} over a curve $C$, i.e., varieties
of the form $\bP=\bP(E)$, where $E$ is a rank~2 algebraic line bundle over $C$.
Theorem \ref{CxP1-free} describes the case when $C(\R)=\emptyset$.

\goodbreak
\begin{theorem}\label{ruled}
Let $C$ be a curve defined over $\R$ 
with $\nu>0$ real components, and genus $g$.
If $\bP=\bP(E)$ is a ruled surface over $C$, then 
\[
WR(\bP_\top) \lmap{\cong} WR(C_\top) \cong \Z^\nu\oplus H^1(C_\top/G,\Z/2).
\]
If $C$ is smooth projective, then we also have
$W(\bP) \smap{\cong} WR(\bP_\top)$.
\end{theorem}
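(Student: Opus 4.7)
The plan is to establish the topological isomorphism
$\pi^\ast\colon WR(C_\top) \map{\cong} WR(\bP_\top)$ via an equivariant
projective bundle formula for $WR$, and then deduce the algebraic
identification $W(\bP) \cong WR(\bP_\top)$ from Theorem~\ref{thm:W-WR}.

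For the topological isomorphism I view $\pi\colon \bP_\top \to C_\top$ as a
$G$-equivariant $S^2$-bundle. A projective bundle formula holds for both $KR$
and $KO_G$, expressing $KR(\bP_\top)$ and $KO_G(\bP_\top)$ each as a free
rank-$2$ module over the base, with basis $1$ and the tautological class
$\eta = [\cO(1)] - 1$. Since $\cO(1)$ and its dual $\cO(-1)$ together form
the hyperbolic bundle $H(\cO(1)) = \cO(1)\oplus\cO(-1)$, the extra generator
$\eta$ is hyperbolic modulo the contribution from the base and thus
vanishes in the cokernel $WR(\bP_\top)$ of the hyperbolic map. Hence
$\pi^\ast$ is an isomorphism.

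Alternatively, and more in the spirit of the rest of the paper, I can apply
Theorem~\ref{WR-X/G} directly to $X = \bP_\top$. Since $\bP^1(\R) \cong S^1$,
the fixed locus $X^G$ has exactly $\nu$ components (each an $S^1$-bundle
over a circle, i.e., a torus or Klein bottle), all of dimension~$2$, and
$\dim X = 4$. The Leray spectral sequence for $X/G \to C_\top/G$ (whose
fibers are $S^2$ over the interior and $D^2$ over the boundary $C_\top^G$)
gives $H^1(X/G,\Z/2) \cong H^1(C_\top/G,\Z/2)$; the signature image $S_t$
for $WR(X)$ coincides with that for $WR(C_\top)$, detected on one point
per fixed component; and the remaining pieces ${_2}H^3_G(X,\Z(1))$ and
$\Delta$ are shown to vanish by comparing the Bredon filtration of $WR(X)$
with Lemma~\ref{2-mfld} for $WR(C_\top)$, which exhibits no contributions
beyond the $H^1$-piece.

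For the algebraic identification when $C$ is smooth projective, I apply
Theorem~\ref{thm:W-WR} to $V = \bP$: since $\bP$ is ruled, $p_g(\bP) = 0$
and hence $\rho_0 = 0$ by Hodge theory, so $W(\bP) \to WR(\bP_\top)$ is
injective. The cokernel $\Delta \oplus (S_t/S_a)$ vanishes: $\Delta = 0$
by the topological analysis, and $S_a = S_t$ because the algebraic signature
factors through the surjection $W(\bP) \to W(C) \cong \Z^\nu \oplus (\Z/2)^g$
(Corollary~\ref{WR(curve)}), whose signature already realizes all of
$S_t \subseteq \Z^\nu$.

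The main obstacle is making the projective bundle formula for $WR$ precise
in this equivariant topological context --- or equivalently verifying the
vanishing of ${_2}H^3_G(X,\Z(1))$ and $\Delta$ in Theorem~\ref{WR-X/G};
both require careful analysis of the Bredon spectral sequences for $KR$
and $KO_G$ on the $G$-equivariant $S^2$-bundle $\bP_\top \to C_\top$,
together with the compatibility of the hyperbolic map with the extra
tautological generators.
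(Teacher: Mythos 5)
There is a genuine gap at the crux of your argument: the vanishing of the torsion contribution ${_2}H^3_G(\bP_\top;\Z(1))$ (hence $F_2WR(\bP_\top)=0$) and of $\Delta$, which is exactly what makes Theorem \ref{WR-X/G} give $WR(\bP_\top)\cong\Z^\nu\oplus H^1(\bP_\top/G,\Z/2)$, is asserted but never established -- you yourself flag it as ``the main obstacle.'' Comparing with Lemma \ref{2-mfld} for the curve cannot supply it, since that lemma only sees the $2$-dimensional base and says nothing about $H^3_G$ of the $4$-dimensional total space. The paper closes precisely this point with Kahn's equivariant projective bundle theorem (Theorem \ref{PBF}) applied with $A=\Z(1)$: it gives $\H^3_G(\bP_\top,\Z(1))\cong\H^3_G(C_\top,\Z(1))\oplus\H^1_G(C_\top,\Z)\cdot\xi$, and the relevant group is identified with $H^1((C_\top)_G,\Z)$, which is torsionfree; only then does Theorem \ref{WR-X/G} apply with $\Delta=0$. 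Moreover, the concrete mechanism you offer in your first route is false as stated: there is no rank-$2$ free projective bundle formula for $KO_G$ over a Real rank-$2$ bundle. Already over a complex point one has $KO_G(\bP_\top)=KO(\C\bP^1)=KO(S^2)\cong\Z\oplus\Z/2$, which is not free of rank $2$ over $KO(\pt)=\Z$, and the claim that the tautological class is ``hyperbolic modulo the base'' and so dies in $WR$ is not justified (the cokernel of the hyperbolic map is exactly what is being computed).

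Your Leray-type identification $H^1(\bP_\top/G,\Z/2)\cong H^1(C_\top/G,\Z/2)$ and the count of fixed components are fine, and your reduction of the second assertion to $\rho_0=0$ is reasonable; but note that your treatment of the cokernel in Theorem \ref{thm:W-WR} again leans on the unproven $\Delta=0$, and the claim $S_a=S_t$ via a section implicitly uses that the topological isomorphism is the pullback $\pi^*$ compatibly with signatures. The paper sidesteps all of this for the algebraic statement: it proves $W(\bP)\cong W(C)$ purely algebraically (Lemma \ref{ruled-W}, comparing Sujatha's invariants $j,k$ for $C$ and $\bP$) and then invokes $W(C)\cong WR(C_\top)$ from Corollary \ref{WR(curve)}, so no control of $S_t/S_a$ or $\Delta$ on the surface is needed there. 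To repair your write-up, import Theorem \ref{PBF} with $\Z(1)$-coefficients to get torsion-freeness of the degree-$3$ group, and either adopt the paper's algebraic route for the last assertion or make the compatibility of $\pi^*$ with signatures explicit.
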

\goodbreak

\begin{proof}
Let $X$ and $Y$ denote the $G$-spaces underlying $\bP=\bP(E)$
and $C$, respectively. 
Theorem \ref{WR-X/G} yields $WR(Y)\cong \Z^\nu\oplus H^1(Y/G,\Z/2)$.
By the Projective Bundle Theorem \ref{PBF}, 
$\H_G^3(X,\Z(1))\cong \H_G^1(Y,\Z)$. This group is defined to be
$H^1(Y_G,\Z)$, which is torsionfree, so by
Theorem \ref{WR-X/G} 
we have $WR(X)\cong \Z^\nu\oplus H^1(X/G,\Z/2)$. 
Since $H^1_\et(\bP,\Z/2)\cong H^1_\et(C,\Z/2)$, 
and $H^0(X^G)\cong H^0(Y^G)$, the exact sequence in
Example \ref{Borel} implies that 
$H^1(X/G,\Z/2)\cong H^1(Y/G,\Z/2)$. 
Hence $WR(X)\cong WR(Y)$.

When $C$ is smooth projective, $W(C)\cong WR(C_\top)$ by
Corollary \ref{WR(curve)} and $W(\bP)\cong W(C)$ by Lemma \ref{ruled-W}.
This gives the second assertion.
\end{proof}

\begin{lemma}\label{ruled-W}
Let $C$ be a smooth 
projective curve over $\R$,
with $\nu>0$ real components, and genus $g$.
If $\bP=\bP(E)$ is a ruled surface over $C$, then
$W(\bP) \cong W(C) \cong \Z^\nu\oplus(\Z/2)^g$.
\end{lemma}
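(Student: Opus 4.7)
The plan is to show that the pullback $\pi^*\colon W(C) \to W(\bP)$ is an isomorphism, using Sujatha's structure theorem for the torsion subgroup (Proposition \ref{Itorsion}) together with the algebraic projective bundle formula for \'etale cohomology; the identification $W(C) \cong \Z^\nu \oplus (\Z/2)^g$ then follows from Corollary \ref{WR(curve)}. First I would check that signatures are compatible. Since $\pi\colon \bP \to C$ is Zariski-locally trivial with fiber $\bP^1$, the map $\pi\colon \bP(\R) \to C(\R)$ is a fiber bundle whose fiber over $c \in C(\R)$ is the projectivization of the 2-dimensional real vector space $E_c$, i.e., $\R\bP^1 = S^1$. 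Hence $\pi$ induces a bijection on connected components, so $\bP(\R)$ also has $\nu$ components. Pullback of forms preserves signatures at corresponding components, so by Mah\'e/Brumfiel the torsionfree parts of $W(C)$ and $W(\bP)$ are both $\Z^\nu$ and are identified by $\pi^*$.

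Next I would show that $\pi^*$ induces an isomorphism on torsion subgroups. For the surface $\bP$, Sujatha's Proposition \ref{Itorsion} gives the exact sequence
\[
0 \to H^0_\tors(\bP, \cH^2) \to I(\bP)_\tors \to H^0_\tors(\bP, \cH^1) \to 0,
\]
and the analogous (simpler) analysis for the curve $C$ yields $I(C)_\tors \cong H^0_\tors(C, \cH^1) \cong (\Z/2)^g$, since for a curve $H^0(C, \cH^2) \cong \Br(C) \cong (\Z/2)^\nu$ maps isomorphically onto the stable value $(\Z/2)^\nu$, so $H^0_\tors(C, \cH^2) = 0$. The algebraic Projective Bundle Theorem (Cox combined with Theorem \ref{PBF}) yields $\pi^*\colon H^1_\et(C, \Z/2) \smap{\cong} H^1_\et(\bP, \Z/2)$, and $\Pic(\bP) \cong \Pic(C) \oplus \Z$ combined with a Leray argument gives $\Br(\bP) \cong \Br(C)$, hence $H^0(\bP, \cH^2) \cong H^0(C, \cH^2)$. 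The stabilization maps $H^0(V, \cH^n) \to (\Z/2)^\nu$ are compatible with $\pi^*$ because they are given by iterated cup product with the class $[-1]$, which pulls back, and because the bijection on real components identifies the targets. Hence $H^0_\tors(C, \cH^i) \smap{\cong} H^0_\tors(\bP, \cH^i)$ for $i = 1, 2$, and the five lemma gives $\pi^*\colon I(C)_\tors \smap{\cong} I(\bP)_\tors$. Combining this with the signature comparison and the functorial identification $W/I \cong \Z/2$ (rank mod 2) yields $W(C) \cong W(\bP)$.

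The main obstacle will be verifying $\Br(\bP) \cong \Br(C)$ carefully in the real setting and checking the resulting identifications are induced by $\pi^*$ and compatible with stabilization. One approach is the Leray spectral sequence for $\pi\colon \bP \to C$ with coefficients in $\Gm$: the stalks of $R^1\pi_* \Gm$ are $\Z$ (Picard of a geometric $\bP^1$-fiber) while $R^2\pi_*\Gm = 0$ (Brauer of $\bP^1$ over an algebraically closed field vanishes). Combined with $H^1_\et(C, \Z) = \Hom(\pi_1^\et(C), \Z) = 0$ (profinite-to-discrete) and the fact that the edge map $\Pic(\bP) \to H^0(C, \Z) = \Z$ is split surjective via $\cO(1)$, the spectral sequence collapses to give $\Br(\bP) \cong \Br(C)$. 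A secondary concern is that Proposition \ref{Itorsion} is stated for surfaces and one needs the torsion structure of $W(C)$; for this one either cites the well-known curve analog directly or uses that Corollary \ref{WR(curve)} already pins down the torsion to be $(\Z/2)^g$, matching $H^0_\tors(C, \cH^1)$.
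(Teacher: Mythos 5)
Your proposal is correct and follows essentially the same route as the paper: both arguments reduce to the equality of the number of real components together with a comparison of Sujatha's torsion invariants for $C$ and $\bP$, using $H^1_\et(\bP,\Z/2)\cong H^1_\et(C,\Z/2)$ and ${}_2\Br(\bP)\cong{}_2\Br(C)\cong(\Z/2)^\nu$. The only cosmetic differences are that you get the Brauer comparison from the Leray spectral sequence for $\Gm$ (with $\cO(1)$ killing the $d_2$-differential), where the paper uses $\Pic(\bP)\cong\Pic(C)\oplus\Z$ plus the mod-2 projective bundle theorem and Kummer theory, and that you feed Proposition \ref{Itorsion} into a five-lemma argument rather than counting orders via Remark \ref{def:j,k}.
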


\begin{proof}
Now suppose that $C$ is smooth and projective;
we need to show that $W(C)\to W(\bP)$ is an isomorphism.
We have $H^2_\et(\bP,\Z/2)\cong H^2_\et(C,\Z/2)\oplus\Z/2$
by the Projective Bundle Theorem \cite[VI.10.1]{Milne}.
Since $\Pic(\bP)\cong\Pic(C)\oplus\Z$, we have 
${_2}\Br(\bP)= {_2}\!\Br(C)=\Z/2^\nu$. As in Corollary \ref{WR(curve)},
$I_2(C)=\Z/2^\nu$; since $I_2(\bP)\subseteq\, {_2}\Br(\bP)$, 
we see that $I_2(\bP)/I_2(C)$ is 2-torsionfree.

By Sujatha's formulas listed in Remark \ref{def:j,k}, 
$k=\dim H^0_\tors(-,\cH^2)$ is the same for $C$ and $\bP$.
Similarly, $H^1_\et(\bP,\Z/2)\cong H^1_\et(C,\Z/2)$, and
$j=\dim H^0_\tors(-,\cH^1)$ is the same for $C$ and $\bP$.
Thus $W(C) = W(\bP)$, since both groups have order $2^{j+k}$
by  Remark \ref{def:j,k}. 
\end{proof}

\begin{subremark}
Charles Walter (unpublished \cite{Walter}) proved that for $Y$ smooth of
dimension $\le2$, and any projective bundle $\bP$ over $Y$,
we have 
$W(Y)\cong W(\bP)$.
\end{subremark}

\medskip\goodbreak
\section{3-folds}\label{sec:3-folds}

In this section, we collect information about smooth 3-folds
that follows from the techniques in this paper. We begin by
giving a short proof of a result of Parimala \cite{Parimala}.
Recall from \eqref{BO-seq} that $\cH^n$ is the Zariski sheaf 
associated to the presheaf $U\!\mapsto\! H^n_\et(U,\Z/2)$.

\begin{theorem}[Parimala]\label{3folds-fg}
Let $V$ be a smooth 3-dimensional algebraic variety over $\R$ or $\C$.
Then the group $W(V)$ is finitely generated if and only if
$CH^2(V)/2$ is finitely generated.
\end{theorem}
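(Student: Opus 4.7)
The plan is to filter $W(V)$ by the ideals $I_n(V)=W(V)\cap I^n(F)$ and match the graded pieces with $CH^2(V)/2$ using the Bloch--Ogus spectral sequence.

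First I would reduce to a torsion question. Over $\C$, $W(V)$ is pure $2$-primary torsion (Pfister), and over $\R$ its torsion-free part is finitely generated by the Mah\'e--Brumfiel signature theorem, so $W(V)$ is f.g.\ iff $W(V)_{\tors}$ (equivalently $I(V)_{\tors}$) is f.g. Proposition~\ref{exp.2^d}(i) with $d=3$ gives $I_4(V)_{\tors}=0$, and Lemma~\ref{In/In+1} embeds each of the three successive quotients $I_n(V)/I_{n+1}(V)$ (for $n=1,2,3$) into $H^0(V,\cH^n)$. For $n=1,2$ the ambient groups are f.g.: $H^0(V,\cH^1)\cong H^1_\et(V,\Z/2)$ is finitely generated by Kummer theory (both $\cO^\times(V)/2$ and ${}_2\Pic(V)$ are f.g.\ for smooth $V$ over $\R$ or $\C$), and $H^0(V,\cH^2)\cong{}_2\Br(V)$ is finitely generated because it sits, via Kummer, inside the f.g.\ group $H^2_\et(V,\Z/2)$. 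So the whole question reduces to finite generation of $H^0(V,\cH^3)$.

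Second, I would apply the Bloch--Ogus spectral sequence
\[ E_2^{p,q}=H^p(V,\cH^q)\ \Longrightarrow\ H^{p+q}_\et(V,\Z/2), \]
together with three key inputs: Bloch's formula $H^n(V,\cH^n)\cong CH^n(V)/2$, the Gersten vanishing $H^p(V,\cH^q)=0$ for $p>q$ (from the Gersten resolution of $\cH^q$), and the dimension bound $E_2^{p,q}=0$ for $p>3=\dim V$. Together these kill every differential touching $E_r^{0,3}$ and $E_r^{2,2}$ except the single differential
\[ d_2\colon H^0(V,\cH^3)\longrightarrow H^2(V,\cH^2)\cong CH^2(V)/2, \]
since the potential targets $E_2^{3,1}=H^3(V,\cH^1)$ and $E_r^{r,4-r}$ (for $r\ge4$) all vanish for dimension reasons. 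Hence $E_\infty^{0,3}=\ker(d_2)$ and $E_\infty^{2,2}=\coker(d_2)$, and both are subquotients of the f.g.\ abutment $H^{\le4}_\et(V,\Z/2)$ (finite in the proper case, and f.g.\ in general since it agrees with mod-$2$ singular or Borel cohomology of a finite-type space). This yields an exact sequence
\[ 0\to F\to H^0(V,\cH^3)\ \map{d_2}\ CH^2(V)/2\to F'\to 0 \]
with $F$ and $F'$ finitely generated, from which ``$H^0(V,\cH^3)$ is f.g.\ iff $CH^2(V)/2$ is f.g.'' follows at once.

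The main obstacle is the careful bookkeeping of the Gersten vanishings and of the remaining Bloch--Ogus differentials: the argument collapses if any stray differential between the relevant $E_r$-terms produces a non-finitely-generated correction. Once these technical inputs are in place, combining the two steps gives the stated equivalence $W(V)$ f.g.\ $\iff$ $CH^2(V)/2$ f.g.
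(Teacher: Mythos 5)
Your spectral-sequence step is exactly the paper's: the same three inputs (Bloch's formula, the Gersten vanishing $H^p(V,\cH^q)=0$ for $p>q$, and finiteness of $H^*_\et(V,\Z/2)$) yield the same four-term sequence $H^3_\et(V,\Z/2)\to H^0(V,\cH^3)\to CH^2(V)/2\to H^4_\et(V,\Z/2)$ with finite outer terms, and your reduction of $W(V)$ to its torsion via Pfister, Mah\'e--Brumfiel and Proposition \ref{exp.2^d} is also the paper's. The gap is in the sentence ``the whole question reduces to finite generation of $H^0(V,\cH^3)$''. You justify this only by the injection $I_3(V)/I_4(V)\hookrightarrow H^0(V,\cH^3)$ of Lemma \ref{In/In+1}, and an injection gives a one-way reduction: finiteness of $H^0(V,\cH^3)$ forces $W(V)$ to be finitely generated, but if $W(V)$ is finitely generated you can only conclude that $I_3(V)/I_4(V)$ is finite, which places no bound on $H^0(V,\cH^3)$ and hence none on $CH^2(V)/2$. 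So the implication ``$W(V)$ finitely generated $\Rightarrow$ $CH^2(V)/2$ finitely generated'' is not established by your argument.

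What is needed---and what the paper uses at the corresponding point---is that $I_3(V)/I_4(V)\to H^0(V,\cH^3)$ is an isomorphism, i.e.\ surjectivity in degree $3$, not just injectivity. This is not a formal diagram chase. Over $\C$ it does follow from \eqref{I_n-seq}: the residue fields $k(x)$ at codimension-one points have cohomological dimension $2$, so $I^3(k(x))=0$ and any lift to $I^3(F)$ of a class in $H^0(V,\cH^3)$ is automatically unramified. Over $\R$, however, the groups $I^3(k(x))$ are nonzero (torsionfree, detected by signatures), and showing that every unramified degree-$3$ class is represented by an element of $I_3(V)$ is the substantive input---in essence Parimala's contribution; compare Remark \ref{I_2/I_3}, where the analogous surjectivity in degree $2$ is stated as unknown for non-affine $V$. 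You should either quote this isomorphism or supply an argument for it; as written, your proposal proves only the converse implication, that finite generation of $CH^2(V)/2$ implies finite generation of $W(V)$.
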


\begin{proof}
By Cox' theorem, both $H^1_\et(V,\Z/2)$ and $H^2_\et(V,\Z/2)$ are finite.
Hence Lemma \ref{In/In+1} and Remark \ref{I_2/I_3} 
show that $W(V)/I_3(V)$ is finite.
In addition, $W(V)/\mathrm{torsion}$ is $\Z^\nu$, where $V(\R)$ has $\nu$
connected components, and $I_4(V)$ is torsionfree (by \ref{exp.2^d}).
Hence $W(V)$ is a finitely generated group if and only if
$I_3(V)/I_4(V)\cong H^0(V,\cH^3)$ is finite. 
Since $H^2(V,\cH^2)\cong CH^2(V)/2$, and $H^p(V,\cH^q)=0$
for $p>q$ \cite{BlochOgus}, the coniveau
spectral sequence $H^p(V,\cH^q)\Rightarrow H^{p+q}_\et(V,\Z/2)$
yields the exact sequence
\[
H^3_\et(V,\Z/2)\to H^0(V,\cH^3)\ \map{d_2}\ CH^2(V)/2 \to H^4_\et(V,\Z/2).
\]
As the outer terms are finite, it follows that $H^0(V,\cH^3)$ is finitely
generated if and only if $CH^2(V)/2$ is finite.
\end{proof}

\begin{subremark}
If $V$ is a 3-fold defined over $\C$, the Witt group $W(V)$ is an
algebra over $W(\C)=\Z/2$, and no group extension issues arise.
\end{subremark}

For $WR(V_\top)$ of 3-folds, we need a new ingredient:
the cohomology operation $\tilde\beta Sq^2$
from $H^2(X,\Z/2)$ to $H^5(X,\Z)$,
where $\tilde\beta$ is the integral Bockstein.
By \cite[5.9]{RGB}, the kernel of $\tilde\beta Sq^2$ is
the set of all Stiefel--Whitney classes $u=w_2(E)\in H^2(Y,\Z/2)$.

Since $H^5(\C\bP^\infty\!,\Z)=0$ and $K(\Z,2)=\C\bP^\infty$,
$\tilde\beta Sq^2$ vanishes on the mod~2 reduction of $H^2(X,\Z)$.
Hence the following definition makes sense.

\begin{definition}\label{barH2}
We define $\bar{H}^2(Y)$ to be the kernel of the cohomology operation
${_2}H^3(Y,\Z)\to H^5(Y,\Z)$ induced by $\tilde\beta Sq^2$.
\end{definition}

The following result and its proof is a modification of 
the argument we gave in \cite[7.1]{RGB} for 4-dimensional spaces.

\begin{theorem}\label{C-3fold}
If $Y$ is a connected 6-dimensional CW complex, 
\[
WR(G\times Y) \cong \Z/2\times H^1(Y,\Z/2)\times \bar{H}^2(Y).
\]
\end{theorem}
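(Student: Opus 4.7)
The plan is to identify $WR(G\times Y)$ with the cokernel of the realification map $r\colon KU(Y)\to KO(Y)$, exploiting $KR(G\times Y)=KU(Y)$ and $KO_G(G\times Y)=KO(Y)$ as in Example \ref{trivial action}, and then to compute this cokernel via the map of Atiyah--Hirzebruch spectral sequences induced by $r$. For a connected $6$-dimensional $Y$, the only nonzero $E_2^{p,-p}$ terms on the $p+q=0$ antidiagonal are, for $KO$: $H^0(Y,\Z)$, $H^1(Y,\Z/2)$, $H^2(Y,\Z/2)$, $H^4(Y,\Z)$; and for $KU$: $H^0(Y,\Z)$, $H^2(Y,\Z)$, $H^4(Y,\Z)$, $H^6(Y,\Z)$.

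Next, I would analyze the differentials. The $d_2$'s into and out of the relevant antidiagonal spots all vanish (because $Sq^2$ acts trivially on $H^{\le 1}$ and because adjacent rows are zero in this range). On the $KU$ side, the only nontrivial differential for us is $d_3=\tilde\beta\,Sq^2\rho$ on $E_3^{2,-2}(KU)=H^2(Y,\Z)$ landing in $H^5(Y,\Z)$, yielding $E_\infty^{2,-2}(KU)=\ker(\tilde\beta\,Sq^2\rho)$. On the $KO$ side, the analogous $d_3$ on $E_3^{2,-2}(KO)=H^2(Y,\Z/2)$ landing in the $\Z$-coefficient row $H^5(Y,\Z)$ is $\tilde\beta\,Sq^2$, giving $E_\infty^{2,-2}(KO)=\ker(\tilde\beta\,Sq^2)$; no differentials reach $E^{4,-4}$ in this range, so $E_\infty^{4,-4}(KO)=E_\infty^{4,-4}(KU)=H^4(Y,\Z)$, and $E_\infty^{1,-1}(KO)=H^1(Y,\Z/2)$. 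At the coefficient level $r\colon KU^q(\pt)\to KO^q(\pt)$ is multiplication by~$2$ for $q=0$, the identity for $q=-4$, and mod-$2$ reduction $\rho$ for $q=-2$.

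Assembling these, the cokernel of $r$ on each filtration quotient is: $\Z/2$ at $(0,0)$, $H^1(Y,\Z/2)$ at $(1,-1)$, zero at $(4,-4)$ and $(6,-6)$. The core of the argument is $(2,-2)$: since $\tilde\beta\,Sq^2$ vanishes on $\rho H^2(Y,\Z)$ by the observation preceding Definition \ref{barH2}, the image of $r$ on $E_\infty^{2,-2}$ is exactly $\rho H^2(Y,\Z)\subseteq \ker(\tilde\beta\,Sq^2)$, and the integral Bockstein induces an isomorphism $H^2(Y,\Z/2)/\rho H^2(Y,\Z)\cong {}_2H^3(Y,\Z)$ intertwining the two instances of $\tilde\beta\,Sq^2$. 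Hence the cokernel at $(2,-2)$ is $\bar H^2(Y)$. All three graded pieces are $\Z/2$-vector spaces and $WR(G\times Y)$ is a $\Z/2$-algebra, so the resulting filtration is automatically split, giving the claim. The main obstacle I anticipate is the careful bookkeeping of the $d_3$ differentials (and the vanishing of higher $d_r$'s in the 6-dimensional range) on both spectral sequences, together with confirming that the $E_3$-map induced by $r$ really is mod-$2$ reduction; once that is in hand the extension problem is trivial.
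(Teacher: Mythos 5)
Your proposal is correct and follows essentially the same route as the paper's proof: identify $WR(G\times Y)$ with $\coker(KU(Y)\to KO(Y))$ and compare the Atiyah--Hirzebruch filtrations, with the only nontrivial point being the $d_3=\tilde\beta Sq^2$ differential on $E_3^{2,-2}(KO)$ versus the vanishing differential on $E_3^{2,-2}(KU)$, so that the cokernel in filtration $2$ is $\ker(\tilde\beta Sq^2)/\rho H^2(Y,\Z)\cong\bar H^2(Y)$. Your explicit remarks on the identification $H^2(Y,\Z/2)/\rho H^2(Y,\Z)\cong{}_2H^3(Y,\Z)$ and on the splitting of the filtration (everything in sight has exponent $2$) are correct and only make explicit what the paper leaves implicit.
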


\begin{proof}
Recall that $WR(G\times Y)$ is the cokernel of $KU(Y)\to KO(Y)$.
We compare the filtrations on $KU(Y)$ and $KO(Y)$ associated to
their Atiyah-Hirzebruch spectral sequences.
Since the quotient $KU(Y)/F_2KU(Y) = \Z$ injects into 
\[ KO(Y)/F_2KO(Y)=\Z\times H^1(Y,\Z/2), \]
we see that the cokernel of $F_2KU(Y)\to F_2KO(Y)$ is the kernel
of the surjection $WR(G\times Y)\to\Z/2\times H^1(Y,\Z/2)$.
Since $E_2^{p,-p}(KO)=0$ for $p=3$ and $p>4$, and the map
\[
E_\infty^{4,-4}(KU)\cong H^4(Y,\Z) \to H^4(Y,\Z)\cong E_\infty^{4,-4}(KO)
\]
is onto (because $KU^{-4}\to KO^{-4}$ is onto), 
we are left to consider $F_2/F_3=E_\infty^{2,-2}$ for $KU(Y)$ and $KO(Y)$.

The differential $d_3:E_2^{2,-2}(KU)\to E_2^{5,-4}(KU)$ is
a cohomology operation $H^2(Y,\Z)\to H^5(Y,\Z)$. 
As noted above, it must be zero,
so $E_\infty^{2,-2}(KU)\cong H^2(Y,\Z)$.
Similarly, the differential $d_3:H^2(Y,\Z/2)\to H^5(Y,\Z)$
is the cohomology operation $\tilde\beta Sq^2$, so 
$E_\infty^{2,-2}(KO)$ is the subgroup of Stiefel--Whitney classes
$w_2(E)$ in $H^2(Y,\Z/2)$, and the cokernel of 
$E_\infty^{2,-2}(KU)\to E_\infty^{2,-2}(KO_G)$ 
is the group $\bar{H}^2(Y)$ of Definition \ref{barH2}.
\end{proof}

\begin{corollary}
Let $V$ be a smooth 3-fold over $\C$, and set $Y=V(\C)$.
If $H^5(Y,\Z)\!$ is a torsionfree group, then
\[ WR(V_\top) \cong \Z/2\times H^1(Y,\Z/2)\times {_2}H^3(Y,\Z). \]
\end{corollary}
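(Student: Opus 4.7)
The plan is to reduce the corollary directly to Theorem \ref{C-3fold} and then analyze the operation defining $\bar{H}^2(Y)$ under the torsionfree hypothesis. Since $V$ is defined over $\C$, the $G$-space $V_\top$ is $G\times Y$ where $Y = V(\C)$ (as recalled at the start of Section \ref{sec:C}). Because $V$ is a 3-fold over $\C$, $Y$ has real dimension $6$, so the hypotheses of Theorem \ref{C-3fold} are met and we obtain
\[
WR(V_\top) \;\cong\; \Z/2\times H^1(Y,\Z/2)\times \bar{H}^2(Y).
\]

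The task is then to identify $\bar{H}^2(Y)$ with ${_2}H^3(Y,\Z)$ when $H^5(Y,\Z)$ is torsionfree. I would unpack Definition \ref{barH2}: the operation $\tilde\beta Sq^2: H^2(Y,\Z/2)\to H^5(Y,\Z)$ vanishes on the mod~2 reduction of $H^2(Y,\Z)$ (as explained in the paragraph preceding \ref{barH2}), so by the short exact Bockstein sequence
\[
0 \to H^2(Y,\Z)/2 \to H^2(Y,\Z/2) \to {_2}H^3(Y,\Z) \to 0,
\]
it factors through a map $\tilde\beta Sq^2: {_2}H^3(Y,\Z) \to H^5(Y,\Z)$, whose kernel is by definition $\bar{H}^2(Y)$.

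The key observation, and really the whole point, is that ${_2}H^3(Y,\Z)$ is a group of exponent~$2$, while by hypothesis $H^5(Y,\Z)$ is torsionfree; any homomorphism from a $2$-torsion group to a torsionfree abelian group is zero. Hence the induced map vanishes and $\bar{H}^2(Y) = {_2}H^3(Y,\Z)$. Substituting back into the isomorphism from Theorem \ref{C-3fold} yields the claimed formula. There is no real obstacle here beyond correctly recording the factorization of $\tilde\beta Sq^2$ through ${_2}H^3(Y,\Z)$; the corollary is essentially a direct specialization of Theorem \ref{C-3fold} to the torsionfree case.
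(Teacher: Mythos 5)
Your proposal is correct and takes essentially the same route as the paper: both apply Theorem \ref{C-3fold} to $G\times Y$ and conclude that $\bar{H}^2(Y)={_2}H^3(Y,\Z)$ because the operation induced by $\tilde\beta Sq^2$ lands in the torsionfree group $H^5(Y,\Z)$ while being forced to have $2$-torsion image (you phrase this via the exponent-$2$ source after factoring through the Bockstein sequence, the paper via the exponent-$2$ image, which is the same observation).
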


\begin{proof}
The image of $\tilde\beta Sq^2:H^2(Y,\Z/2)\to H^5(Y,\Z)$ has exponent~2.
It must be zero when $H^5(Y,\Z)$ is torsionfree.
Now apply \ref{C-3fold} to $G\times Y$.
\end{proof}

\begin{example}\label{ex:Totaro}
In particular, if $V$ is an abelian 3-fold over $\C$, then 
$H^1(V(\C),\Z/2)\cong(\Z/2)^6$ and $WR(V_\top)\cong(\Z/2)^7$\!.
This contrasts with Totaro's result \cite{Totaro} 
that $W(V)$ can be infinite for very general $V$.
\end{example}

We now consider a connnected 6-dimensional complex over $\R$
on which $G$ acts freely.
We need the following modification of Definition \ref{barH2}.
We will see in the proof of Theorem \ref{R-3fold} below that
the composition of 
$\H_G^2(X,\Z(1))\!\cong\! H^2(X/G,\Z(1))\to H^2(X/G,\Z/2)$ 
with $\tilde\beta Sq^2$ is zero.

\begin{definition}\label{barH2'}
When $G$ acts freely on $X$ and $Y\!=\!X/G$, 
we define $\bar{H}_G^2(X)$ to be the kernel of the 
cohomology operation ${_2}H^3(Y,\Z(1))\!\to H^5(Y,\Z)$ 
induced by $\tilde\beta Sq^2$.
\end{definition}

Here is the analogue of Theorem \ref{WR:pg=0} for 3-folds over $\R$.
It is a modification of the argument we gave in \cite[7.1]{RGB} 
for 4-dimensional spaces; see Theorem 8.1 of \cite{RGB}.
As in Lemma \ref{WR(graph)}, $\tH^1(X/G,\Z/2)$ denotes the
quotient of $H^1(X/G,\Z/2)$ by the subgroup generated by 
$[-1]=w_1(X\times\R(1))$.

\begin{theorem}\label{R-3fold}
Let $X$ be a connected 6-dimensional $G$-CW complex.
If $G$ acts freely on $X$, then $WR(X)$ is an extension:
\[
0 \to \bar{H}_G^2(X) \to WR(X) \to \Z/4\times\tH^1_G(X/G,\Z/2) \to0,
\]
%
In particular, $WR(X)$ is a $\Z/8$-algebra.
\end{theorem}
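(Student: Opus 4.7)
The plan is to mimic the proof of Theorem \ref{WRX-free} by comparing the Bredon / Atiyah--Hirzebruch spectral sequences (AHSS) of $KR(X)$ and $KO_G(X)$, extending the analysis through filtration degree $6$. Set $Y = X/G$. Since $G$ acts freely, $KO_G^*(X) \cong KO^*(Y)$ and its Bredon SS is the AHSS for $KO(Y)$. The Bredon SS for $KR(X)$ is an AHSS for twisted complex $K$-theory on $Y$: at a free orbit, $KR^q(G) = KU^q(\pt)$ with $G$ acting by complex conjugation, hence as $(-1)^n$ on $KU^{-2n} = \Z$. Thus $E_2^{p,q}(KR) = H^p(Y, KU^q)$ where $KU^{-2}$ corresponds to $\Z(1)$ and $KU^{-4}$ to $\Z$ as local systems on $Y$.

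The quotient $F_0 WR(X) / F_2 WR(X) \cong \Z/4 \times \tH^1(Y, \Z/2)$ is handled exactly as in Theorem \ref{WRX-free} and Lemma \ref{Z/4}, since it depends only on $E_\infty^{0,0}$ and $E_\infty^{1,-1}$, which are independent of $\dim X \geq 2$. The main new content is identifying $F_2 WR(X) \cong \bar{H}_G^2(X)$. On the $KO_G$ side, $d_3: E_3^{2,-2}(KO_G) \to E_3^{5,-4}(KO_G)$ is $\tilde\beta Sq^2: H^2(Y, \Z/2) \to H^5(Y, \Z)$, as in the proof of Theorem \ref{C-3fold}, so $E_\infty^{2,-2}(KO_G) = \ker(\tilde\beta Sq^2)$. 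On the $KR$ side, $E_2^{2,-2}(KR) = H^2(Y, \Z(1)) = \H_G^2(X, \Z(1))$; since Kahn's equivariant first Chern class (Definition \ref{Chern}) gives a surjection $c_1: KR(X) \twoheadrightarrow \H_G^2(X, \Z(1))$, every class in $E_2^{2,-2}(KR)$ is a permanent cycle, so the twisted $d_3$ vanishes and $E_\infty^{2,-2}(KR) = H^2(Y, \Z(1))$. The map between them is mod-$2$ reduction, whose cokernel is ${_2}H^3(Y, \Z(1))$ by the Bockstein sequence for $0 \to \Z(1) \xrightarrow{2} \Z(1) \to \Z/2 \to 0$. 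Intersecting with $\ker(\tilde\beta Sq^2) \subseteq H^2(Y,\Z/2)$ gives exactly $\ker(\tilde\beta Sq^2: {_2}H^3(Y, \Z(1)) \to H^5(Y, \Z)) = \bar{H}_G^2(X)$.

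To conclude, I would verify that the remaining filtration pieces vanish: $E_\infty^{p,-p}(KO_G) = 0$ for $p = 3, 5, 6$ because $KO^{-3} = KO^{-5} = KO^{-6} = 0$, and at $p = 4$ the comparison $E_\infty^{4,-4}(KR) = H^4(Y, \Z) \to E_\infty^{4,-4}(KO_G) = H^4(Y, \Z)$ is surjective (via the realification $KU^{-4} \to KO^{-4}$), contributing nothing to $WR(X)$. The $\Z/8$-algebra structure is then immediate from the extension: the quotient has exponent $4$ and the kernel has exponent $2$, so the total exponent divides $8$. The main obstacle is Step 2: rigorously establishing the vanishing of the twisted $d_3$ on $H^2(Y, \Z(1))$ via Kahn's Chern class surjection, and confirming that no further differentials or extension problems disturb the identification $F_2 WR(X) \cong \bar{H}_G^2(X)$ (in particular that the factorization of $\tilde\beta Sq^2$ through ${_2}H^3(Y,\Z(1))$ is itself a consequence of this $d_3$-vanishing, which is how Definition \ref{barH2'} makes sense).
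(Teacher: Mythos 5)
Your proposal is correct and follows essentially the same route as the paper: reduce to the filtration comparison between the Bredon spectral sequences for $KR(X)$ and $KO_G(X)\cong KO(Y)$, handle $F_0/F_2$ as in Theorem \ref{WRX-free}, kill the twisted $d_3$ on $E_2^{2,-2}(KR)=H^2(Y,\Z(1))$ via the surjectivity of Kahn's Real Chern class, identify the cokernel at $E_\infty^{2,-2}$ with $\bar{H}_G^2(X)$, and check that filtrations $3$ through $6$ contribute nothing. The only cosmetic difference is your packaging of the $KR$-side spectral sequence as a twisted $KU$-AHSS on $Y$, which is just a restatement of the coefficient-system description in Example \ref{ex:coeff}(b).
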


\begin{proof}
Since $X$ is a free $G$-space, $KO_G(X)=KO(X/G)$.
To determine $KR(X)$, it is convenient to write $Y$ for $X/G$.
As in the proof of Theorem \ref{C-3fold}, we work with the
filtration coming from the Bredon spectral sequence.
By Lemma \ref{H^q_G}, $E_2^{1,-1}(KR)\cong H^1(X^G,\Z/2)=0$.
Thus $KR(X)/F_2KR(X)=\Z$ injects into 
\[ KO_G(X)/F_2KO_G \cong KO(Y)/F_2KO(Y)=\Z\times H^1(Y,\Z/2), \]
and the kernel of the surjection 
$WR(X)\to \Z/4\times \tH^1(Y,\Z/2)$ is the
cokernel of $F_2KR(X)\to F_2KO_G(X)\cong F_2KO(Y)$.

Since $E_2^{p,-p}(KO)=0$ for $p>4$, $F_5KO(Y)=0$ and
$F_4KO(Y)\cong E_\infty^{4,-4}$.
For both $KR$ and $KO_G$, $E_3^{3,-3}=0$ and
the group $E_\infty^{4,-4}$ 
is a quotient of $H^4(Y,\Z)$. In addition, the map 
\[
E_2^{4,-4}(KR)\cong H^4(Y,\Z) \to H^4(Y,\Z)\cong E_2^{4,-4}(KO_G)
\]
is an isomorphism (because $KU^{-4}\to KO^{-4}$ is). Thus
$F_3KR(X)$ maps onto $F_3KO(Y)$, i.e., $F_3WR(X)=0$.
We are left to consider $F_2/F_3=E_\infty^{2,-2}$ 
for $KR(X)$ and $KO(Y)$.  We have a commutative diagram
\begin{equation}\label{E(2,-2)}
\xymatrix@R=1.5em{
0 \to E_\infty^{2,-2}(KR)\ar[d]\ar[r]&
H^2(Y,\Z(1))\ar[d]\ar[r]^{d_3}& H^5(Y,\Z)\ar[d]^{\cong} \\
0 \to E_\infty^{2,-2}(KO_G)\ar[r]& H^2(Y,\Z/2)\ar[r]^{d_3}& H^5(Y,\Z).}
\end{equation}
%
As in the proof of Theorem \ref{C-3fold}, the differential
$d_3:H^2(Y,\Z/2)\to H^5(Y,\Z)$
is the cohomology operation $\tilde\beta Sq^2(u)$, so its kernel
$E_\infty^{2,-2}(KO)$ is the subgroup 
$\{u\in H^2(Y,\Z/2)\ \vert\ \tilde\beta Sq^2(u)=0\}$.

We claim that the differential
$d_3:H^2(Y,\Z(1))\to H^5(Y,\Z)$ is zero in the top row in \eqref{E(2,-2)},
so that $E_\infty^{2,-2}(KR)\cong E_2^{2,-2}(KR)= H^2(Y,\Z(1))$.
This will imply that the cokernel of 
\[ E_\infty^{2,-2}KR(X)\to E_\infty^{2,-2}KO_G(X) \]
is the subgroup $\bar{H}_G^2(Y)$ of Definition \ref{barH2'},
finishing the proof.

The Real Chern class $c_1$ of Definition \ref{Chern} is the
composition of the map 
$F_2KR(X)\to E_\infty^{2,-2}\subseteq H_G^2(X,\Z(1))\cong H^2(Y,\Z(1))$ 
with the isomorphism $H_G^2(X,\Z(1))\cong\H_G^2(X,\Z(1))$
of \ref{ex:coeff}(c). Since $c_1$ is onto, the differential
$d_3:H^2(Y,\Z(1))\to H^5(Y,\Z)$ is zero, as claimed,
\end{proof}

\begin{subremark}
The fact that $WR(X)$ is a $\Z/8$--algebra can also be deduced from 
the considerations of Section 7 in \cite{KSW}, which are valid 
for any CW-complex with a free $G$-action, the torsion being a 
specific function of the dimension. 
\end{subremark}

Now suppose that $V$ is a 3-fold over $\R$ with no real points.

\begin{corollary}\label{WR-3fold}
If $V$\! is a geometrically connected 3-fold over $\R$, 
with no $\R$-points, then
$WR(V_\top)$ is an extension of $\Z/4\times\tH^1_\et(V,\Z/2)$ by
the subquotient $\bar{H}_G^2$ of $H^2_\et(V,\Z/2)$ defined in
\ref{barH2'}.

The kernel and cokernel of $W(V)\to WR(V_\top)$ have exponent~2, and 
are the same as the kernel and cokernel of $I_2(V)\to\bar{H}_G^2$.
\end{corollary}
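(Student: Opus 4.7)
The plan is to combine Theorem~\ref{R-3fold}, Cox's theorem, and the comparison Corollary~\ref{iso-modI2} via a snake-lemma argument.

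Set $X=V_\top$. Because $V$ is geometrically connected with $V(\R)=\emptyset$, the $G$-space $X$ is connected, 6-dimensional, and $G$ acts freely on $X$. Applying Theorem~\ref{R-3fold} yields the extension
\[
0 \to \bar H_G^2(X) \to WR(X) \to \Z/4 \times \tH^1(X/G, \Z/2) \to 0.
\]
Since $X^G=\emptyset$, Example~\ref{Borel} gives $\H_G^n(X, \Z/2)\cong H^n(X/G, \Z/2)$, which by Cox's theorem is isomorphic to $H^n_\et(V, \Z/2)$. This identifies $\tH^1(X/G, \Z/2)$ with $\tH^1_\et(V, \Z/2)$ and realizes $\bar H_G^2$ as a subquotient of $H^2_\et(V, \Z/2)$, establishing the first assertion.

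For the second assertion, I would apply the snake lemma to the commutative square connecting $0 \to I_2(V) \to W(V) \to W(V)/I_2(V) \to 0$ and $0 \to I_2(X) \to WR(X) \to WR(X)/I_2(X) \to 0$ via the natural map $W(V) \to WR(X)$. Corollary~\ref{iso-modI2} makes the rightmost vertical map an isomorphism. Moreover, inspecting the proof of Theorem~\ref{R-3fold} shows that the surjection onto $\Z/4 \times \tH^1$ in the extension above is precisely the $(\mathrm{rank},w_1)$-map of Lemma~\ref{Z/4}, whence $I_2(X) = \bar H_G^2$. The snake lemma then identifies $\ker(W(V)\to WR(X))$ with $\ker(I_2(V)\to \bar H_G^2)$, and likewise for cokernels.

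The cokernel has exponent $2$ because it is a quotient of $\bar H_G^2 \subseteq H^2_\et(V, \Z/2)$. The main obstacle is the exponent-$2$ claim for the kernel. By Theorem~\ref{thm:Hasse}, the map $I_2(V) \to \bar H_G^2$ factors as $I_2(V) \to I_2/I_3 \hookrightarrow {}_2\Br(V) \map{\beta} \bar H_G^2$, and Krasnov's sequence~\eqref{Krasnov} identifies the kernel of $\beta$ on ${}_2\Br(V)$ with $(\Z/2)^{\rho_0}$. Consequently $\ker(I_2(V)\to \bar H_G^2)$ contains $I_3(V)$ (of exponent $2$, since $I_3(V)\hookrightarrow H^0(V,\cH^3)\subseteq H^3(F,\Z/2)$) and has quotient-over-$I_3$ of exponent $2$; verifying that this 2-step filtration collapses to exponent $2$ rather than merely exponent $4$ is where the real work lies, and I would attempt it by showing that the resulting extension of $\Z/2$-vector spaces splits.
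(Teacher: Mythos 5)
Your proposal follows the paper's proof essentially step for step: the first assertion is read off from Theorem \ref{R-3fold} together with the identification $\H_G^*(X,\Z/2)\cong H^*(X/G,\Z/2)\cong H^*_\et(V,\Z/2)$ (free action plus Cox), and the second assertion comes from the snake lemma applied to the map of short exact sequences whose right-hand vertical arrow is the isomorphism of Corollary \ref{iso-modI2}. Your extra remark that the surjection in Theorem \ref{R-3fold} must be matched with the $(\mathrm{rank},w_1)$-quotient, so that $I_2(X)=\bar{H}_G^2$, is a point the paper leaves implicit but which is genuinely needed for the displayed diagram to commute.

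The one step you leave open --- that the kernel of $I_2(V)\to\bar{H}_G^2$ has exponent $2$ rather than merely exponent $4$ --- receives no further justification in the paper either: its entire proof is the diagram you describe followed by ``the result follows from the snake lemma.'' Your analysis of why this is not automatic is accurate: since $\nu=0$ forces $I_4(V)=0$ (it is torsionfree inside a torsion group), the kernel is an extension of a subgroup of ${}_2\Br(V)$ by $I_3(V)\hookrightarrow H^0(V,\cH^3)$, both of exponent $2$, and the snake lemma alone only bounds the exponent by $4$. Concretely, for $x$ in the kernel one must still check that $2x=\langle1,1\rangle\cdot x$, whose class in $I^3(F)/I^4(F)\cong H^3_\et(F,\Z/2)$ is $\{-1\}\cup\mathrm{Hasse}(x)$, actually vanishes given that $\mathrm{Hasse}(x)$ lies in $\ker(\beta)\cong(\Z/2)^{\rho_0}$, i.e.\ in the divisible part of $\Br(V)$. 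So you have not missed an argument that is present in the paper; you have located a point where the paper is terse, and your proposed strategy (splitting the resulting extension of $\Z/2$-vector spaces, or equivalently killing the cup product with $\{-1\}$ on $\ker\beta$) is a reasonable way to try to fill it.
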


\begin{proof}
Setting $X=V_\top$, the first assertion is Theorem \ref{R-3fold}. 
Combining this with Corollary \ref{iso-modI2},
we get a commutative diagram with exact rows:
\begin{equation*}\xymatrix@R=1.5em{
0 \to I_2(V)\quad\ar[d]\ar[r]& W(V)\ar[d]\ar[r] 
                  & \Z/4\times \tH_\et^1(V,\Z/2)\ar[d]^{\cong} \to 0 \\
\kern-19pt 0 \to \bar{H}_G^2 \ar[r] & WR(X)\ar[r]
                  & \Z/4\times \tH_G^1(X,\Z/2) \to 0.}
\end{equation*}
\noindent The result follows from the snake lemma.
\end{proof}

We now turn our attention to the anisotropic quadric 3-fold $Q_3$
defined by $\sum_{i=0}^4 x_i^2=0$.
For notational simplicity, let $X$ denote the $G$-manifold $Q_{3,\top}$.
Colliot-Th\'el\`ene and Sujatha \cite[3.4]{CTSujatha}
proved that $W(Q_3)\!\cong\Z/8$, with $I_2(Q_3)\cong\Br(Q_3)\cong\Z/2$; 
we will show that $WR(X)\cong\Z/8$ as well.

\goodbreak
\begin{lemma}\label{Br(Q3)}
$\Z/2\cong\Br(Q_3) \to {_2}\H_G^3(X,\Z(1))$ is an isomorphism.
\end{lemma}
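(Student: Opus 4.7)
The plan is to compute $\H_G^3(X,\Z(1))\cong\Z/2$ directly via the Borel spectral sequence, and then identify the image of $\Br(Q_3)$ under the Bockstein.

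I would begin by identifying $H^*(X,\Z)$ as a graded $G$-module, where $X=Q_{3,\top}$. The smooth complex quadric threefold $Q_3(\C)$ has a cell structure with one cell in each even real dimension $0,2,4,6$, so $H^*(X,\Z)=\Z[\omega]/(\omega^4)$ with $\omega=c_1(\cO(1))\in H^2$. Since $\cO(1)=\cO_{\bP^4_\R}(1)|_{Q_3}$ is a Real line bundle in the sense of Atiyah, $\sigma^*\omega=-\omega$, so $\sigma$ acts on $H^{2k}(X,\Z)$ by $(-1)^k$. Equivalently, $H^q(X,\Z(1))$ equals $\Z(1),0,\Z,0,\Z(1),0,\Z$ for $q=0,1,\dots,6$.

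Next, in the Borel spectral sequence $E_2^{p,q}=H^p(G,H^q(X,\Z(1)))\Rightarrow \H_G^{p+q}(X,\Z(1))$, the only nonzero term of total degree $3$ is $E_2^{3,0}=H^3(G,\Z(1))=\Z/2$: the odd cohomology of $X$ vanishes, and $H^p(G,\Z)=0$ for $p$ odd. The only differential that could affect this term is $d_3\colon E_3^{0,2}\to E_3^{3,0}$ from $E_2^{0,2}=H^2(X,\Z(1))^G=\Z\cdot\omega$. But $\omega$ lifts to the Real first Chern class $c_1(\cO(1))\in \H_G^2(X,\Z(1))$ of Definition \ref{Chern}, so it is a permanent cycle; as it generates $E_2^{0,2}$, we conclude $d_3=0$ and hence $\H_G^3(X,\Z(1))\cong\Z/2$.

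Finally, to show that the Bockstein $\Br(Q_3)\to {_2}\H_G^3(X,\Z(1))$ is nonzero, I would apply the long exact sequence associated to $0\to\Z(1)\to\Z(1)\to\Z/2\to 0$:
\[
\H_G^2(X,\Z(1))\ \map{\mathrm{mod}\,2}\ \H_G^2(X,\Z/2)\ \map{\beta}\ \H_G^3(X,\Z(1)).
\]
By Kummer theory and Cox's theorem, the middle group $H^2_\et(Q_3,\mu_2)\cong \H_G^2(X,\Z/2)$ sits in a short exact sequence $0\to\Pic(Q_3)/2\to \H_G^2(X,\Z/2)\to {_2}\Br(Q_3)\to 0$, with $\Pic(Q_3)/2\cong\Br(Q_3)\cong\Z/2$. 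The left term $\H_G^2(X,\Z(1))=\Z$ is generated by $c_1(\cO(1))$, whose mod-$2$ reduction represents $\cO(1)\in\Pic(Q_3)/2$; hence the image of mod-$2$ reduction is exactly $\Pic(Q_3)/2$. By exactness $\ker\beta=\Pic(Q_3)/2$, so $\beta$ descends to an injection $\Br(Q_3)\hookrightarrow {_2}\H_G^3(X,\Z(1))$, which is an isomorphism since both sides are $\Z/2$.

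The main obstacle is justifying the vanishing of $d_3$ in the Borel spectral sequence; this rests on identifying $\omega$ with the reduction of the Real Chern class $c_1(\cO(1))$, which is a genuine class in $\H_G^2(X,\Z(1))$ and hence a permanent cycle. Once this is accepted, the remainder of the argument is formal, combining the spectral sequence calculation with standard Kummer theory.
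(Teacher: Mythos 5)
Your argument is correct, but it reaches ${}_2\H_G^3(X,\Z(1))\cong\Z/2$ by a genuinely different route than the paper. The paper never computes integral Borel cohomology from the topology of $Q_3(\C)$: it starts from Colliot-Th\'el\`ene--Sujatha's facts $\Pic(Q_3)=\Z$ and $\Br(Q_3)\cong\Br(\R)\cong\Z/2$, uses Kummer theory to compute $H^p_\et(Q_3,\mu_{2^k})$ for $p\le 2$, transports these to $\H_G^p(X,\Z(1)/2^k)$ by Cox's theorem, and then reads off $\H_G^1(X,\Z(1))=\Z/2$, $\H_G^2(X,\Z(1))=\Z$ and ${}_2\H_G^3(X,\Z(1))=\Z/2$ from the coefficient exact sequences; the isomorphism with ${}_2\Br(Q_3)$ then follows because the Bockstein is onto ${}_2\H_G^3(X,\Z(1))$ and factors through the surjection $H^2_\et(Q_3,\Z/2)\to{}_2\Br(Q_3)$. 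You instead compute $\H_G^*(X,\Z(1))$ directly from $H^*(Q_3(\C),\Z)$ (a copy of $\Z$ in each even degree $0,\dots,6$, with conjugation acting by $(-1)^k$ on $H^{2k}$) via the Borel spectral sequence, and you kill the only dangerous differential $d_3\colon E_3^{0,2}\to E_3^{3,0}$ by exhibiting $\omega$ as the restriction of Kahn's Real Chern class $c_1(\cO(1))\in\H_G^2(X,\Z(1))$ --- the same device the paper uses in the proof of Theorem \ref{R-3fold}, and a legitimate one here. Your route buys more: the full integral group $\H_G^3(X,\Z(1))\cong\Z/2$, not just its $2$-torsion, and a method that applies whenever the relevant even cohomology is spanned by classes of Real line bundles; the paper's route is shorter given the algebraic input and avoids all differential and Chern-class compatibility checks. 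Two small caveats in your write-up: $H^*(Q_3(\C),\Z)$ is not $\Z[\omega]/(\omega^4)$ as a ring (the quadric has degree $2$, so $\omega^2=2\ell$ and $\omega^3=2[\mathrm{pt}]$), but only the additive structure and the $G$-action enter, and your derivation of the action $(-1)^k$ survives because the groups are torsion-free; and your last step tacitly uses that the mod-$2$ reduction of $c_1(\cO(1))$ corresponds under Cox's isomorphism to the Kummer class of $\cO(1)$ in $\Pic(Q_3)/2\subset H^2_\et(Q_3,\Z/2)$ --- a standard compatibility (implicit in \eqref{Krasnov}), but it should be stated; alternatively, you could simply quote \eqref{Krasnov} for the factorization of the Bockstein through ${}_2\Br(Q_3)$ and conclude from its surjectivity, exactly as the paper does.
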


\begin{proof}
Colliot-Th\'el\`ene and Sujatha observed in \cite[p.\,9]{CTSujatha}
that $\Pic(Q_3)=\Z$ and $\Br(Q_3)\cong\Br(\R)\cong\Z/2$,
and observed that $G$ acts trivially on $\Pic(Q_3)$.
Set $m=2^k$.
From Kummer Theory, we deduce that $H_\et^p(Q_3,\mu_m)$ is
$\Z/2$ for $p=0,1$, and $\Z/m\oplus\Z/2$ for $p=2$.

By Cox'\ theorem, we can identify $H^p_\et(Q_3,\mu_m)$
with $\H_G^p(X,\Z(1)/m)$. Moreover, $\H_G^0(X,\Z(1))=0$,
and the groups $\H_G^p(X,\Z(1))$ are finitely generated.
Using the exact sequences 
\[
0\to \H_G^p(X,\Z(1))/m \to H^p_\et(Q_3,\mu_m) \to {_m}\H_G^{p+1}(X,\Z(1))\to 0,
\]
we see that $\H_G^1(X,\Z(1))=\Z/2$ and $H_G^2(X,\Z(1))=\Z$. From
\[
0 \to \H_G^2(X,\Z(1))/2 \to \H_G^2(X,\Z/2) 
\map{\partial} {_2}\H_G^3(X,\Z(1))\to 0
\]
we see that 
${_2}\H_G^3(X,\Z(1))=\Z/2$.
Since $\partial$ factors through the surjection
$H^2_\et(Q_3,\Z/2)\to {_2}\Br(Q_3)$, we see that 
${_2}\Br(Q_3)\cong{_2}\H_G^3(X,\Z(1))$
\end{proof}

Combining Lemma \ref{Br(Q3)} with Corollary \ref{WR-3fold}, we conclude:

\begin{corollary}
$W(Q_3) \cong WR(Q_{3,\top}) \cong \Z/8$.
\end{corollary}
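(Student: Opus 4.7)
The plan is to deduce this from Corollary \ref{WR-3fold} applied to $V=Q_3$, combined with Lemma \ref{Br(Q3)} and the algebraic calculation of Colliot-Thélène--Sujatha that $W(Q_3)\cong\Z/8$ with $I_2(Q_3)\cong\Br(Q_3)\cong\Z/2$.

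Setting $X=Q_{3,\top}$, Corollary \ref{WR-3fold} says that the kernel and cokernel of $W(V)\to WR(X)$ agree with those of the Hasse-induced map $I_2(V)\to\bar{H}_G^2$. Since $\bar{H}_G^2$ sits as a subgroup of ${_2}\H_G^3(X,\Z(1))$, Lemma \ref{Br(Q3)} identifies the target with $\Z/2$; in particular $\bar{H}_G^2$ is either $0$ or $\Z/2$. So the first step is to trace through this map and see that it is actually an isomorphism between the two copies of $\Z/2$.

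For this I would use Theorem \ref{thm:Hasse}, which asserts that the composition $GW(V)\to WR(X)\smap{\bar{w}_2}{_2}\H_G^3(X,\Z(1))$ equals the algebraic Hasse invariant followed by the integral Bockstein. Restricting to $I_2(V)$, the Hasse invariant gives exactly the isomorphism $I_2(Q_3)\cong {_2}\Br(Q_3)$, and Lemma \ref{Br(Q3)} (via its proof, where $\partial$ factors through ${_2}\Br(Q_3)\to {_2}\H_G^3(X,\Z(1))$) identifies this Bockstein image with all of ${_2}\H_G^3(X,\Z(1))\cong\Z/2$. Hence $I_2(V)\to\bar{H}_G^2\subseteq {_2}\H_G^3(X,\Z(1))$ is an isomorphism onto $\Z/2$; in particular $\bar{H}_G^2=\Z/2$ and the map has trivial kernel and cokernel.

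Applying Corollary \ref{WR-3fold} then yields $W(Q_3)\smap{\cong}WR(X)$, and the Colliot-Thélène--Sujatha calculation $W(Q_3)\cong\Z/8$ concludes.

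The main potential obstacle is bookkeeping: one must be sure that the Hasse-invariant/Bockstein map built from the Leray/Kummer machinery in the proof of Lemma \ref{Br(Q3)} coincides with the map $I_2(V)\to\bar{H}_G^2$ produced from the Bredon-filtration comparison underlying Corollary \ref{WR-3fold}. This is exactly the content of Theorem \ref{thm:Hasse}, so no new computation is required --- all that is needed is to check that on the single generator of $I_2(Q_3)$ the image in ${_2}\H_G^3(X,\Z(1))$ is nonzero, which is built into Lemma \ref{Br(Q3)}'s identification $\Br(Q_3)\smap{\cong}{_2}\H_G^3(X,\Z(1))$.
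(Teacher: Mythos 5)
Your proposal is correct and follows essentially the same route as the paper, which simply combines Lemma \ref{Br(Q3)} with Corollary \ref{WR-3fold} (together with the Colliot-Th\'el\`ene--Sujatha computation of $W(Q_3)$). You have merely made explicit the compatibility check via Theorem \ref{thm:Hasse} that the paper leaves implicit in the word ``combining.''
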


When $V$ is a geometrically connected 3-fold over $\R$,
and $V(\R)\ne\emptyset$, we saw in Proposition \ref{exp.2^d} that
the torsion subgroup of $I(V)$ has exponent $8$.
We have not yet thought about this case.

\newpage
\section{Fundamental Theorem for Witt groups}\label{sec:Gm}
\medskip

The Fundamental Theorem for Witt groups concerns Laurent polynomial extensions.
When $V=\Spec(A)$, and $A$ is a regular $\R$-algebra, 
the formula $W(A[t,1/t])\cong W(A)\oplus W(A)$ goes back to
the work of the first author 
\cite[pp.\,138--9]{MKlocalisation} and Ranicki \cite[4.6]{Ranicki}
in the 1970s. More recently, 
Balmer and Gille proved in \cite[9.13]{BalmerGille} that for smooth schemes
$V$ we have $W(V\times\Spec(\R[t,1/t]))\cong W(V)\oplus W(V)$. 

Since the $G$-space underlying $\Gm$ is equivariantly homotopy equivalent 
to $S^{1,1}$, we consider $WR(X\times S^{1,1})$. Since 
$WR(S^{1,1})\cong\Z\oplus\Z$ (by Lemma \ref{WR(graph)}) we have a
canonical external product map
\begin{equation}\label{eq:cup}
WR(X)\oplus WR(X) \cong WR(X)\otimes WR(S^{1,1}) 
\smap{\mu} WR(X\times S^{1,1}).
\end{equation}

When $X$ is a finite $G$-set, 
it follows from Lemma \ref{WR(graph)} that 
the product \eqref{eq:cup} is an isomorphism.
However, it is not true in general that 
$WR(X\times S^{1,1})\cong WR(X)\oplus WR(X)$,
even if $X=V_\top$, as Example \ref{SGm} shows.

Here is a sufficient condition. To state it, recall that sending
a Real vector bundle $E$ to its dual $E^*$ defines an action of
$\Z/2$ on $KR(X)$, and hence on $KR_{n}(X)=KR^{-n}(X)$ for all $n$.%
\footnote{The superscript notation $KR^*$ is used by Atiyah \cite{Atiyah},
but the subscript notation is more appropriate for our uses.}
Let $kr_{n}(X)$ (resp.\ $kr'_{n}(X)$) denote the even (resp.\ odd) Tate
cohomology of $\Z/2$ acting on the group $KR_{n}(X)$.
That is,
\[
kr_n(X)= H^2(\Z/2,KR_{n}(X)), \quad kr'_n(X)=H^1(\Z/2,KR_{n}(X)).
\]

\begin{theorem}\label{F.Thm}
If $kr_{-1}(X)=kr'_{-1}(X)=0$, 
(which is true in particular when $KR_{-1}(X)=0$), then
the product \eqref{eq:cup} is an isomorphism: 
\[
 W(X)\oplus WR(X)\ \map{\mu}\ WR(X\times S^{1,1}).
\]
More generally, the kernel of $\mu$ is a quotient of $kr_{-1}(X)$
and the cokernel of $\mu$ is a subgroup of $kr'_{-1}(X)$.
\end{theorem}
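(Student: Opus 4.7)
The plan is to set up a Mayer--Vietoris argument using the $G$-equivariant open cover $X\times S^{1,1}=V_+\cup V_-$, where $V_\pm=X\times(S^{1,1}\setminus\{\mp 1\})$. Each $V_\pm$ equivariantly retracts onto the fixed slice $X\times\{\pm 1\}\simeq X$, while the intersection $V_+\cap V_-$ is equivariantly homotopy equivalent to the free $G$-space $G\times X$. Since $G$ acts freely on $G\times X$, Real bundles on $G\times X$ are classified by complex bundles on $X':=X_{\mathrm{forget}}$, so $KR(G\times X)\cong KU(X')$ and $KO_G(G\times X)\cong KO(X')$, and the hyperbolic map $H$ on this piece becomes the non-equivariant hyperbolic map $KU(X')\to KO(X')$. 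This yields Mayer--Vietoris long exact sequences for $KR$ and for $KO_G$, linked by a commutative ladder whose vertical maps are $H$.

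The next step is to pass to $WR$ via Karoubi's twelve-term exact sequence $\cdots\to KR_n\to GR_n\to\uu{n}\to KR_{n-1}\to GR_{n-1}\to\cdots$ of \cite{Schlichting.Fund,KSW1}, which identifies $WR$ as the image of $GR\to\uu{-1}$, equivalently as $\ker(\uu{-1}\to KR_{-1})$. Combining this with the Mayer--Vietoris diagram produces a long exact sequence whose middle terms involve $WR(X\times S^{1,1})$ and $WR(X)^2$, and whose error terms involve the non-equivariant Witt group $W(X')=\coker(KU(X')\to KO(X'))$ together with a shift of $KR_{-1}(X)$ and $KU_{-1}(X')$.

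The decisive step is identifying these error terms with Tate cohomology. The classical tool is the cofiber sequence of $G$-pointed spaces $G_+\to S^0\to S^{1,1}$, which when smashed with the spectrum $\mathbf{KR}$ and combined with Atiyah's $(1,1)$-periodicity yields a long exact sequence
\[
\cdots\to KR_{n+1}(X)\to KR_n(X)\xrightarrow{f} KU_n(X')\xrightarrow{1-\sigma^*} KR_n(X)\to\cdots,
\]
where $\sigma^*$ is the duality involution $E\mapsto E^*$ on $KR$. From this sequence, the cokernel and kernel of the forgetful map $f$ compute the even and odd Tate cohomology of $KR_*(X)$ under $\sigma^*$, namely $kr_n(X)$ and $kr'_n(X)$. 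The same identification holds for the $GR$-tower, so the morphism of Mayer--Vietoris sequences is controlled by the same Tate cohomology on both sides of the ladder.

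Running the snake lemma on this ladder, the kernel of $\mu$ is expressed as a quotient of $kr_{-1}(X)$ (coming from the Mayer--Vietoris connecting map) and the cokernel as a subgroup of $kr'_{-1}(X)$; when $KR_{-1}(X)=0$ both Tate cohomologies vanish automatically, so $\mu$ is an isomorphism. The main obstacle will be the bookkeeping: matching the degree conventions across the Mayer--Vietoris, the twelve-term, and the Tate cofibration sequences, and verifying that every connecting map respects the duality $E\mapsto E^*$ so that the Tate identification transfers cleanly from the $KR$-level to the $WR$-level statement.
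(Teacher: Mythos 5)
Your covering of $X\times S^{1,1}$ by the two slices $V_\pm$ is a genuinely different starting point from the paper's, which works with the relative pair $(X\times S^{1,1},X)$ and decomposes $X$ (not the circle) into its fixed and free parts. But there is a gap exactly at what you call the decisive step. The Tate cohomology groups $kr_{-1}(X)$ and $kr'_{-1}(X)$ in the statement are formed with respect to the \emph{duality} involution $E\mapsto E^*$ on $KR_{-1}(X)$, and that involution enters the problem only through the Bott sequence of Hermitian $K$-theory \eqref{seq:GW[i]}, i.e.\ through the interaction of $KR_n$ with the towers $GR_n^{[i]}$. The cofiber sequence $G_+\to S^0\to S^{1,1}$ you invoke governs the relation between $KR(X)$ and $KU(X)$ via the \emph{conjugation} of complex bundles; it has the shape $KR^{n-1}(X)\smap{\eta} KR^n(X)\to KU^n(X)\to KR^{n+1}(X)$ and never sees the duality functor. (As written your sequence also does not typecheck: $1-\sigma^*$ is an endomorphism of $KU_n(X)$, not a map $KU_n(X)\to KR_n(X)$.) So the identification of your Mayer--Vietoris error terms with $kr_{-1}(X)$ and $kr'_{-1}(X)$ is asserted but not established, and the mechanism you propose cannot produce it.

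A second, structural problem is that $WR$ is the cokernel of a map of cohomology theories, not itself a cohomology theory, so it does not sit in a Mayer--Vietoris long exact sequence with ``$WR(X\times S^{1,1})$ and $WR(X)^2$ as middle terms''; one must carry the argument out at the level of $KR_n$ and $GR_n^{[i]}$ and extract $WR$ only at the end. That extraction is where the paper's real work lies: Theorem \ref{cup-u}, the cup-product isomorphism $GR_n(X)\cong \VR_{n+1}(X\times S^{1,1},X)$, proved by the four-case analysis ($X=X^G$, $X=G\times Y$, free actions, general $X$) together with the explicit computation of the generator $u=(E,\langle1\rangle,\langle t\rangle)$ over a point. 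Combined with Karoubi's twelve-term sequence $kr'_0\to WR{\,}'_0\to WR_0\to kr_0$ for the relative pair and Lemma \ref{KR(XxGm)}, this identifies the co-Witt group $WR{\,}'_0(X\times S^{1,1},X)$ with $GR(X)/\mathrm{im}\,KR(X)=WR(X)$ and yields the theorem. Your proposal contains no substitute for Theorem \ref{cup-u}; if you pursue the circle-cover route you will still need an analogue of it (or of Corollary \ref{C.6}) to identify the contribution of each slice and of the free intersection $G\times X$, at which point you are essentially reconstructing the paper's argument.
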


We will prove Theorem \ref{F.Thm} later in this section.
We first record:

\begin{lemma}\label{KR(XxGm)}
$KR_n(X\times S^{1,1}) \cong KR_n(X) \oplus KR_{n-1}(X)$.
\end{lemma}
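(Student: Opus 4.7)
The plan is to split off a $KR_n(X)$ summand using the $G$-fixed basepoint $1\in S^{1,1}$, then identify the complementary summand with $KR_{n-1}(X)$ by relating $S^{1,1}$ to Atiyah's Bott space $\bP$ via a single ordinary suspension.

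First I would observe that the constant map $S^{1,1}\to\{1\}$ is equivariant and splits the inclusion $\{1\}\hookrightarrow S^{1,1}$, giving a natural direct sum decomposition
\[
KR_n(X\times S^{1,1}) \cong KR_n(X) \oplus \widetilde{KR}_n(X_+\wedge S^{1,1}),
\]
where $S^{1,1}$ is regarded as pointed at $1$.

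Next I would identify $(S^{1,1},1)$ equivariantly with the one-point compactification $\R(1)^+$ of the sign representation (based at $\infty$): parametrizing $S^{1,1}\setminus\{1\}$ by $\theta\in(0,2\pi)$ via $e^{i\theta}$ sends complex conjugation to $\theta\mapsto 2\pi-\theta$, which is the negation action after translating the fixed point $\theta=\pi$ to $0$. Since $\R(1)\oplus\R = \R\oplus\R(1)=\C$ as $G$-representations, we obtain $\R(1)^+\wedge S^1 \cong \bP$, where $S^1=\R^+$ carries the trivial $G$-action and $\bP$ is the one-point compactification of $\C$ (topologically $\C\bP^1$ with the conjugation action), i.e., Atiyah's $(1,1)$-Bott space.

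Finally I would invoke Atiyah's $(1,1)$-Bott periodicity, which provides the natural isomorphism $\widetilde{KR}_n(Y\wedge\bP) \cong KR_n(Y)$ for compact $G$-spaces $Y$, together with the ordinary suspension isomorphism $\widetilde{KR}_n(Z\wedge S^1) \cong \widetilde{KR}_{n+1}(Z)$, which applies since $G$ acts trivially on $S^1$. Applied to $Y=X_+$ and $Z=X_+\wedge\R(1)^+$, the chain
\[
KR_n(X) \cong \widetilde{KR}_n(X_+\wedge\bP) \cong \widetilde{KR}_n(X_+\wedge\R(1)^+\wedge S^1) \cong \widetilde{KR}_{n+1}(X_+\wedge\R(1)^+)
\]
re-indexes to $\widetilde{KR}_n(X_+\wedge S^{1,1}) \cong KR_{n-1}(X)$, which combined with the first step proves the claim.

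The main point to watch is the degree bookkeeping: Atiyah's $(1,1)$-Bott isomorphism is degree-preserving, so the entire $-1$ shift in $KR_{n-1}(X)$ comes from the single ordinary $S^1$-suspension used to factor the Bott space $\bP = \R(1)^+\wedge S^1$. (An alternative, essentially equivalent, route would be to run the Mayer--Vietoris sequence for the equivariant cover of $S^{1,1}$ by two contractible open sets meeting in a free orbit $G$, and identify the connecting map via Bott; but the smash-product approach above is cleaner.)
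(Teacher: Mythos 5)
Your proof is correct and is essentially the paper's argument: the paper splits $KR_n(X\times S^{1,1})$ via the (split) exact sequence of the pair $(X\times B^{1,1},X\times S^{1,1})$ and identifies the complementary term with $KR_{n-1}(X)$ by Atiyah's Thom isomorphism for $E=X\times\R^{1,1}$, which is exactly the $(1,1)$-Bott periodicity you invoke. Your version merely repackages this, splitting at the fixed basepoint and extracting the degree shift from an explicit $S^1$-suspension of $\R(1)^+$ instead of from the connecting map of the disk pair.
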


\begin{proof}
Since $B^{1,1}$ retracts equivariantly to a point in $S^{1,1}$,
we have a (split) short exact sequence
\[
0 \to KR_n(X\times B^{1,1}) \to\! KR_n(X\times S^{1,1}) \to\!
     KR_{n-1}(X\times B^{1,1}\!,X\times S^{1,1})\! \to\!0.
\]
Since the Thom Isomorphism theorem \cite[2.4]{Atiyah},
identifies the final term with 
$KR_{n-1}(X^E\!,\pt)\cong KR_{n-1}(X)$, $E=X\times\R^{1,1}\!,$
the result follows.
\end{proof}

\begin{subex} 
When $X=S^{1,1}$, Lemma \ref{KR(XxGm)} yields 
$KR_{-1}(X)=KR_{-1}(\pt)\oplus KR_{-2}(\pt)=0$ 
and hence $WR(X\times S^{1,1})\cong WR(X)^2\cong\Z^4.$
We also have $W(\Gm \times \Gm)\cong WR(S^{1,1} \times S^{1,1})$ in this case.
\end{subex} 

Since Real vector bundles on $X$
form a Hermitian category, there is a classical Bott exact sequence
(see \cite[Thm.\,6.1]{Schlichting.Fund}): 
\begin{equation}\label{seq:GW[i]}
\to {GR_{n}^{[i-1]}}(X) \smap{F} KR_{n}(X)%
\overset{H}{\to }{GR_{n}^{[i]}}(X)\to GR_{n-1}^{[i-1]}(X) \smap{F}.
\end{equation}
The groups $GR_0(X)=GR_n^{[0]}(X)$ are the usual Grothendieck--Witt groups
of symmetric forms on Real vector bundles (see \cite[1.39]{Schlichting.Fund}),
and the groups $GR_n^{[2]}(X)$ are the Grothendieck--Witt groups of 
skew-symmetric forms.  
\goodbreak

For the rest of this section,  
we shall adopt the following terminology. 
If $F$ is a functor on $G$-spaces, we'll write $F(\R)$ and $F(\RX)$
for $F(S^{1,1},\pt)$ and $F(X\times S^{1,1},X)$. 
Thus $KR_n(\RX)\cong KR_{n-1}(X)$ by  \ref{KR(XxGm)}, and
Theorem \ref{F.Thm} asserts that $WR(\RX)\cong WR(X)$.

We will use the following description of the group $\VR_1(X)$,
viewed as the Grothendieck group of the forgetful functor $F$
(see \cite[II.2.13]{MKbook}).
An element is determined by a triple $(E,g_1,g_2)$, up to a suitable
homotopy equivalence. Here $E$ is a Real vector bundle on $X$ and 
the $g_i$ are symmetric bilinear forms on $E$.

Taking $i=n=1$ in \eqref{seq:GW[i]}, and using Lemma \ref{KR(XxGm)},
we get an exact sequence
\begin{equation}\label{eq:GR(RX)}
GR_1(\RX) \smap{F} KR_0(X) \smap{H} \VR_1(\RX)\smap{\partial} GR(\RX) 
\smap{}KR_{-1}(X).
\end{equation}
\begin{example}\label{ex:pt-cup-u}
When $X$ is a point, the group $\VR_1(\R)$ has two special elements:
$u=(E,\langle1\rangle,\langle t\rangle)$ and 
$u^-=(E,\langle-1\rangle,\langle -t\rangle)$, where
$E$ is the bundle $S^{1,1}\times \C$ and
$\langle\pm t\rangle$ (resp., $\langle\pm1\rangle$)
is the quadratic form $x\mapsto\pm tx^2$ (resp., $x\mapsto\pm x^2$).
The boundary map $\partial$ in \eqref{eq:GR(RX)} sends $u$ to the generator 
$(E,\langle t\rangle)$ of $GR(\R)\cong\Z$, 
and $\partial(u^-)=-\partial(u)$, while 
the map $H$ in \eqref{eq:GR(RX)} 
sends the generator of $KR_0(\pt)\cong\Z$ to $u+u^-$.
Since $GR_1(\R)\cong\Z/2$, the map
$F:GR_1(\R) \to KR_0(\pt)$ is zero. Since $KR_{-1}(\pt)=0$,
\eqref{eq:GR(RX)} reduces to:
\[
 0\to KR_0(\pt) \smap{H} \VR_1(\R) \smap{\partial} GR(\R) \to 0. 
\]
It follows that $\VR_1(\R)$ is isomorphic to $\Z^2$, 
with basis $\{u,u^-\}$. 
%
%

Similarly, $GR(\pt)\cong RO(G)\cong\Z^2$; a basis is given
by $e=(\C,\langle 1\rangle)$ and $e^-=(\C,\langle -1\rangle)$.
Since the cup product $GR(\pt) \map{\cup\,u} \VR_1(\R)$
sends $e$ to $u$ and $e^-$ to $u^-$, it is an isomorphism.
\end{example}

\begin{lemma}\label{cup-lemma}
Let $\varphi_X:KO(X)\to H(X)$ be a natural transformation of functors
from finite CW complexes to abelian groups 
such that each $H(X)$ is naturally a $KO(X)$-module
and each $\varphi_X$ is a $KO(X)$-module map.  Then $\varphi_X$ is the
cup product with $t=\varphi_\pt(1)$.

If in addition $\varphi_\pt:KO(\pt)\to H(\pt)$ is an isomorphism,
and $H(X)\cong KO(X)$ for all $X$, then
$\varphi:KO^*\to H^*$ is an isomorphism of cohomology theories.
\end{lemma}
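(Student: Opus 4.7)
For the first assertion, I will use that every element $x \in KO(X)$ can be written as $x = x \cdot 1_X$, where $1_X = \pi_X^*(1_\pt)$ is pulled back along the terminal map $\pi_X: X \to \pt$. The hypothesis that $\varphi_X$ is $KO(X)$-linear gives
\[
\varphi_X(x) = \varphi_X(x \cdot 1_X) = x \cdot \varphi_X(1_X),
\]
and naturality of $\varphi$ applied to $\pi_X$ gives $\varphi_X(1_X) = \pi_X^* \varphi_\pt(1) = \pi_X^*(t)$. Hence $\varphi_X(x) = x \cup t$, as claimed.

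For the second assertion, the plan is to promote $\varphi$ to a natural transformation of graded cohomology theories $\varphi^*: KO^* \to H^*$, defined in each degree $n$ by $\varphi_X^n(x) = x \cup t$, using the $KO^*$-module structure on $H^*$. Naturality is immediate, and compatibility with connecting maps follows from the fact that $t$ is pulled back from a point (hence is ``absolute''), so cup product with $t$ commutes with the boundary $\delta$. On a point, $\varphi_\pt^0$ is an isomorphism by hypothesis, and the abstract isomorphism $H(\pt) \cong KO(\pt)$ together with the module structure forces $H^*(\pt)$ to be the cyclic $KO^*(\pt)$-module generated by $t$, making $\varphi_\pt^n$ an isomorphism in every degree.

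Once $\varphi^*_\pt$ is an isomorphism in all degrees, a standard comparison argument for cohomology theories---cell-by-cell induction using the five-lemma together with the Mayer--Vietoris long exact sequences of both $KO^*$ and $H^*$---propagates the iso from the point to every finite CW complex $X$. The main obstacle I expect is verifying that the given degree-zero $KO(X)$-module structure on $H(X)$ extends functorially to a full $KO^*$-module structure on $H^*$ compatible with the connecting maps of both theories; once this compatibility is in place, the rest is routine bookkeeping.
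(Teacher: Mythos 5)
Your proof of the first assertion is exactly the paper's: write $x=x\cdot 1_X$, use $KO(X)$-linearity and naturality along $X\to\pt$ to get $\varphi_X(x)=x\cdot\pi_X^*(t)$. No issues there.

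For the second assertion, however, your route has a genuine gap --- one you partly flag yourself but do not close. The Mayer--Vietoris/five-lemma induction over cells cannot be run in degree $0$ alone: to apply the five lemma you need $\varphi$ to be defined in neighboring degrees and to commute with the connecting maps, and you need $\varphi^n_\pt$ to be an isomorphism for all $n$. The hypotheses of the lemma only give you a degree-zero $KO(X)$-module structure and an isomorphism $\varphi_\pt$ in degree zero, so neither the graded $KO^*$-module structure on $H^*$, nor the compatibility of ``cup product with $t$'' with the boundary maps, nor the assertion that $H^*(\pt)$ is the cyclic $KO^*(\pt)$-module on $t$, is available; these are precisely the things you defer as ``the main obstacle,'' and they are not routine consequences of the stated hypotheses.

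The paper avoids all of this with a different, more elementary mechanism that your proposal misses. Since $\varphi_\pt$ is an isomorphism and $KO(\pt)\cong\Z$, one has $t=\pm1$; for a finite connected complex $X$ the reduced group $\widetilde{KO}(X)$ is a nilpotent ideal of $KO(X)$, so $\pi_X^*(t)=\pm1+(\text{nilpotent})$ is a \emph{unit} of $KO(X)$. Under the identification $H(X)\cong KO(X)$, the map $\varphi_X$ is multiplication by this unit, hence an isomorphism for \emph{every} finite $X$ at once --- no cell induction and no boundary maps are needed in degree zero. The extension to all degrees is then obtained not by defining $\varphi^n$ as a cup product, but by applying the degree-zero statement to $X\times S^n$ (where $KO^{-n}(X)$ and $H^{-n}(X)$ sit as natural summands) and invoking Bott periodicity. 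I recommend you replace the comparison-theorem strategy with this unit argument, or else supply the full graded module structure and its compatibility with suspensions, which in the generality of the lemma's hypotheses is not actually available.
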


\begin{proof}
If $x\in KO(X)$ then $\varphi_X(x)=\varphi_X(x\cdot1)=\varphi_X(x)\cdot t.$
This proves the first assertion.  Now assume that 
$H(X)\cong KO(X)$ for all $X$, and that $\varphi_\pt$ is an
isomorphism; as $KO(\pt)\cong\Z$, $t=\pm1$. To see that
$\varphi_X$ is an isomorphism, we may assume that $X$ is connected.
In this case, $KO(X)\cong KO(\pt)\oplus \widetilde{KO}(X)$,
and the second factor is a nilpotent ideal in $KO(X)$.
Therefore $t=\varphi_X(1)$ is a unit of $KO(X)$, and hence
$\varphi_X:KO(X)\to KO(X)$ is an isomorphism.
Taking products with spheres and using Bott periodicity,
it follows that $\varphi_X:KO^*(X)\cong H^*(X)$ for all $X$,
i.e., $\varphi$ is an isomorphism of cohomology theories.
\end{proof}

\begin{subremark}\label{cup-KO_G}
If $G$ acts trivially on $X$, so that 
$KO_G(X)\cong KO(X)[y]/(y^2-1)$ and $\VR(\RX)$ is a $GR(X)$-module,
the lemma holds for $KO_G(X)\to \VR(\RX)$.
\end{subremark}

\begin{theorem}\label{cup-u}
The cup product with $u$ defines isomorphisms
\[ GR_n(X) \map{\cup\, u} \VR_{n+1}(\RX).\]
\end{theorem}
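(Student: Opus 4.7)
The plan is to view $\cup u$ as a natural transformation between two $G$-equivariant cohomology theories on finite $G$-CW complexes and verify it is an isomorphism by reduction to orbits. Both $X \mapsto GR_n(X)$ and $X \mapsto \VR_{n+1}(\RX)$ are $G$-equivariant cohomology theories, satisfying homotopy invariance, excision and long exact sequences, since they are defined as (relative) Hermitian $K$-groups fitting into Bott-type sequences as in \eqref{seq:GW[i]}. Associativity of the external cup product turns $X \mapsto \VR_{*+1}(\RX)$ into a module functor over $X \mapsto GR_*(X)$, and realises $\cup u$ as a $GR_*(X)$-module homomorphism, natural in $X$.

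Next I would verify the theorem on orbits. For the fixed orbit $X = \pt$ and $n = 0$, Example \ref{ex:pt-cup-u} already does the job: $\cup u$ carries the basis $\{e, e^-\}$ of $GR(\pt) \cong RO(G)$ to the basis $\{u, u^-\}$ of $\VR_1(\R)$. Passing to arbitrary $n$ at $\pt$ proceeds by comparing the Bott sequence \eqref{seq:GW[i]} for $\pt$ with the same Bott sequence for $(S^{1,1},\pt)$, the vertical comparison being cup with $u$. Since $\partial(u) = (E, \langle t \rangle) \in GR(\R)$ is the standard Real Bott class and the corresponding map on each $KR$ term is classical Atiyah Real Bott periodicity, the five lemma propagates the isomorphism from $n = 0$ to all $n$.

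For the free orbit $X = G$, a Real vector bundle on $G \times Y$ is the same as a complex vector bundle on $Y$ and the various Hermitian structures become vacuous, giving $GR^{[i]}_n(G \times Y) \cong KU_n(Y)$ for every $i$. Under these identifications $\cup u$ becomes the classical complex Bott periodicity $KU_n(\pt) \to KU_{n+1}(S^{1,1}, \pt)$, which is an isomorphism. With both orbit types handled, a standard Mayer--Vietoris induction on the equivariant cell structure of $X$, combined with the five lemma, extends the isomorphism from orbits to all finite $G$-CW complexes.

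The main obstacle is the compatibility check at the single orbit $\pt$: one must verify that $\cup u$ is compatible with all three connecting maps $F$, $H$ and $\partial$ of \eqref{seq:GW[i]}, and that the induced vertical map between the $KR$ columns matches cup product with a standard Bott class. Given the explicit presentation $u = (E, \langle 1\rangle, \langle t \rangle)$ of Example \ref{ex:pt-cup-u}, this identification is transparent; once it is in place, the rest of the argument reduces to diagram chasing and a routine equivariant cell induction.
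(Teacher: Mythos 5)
Your overall strategy---view $\cup\,u$ as a natural transformation of equivariant cohomology theories, verify it on the two orbit types, and propagate by Mayer--Vietoris and the five lemma---is essentially the paper's (its Cases 1--4 are: trivial action, $X=G\times Y$, free actions via an open cover, and the general case via a neighborhood of $X^G$). Your treatment of the fixed orbit is fine and matches the paper's Case 1, which rests on Example \ref{ex:pt-cup-u} together with the module argument of Lemma \ref{cup-lemma}.

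The gap is in your free-orbit case. It is not true that $GR^{[i]}_n(G\times Y)\cong KU_n(Y)$: a Real bundle on $G\times Y$ is indeed just a complex bundle $E$ on $Y$, but a symmetric form on it is genuinely extra data (a nondegenerate symmetric $\C$-bilinear form, i.e.\ a reduction of structure group to $O_n(\C)\simeq O_n(\R)$), so that $GR(G\times Y)\cong KO(Y)$ while $KR(G\times Y)\cong KU(Y)$; see \cite[2.4(b)]{KSW}. This is precisely what makes $WR(G\times Y)$ the (nonzero) cokernel of $KU(Y)\to KO(Y)$: if your identification were correct, $\cup\,u$ on the free orbit would be complex Bott periodicity and $WR(G)$ would vanish, whereas $WR(G)\cong\Z/2$. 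The correct free-orbit computation---which is where the real content of the theorem lies---compares the Bott sequence \eqref{seq:GW[i]} for $X=G\times Y$ with the classical sequence $KO_{n+2}(Y)\to KU_n(Y)\to KO_n(Y)\to KO_{n+1}(Y)\to KU_{n-1}(Y)$ to identify $\VR_{n+1}(\RX)$ with $KO_n(Y)$ as a $KO_*(Y)$-module, and then one must still check that $1\cup u$ is a \emph{generator} (a unit) of $\VR_1(\RX)\cong\Z$ for $Y=\pt$, rather than twice a generator. The paper does this by showing that $\partial(u)$ is the nonzero class of $KO_1(S^1,\pt)\cong\Z/2$ and that the discriminant $D(u)=[t]$ generates $KU(\pt)$, so that $u$ cannot lie in $2\cdot\VR_1(\RX)$. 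Your proposal bypasses exactly this verification, so as written the free case---and hence the cell induction built on it---does not go through.
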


\begin{corollary}\label{C.6} (See \cite[C.6]{KSW})
We have an exact sequence 
\[ \to GR_1(\RX) \to KR(X) \smap{H} GR(X)\smap{} GR(\RX)\to,
 \]
where the map $GR(X)\to GR(\RX)$ is the cup product with $\partial(u)$.
\end{corollary}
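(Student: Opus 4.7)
The plan is to apply Theorem \ref{cup-u} to the Bott exact sequence \eqref{eq:GR(RX)},
\[
GR_1(\RX) \smap{F} KR(X) \smap{H} \VR_1(\RX)\smap{\partial} GR(\RX)\to KR_{-1}(X).
\]
Theorem \ref{cup-u} provides an isomorphism $\cup u : GR(X)\map{\cong}\VR_1(\RX)$, and substituting it in yields a four-term exact sequence with the groups named in the corollary. What remains is to identify the two middle arrows after this substitution.

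For the arrow $KR(X)\to GR(X)$, I want to show that the composite $KR(X)\smap{H}\VR_1(\RX)\lmap{\cup u} GR(X)$ is the ordinary hyperbolic map. Both are natural $KR(X)$-module maps (with respect to the standard tensor-product module structures), so in the spirit of Lemma \ref{cup-lemma} it suffices to check that they agree on $1\in KR(\pt)$. On one hand the hyperbolic form on $\C$ is $[\C\oplus\C^*,\mathrm{ev}]=e+e^-\in GR(\pt)$; on the other hand Example \ref{ex:pt-cup-u} gives $H(1)=u+u^-\in\VR_1(\R)$, whose preimage under $\cup u$ is $e+e^-$ since $e\cup u=u$ and $e^-\cup u=u^-$.

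For the arrow $GR(X)\to GR(\RX)$, I want the composite $\partial\circ(\cup u)$ to equal $\cup\,\partial(u)$. This is the projection formula $\partial(x\cup u)=x\cup\partial(u)$ for $x\in GR(X)$, expressing the fact that the connecting map $\partial$ in \eqref{seq:GW[i]} is a module map over Grothendieck--Witt theory. It follows from the construction of \eqref{seq:GW[i]} as the long exact sequence of a cofiber sequence of $\GW$-module spectra in Schlichting's framework (see \cite[\S6]{Schlichting.Fund}). Granted it, the composite $GR(X)\map{\cup u}\VR_1(\RX)\smap{\partial}GR(\RX)$ is precisely $\cup\partial(u)$, which is what we wanted.

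The main obstacle is this projection formula for $\partial$. At a point it is transparent from Example \ref{ex:pt-cup-u}: $\partial(e\cup u)=\partial(u)=e\cup\partial(u)$ and $\partial(e^-\cup u)=\partial(u^-)=-\partial(u)=e^-\cup\partial(u)$. Extending this to arbitrary $X$ requires a cup-product-compatible module structure on the entire Bott sequence, which one must either construct by hand at the level of Hermitian categories, or import wholesale from the spectrum-level construction of Grothendieck--Witt theory.
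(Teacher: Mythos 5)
Your proposal is correct and follows essentially the same route as the paper, whose entire proof is the one sentence ``Use Theorem \ref{cup-u} to replace the middle term in \eqref{eq:GR(RX)} with $GR(X)$.'' Your verification that the two resulting arrows are the hyperbolic map and cup product with $\partial(u)$ (via the check at a point from Example \ref{ex:pt-cup-u} and the projection formula for $\partial$) simply makes explicit the module-compatibility that the paper leaves implicit.
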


\begin{proof}
Use Theorem \ref{cup-u} to replace the middle term
in \eqref{eq:GR(RX)} with $GR(X)$.
\end{proof}

\begin{proof}[Proof of Theorem \ref{cup-u}]
We first consider two extreme cases, and then the general case.

\smallskip
\noindent {\it Case 1)} 
When $X=X^G$, the sequence of $GR(X)$-modules 
\eqref{eq:GR(RX)} reduces to
\[ 0 \to KO(X) \smap{H} \VR_{1}(\RX) \smap{\partial} KO(X) \to 0. \]
We see from Example \ref{ex:pt-cup-u} and Lemma \ref{cup-lemma}
that the map $H$ is the cup product with $u+u^-$, and
that the map $\partial$ is split by the cup product
$KO(X)\to \VR_1(X)$  with $u$.
Hence $\VR(\RX)\cong GR(X)\cong KO(X)\oplus KO(X)$ as a $KO(X)$-module.

We claim that $\VR_1(\RX)\cong GR(X)$ as a $GR(X)$-module.
Now $GR(X)\cong KO(X)\otimes RO(G)$, and $RO(G)=\Z[s]/(s^2-1)$,
it suffices to check the action of the sign representation $s$.
By inspection, $s=1$ on $KO(X)$ and $s(u)=u^-$. Therefore
$H$ and $\partial$ are $RO(G)$-maps, establishing the claim.

By Remark  \ref{cup-KO_G}, 
establishes Theorem \ref{cup-u} when $X=X^G$.

\smallskip 
\noindent {\it Case 2)} 
When $X=G\times Y$, the map $GR_n(\RX)\smap{F}\!KR_n(\RX)$ 
in \eqref{eq:GR(RX)}
is the complexification map $KO_{n-1}(Y)\to KU_{n-1}(Y)$, and
\eqref{seq:GW[i]} becomes
%
\begin{equation*}\label{eq:VR}
KO_{n+2}(Y) \smap{c} KU_n(Y) \smap{H} \VR_{n+1}(\RX) \smap{} 
KO_{n+1}(Y) \smap{c} KU_{n-1}(Y).
\end{equation*}
%
This shows that $\VR_*(\RX)$ 
is a cohomology theory on $Y$. Comparing \eqref{seq:GW[i]} 
with the classical  Bott sequence 
(see \cite[III.5.18]{MKbook},  \cite[(3.4)]{Atiyah})
\[ 
KO_{n+2}(Y) \smap{c} KU_n(Y) \to KO_n(Y) \map{\partial} 
KO_{n+1}(Y) \smap{c} KU_{n-1}(Y), 
\]
we see that $\VR_{*+1}(\RX)\cong KO_*(Y)$ as $KO_*(Y)$-modules.
Therefore it suffices to show that the cup product 
$\varphi: GR_0(X)\to \VR_1(\RX)$ sends $1$ to 
a generator of the $KO(Y)$-module $\VR_1(X)\cong KO(Y)$.
By Lemma \ref{cup-lemma}, we may assume that $Y$ is a point.
In this case, the above exact 5-term sequence reduces to
\[ 
\Z/2 \map{0} \Z \map{H=2} \Z \map{\partial} \Z/2 \to 0.
\]
Since $KO(\pt)\cong\Z$ and $\VR_1(\R)\cong\Z$,
we need only show that the distinguished element 
$u=(\C,1,\langle t\rangle)$ is a generator of $\VR_1(\R)$.

For this, we first note that $\partial(u)$ is nonzero and odd because,
by construction, $\partial(u)$ is the class of 
$\langle t\rangle$ considered as a function $S^1\to KO$, and this is
the generator of $KO_1(S^1,\pt)\cong\Z/2$.
Next, we compose the map $KO(Y)\to\VR_1(\RX)$ with the discriminant 
\[
\VR_1(\RX)\smap{D} KR_1(\RX)\cong KU(Y).
\]
Since $D$ takes an element $(E,g_1,g_2)$ of $\VR_1(\RX)$
to $[g_1g_2^{-1}]$, we have $D(u)=[t]$,
which is a generator of $KR(\R)\cong KU(\pt)\cong\Z$.
Since $KU(\pt)\smap{H} KO(\pt)$ is $\Z\smap{2}\Z$,
we see that $|u|\le2$. Hence $u$ is a generator of $\VR_1(\pt)$,
establishing Theorem \ref{cup-u} when $X=G\times Y$.

\smallskip
\noindent {\it Case 3)} 
When $G$ acts freely on $X$, there is a finite cover of $X$ by
open subspaces of the form $\{G\times U_i\}$.  By case 2), the cup
product with $u$ is an isomorphism for each of these opens, and for
their interections.  It follows from the 5--lemma that the cup
product with $u$ is an isomorphism for $X$.

\smallskip
\noindent {\it Case 4)} 
For general $X$, let $T$ be an equivariant closed neighborhood of
the subcomplex $X^G$, so that $X^G$ is in the interior of $T$ and
$X^G\subset T$ is a $G$-homotopy equivalence.
Write $Y$ (resp., $A$) for the closure of $X-T$ (resp., $T\cap Y$).
Then the cup product with $u$ determines a map of Mayer--Vietoris sequences
\begin{align*}\xymatrix@C=0.69em{ 
GR_{n+1}(T\amalg Y) \ar[r]\ar[d]^{\cong} & GR_{n+1}(A) \ar[r]\ar[d]^{\cong} &
GR_n(X)\ar[r]\ar[d]^{\cup\,u} & 
GR_{n}(T\amalg Y) \ar[r]\ar[d]^{\cong} & GR_{n}(A) \ar[d]^{\cong} \\
\VR_{n+1}(T\amalg Y) \ar[r] & \VR_{n+1}(A) \ar[r] & \VR_n(A) \ar[r] &
\VR_{n}(T\amalg Y) \ar[r] & \VR_{n}(A).}
\end{align*}
The general case follows from the 5--lemma.
\end{proof}

Recall that the ``co-Witt'' group $WR{\,}'_0(X)$ is the
kernel of the map $GR_0(X)\!\smap{F}\!KR_0(X)$ in \eqref{seq:GW[i]};
it is also the image of $\VR_1(X)\!\to\!GR_0(X)$.

\begin{proof}[Proof of Theorem \ref{F.Thm}.]
The relative group $WR(X\times S^{1,1}\!,X)\!=WR(\RX)$ fits into
the usual 12--term sequence \cite[p.\,278]{MKAnnalsH},
an elementary part of which is:
\[
kr'_{0}(\RX) \smap{j} WR{\,}'_0(\RX) \smap{\beta} WR_0(\RX) 
\smap{d} kr_{0}(\RX).
\]
By Lemma \ref{KR(XxGm)}, $kr_{0}(\RX)\cong kr_{-1}(X)$ and
$kr'_{0}(\RX)\cong kr'_{-1}(X)$. If these groups are 0,
for example if $KR_{-1}(X)=0$, then
$WR{\,}'_0(\RX) \cong WR_0(\RX)$.
In general, the kernel of $\beta$ is a quotient of $kr_{-1}(X)$
and the cokernel is a subgroup of $kr'_{-1}(X)$.

On the other hand, by definition, 
Theorem \ref{cup-u}, and Corollary \ref{C.6}:
\begin{align*}
WR{\,}'_0(\RX)& = \mathrm{im}\ \VR_1(\RX)\smap{H} GR_0(\RX)\\
           & = \mathrm{im}\ GR(X)\map{\cup\, u} GR(\RX) \\
           & = GR(X)/\mathrm{im}\ KR(X) \cong WR(X).  \qedhere
\end{align*}
\end{proof}

\vfill
\newpage\appendix

\section{Equivariant cohomology}\label{sec:Bredon}

In this section, we recall basic facts about $G$-equivariant cohomology
theories, and apply them to $KO_G$ and $KR$. 
Recall from \cite[I.2]{Bredon} that 
an {\it equivariant cohomology theory} is a sequence of functors  $h^q$
on pairs of $G$-complexes satisfying
homotopy invariance, excision and the existence of long exact sequences,
depending naturally on a pair of $G$-complex.

A {\it Bredon coefficient system} $M$ for the cyclic group $G$ of order~2
is a diagram $M(\pt)\smap{a} M(G)$, together with an involution $\sigma$ 
on $M(G)$ such that $\sigma a=a$ \cite[I.4.1]{Bredon}.
Any coefficient system $M$ determines an equivariant cohomology theory 
$H^*_G(-;M)$ \cite[I.6.4]{Bredon}.
Conversely, any $G$-equivariant cohomology theory $h^*$ defines 
a family of
coefficient systems $M=h^q$, $h^q(\pt)\to h^q(G)$, 
and there are canonical maps $\eta^q:h^q(X)\to H^0_G(X;h^q)$
for all $q\in\Z$.

If $X$ is a finite dimensional $G$-CW complex, the (convergent)
Bredon spectral sequence \cite[IV.4]{Bredon} for 
an equivariant cohomology theory $h^*$ is:
\begin{equation}\label{Bredon-ss}
E_2^{p,q}=H_G^p(X;h^q)\Rightarrow h^{p+q}(X),
\end{equation}
and the canonical maps $\eta^q$ are the edge maps.

Any coefficient system $M$ defines a sheaf
$\mathcal{M}$ on $X/G$ whose stalk at $\bar{x}$ is $M(\pt)$ or $M(G)$,
depending on whether the inverse image of $\bar{x}$ in $X$ is a 
fixed point or isomorphic to $G$. The sheaf cohomology 
$H^p(X/G;\mathcal{M})$ agrees with $H^p_G(X;M)$; this alternative
definition is due to Segal \cite{Segal}.
We remark that \eqref{Bredon-ss} agrees with Segal's spectral 
sequence \cite[5.3]{Segal}.

\begin{example}\label{local-coeffs}
For any $G$-module $M$, we have the coefficient system
$M(\pt)=M^G \to M(G)=M$, as well as a local system $\mathcal{M}$ 
on $X/G$, and $H^*_G(X;M)$ is $H^*\Hom_G(C_*(X),M)$, the cohomology 
associated to the local system $\mathcal{M}$.
If $A$ is an abelian group, regarded as a trivial $G$-module, we get
the {\it constant coefficient system} $A(0)$;
since $\Hom_G(C_*(X),A)=\Hom(C_*(X/G),A)$,
$H^p_G(X;A(0))$ is the usual cohomology group $H^*(X/G,A)$.
Note that $H^p_G(X;A(0))$ is different from the 
Borel cohomology $\H^p_G(X,A)$; see Example \ref{Borel}.
\end{example}

\begin{example}\label{Borel}
Let $A$ be an abelian group. The Borel cohomology groups
$\H^p_G(X,A)$ are defined to be $H^p(\XG,A)$, where $\XG=X\times_G{EG}$.
The $\H_G^*(-,A)$ form an  equivariant cohomology theory;
the associated coefficient system has $H^q_G(\pt)=H^q(BG,A)$ and 
$H^q_G(G)=0$ for $q\ne0$. 
Since $C_*(X\times EG)$ is a chain complex of free $\Z[G]$-modules,
quasi-isomorphic to $C_*(X)$, we see that
$\H^p_G(X,A)$ is the group hypercohomology of $C_*X$ with coefficients in $A$.
\[ \H^p_G(X,A)=H^p\Hom(C_*(X_G),A)=H^p\Hom_G(C_*(X\times EG),A).\] 

More generally, if $M$ is any $G$-module we will write
$\H_G^*(X,M)$ for the group hypercohomology
\[
\H_G^*(C_*X,M)=H^p\Hom_G(C_*(X\times EG),M);
\] 
it is also an equivariant cohomology theory.
For example, the reader may use the formula that
$\Hom_\Z(A,\Z)\cong\Hom_G(A,\Z[G])$
for any $G$-module $A$ to check that
$$\H_G^*(X,\Z[G])\cong H^*(X,\Z).$$

Now suppose that $A=\Z/2$.
The terms of the spectral sequence 
\eqref{Bredon-ss} are: $E_2^{p,q}=H^p(X^G,\Z/2)$  if $q>0$, and
$E_2^{p,0}=H^p(X/G,\Z/2)$. For example, if $X=X^G$ then 
$\H^n_G(X,\Z/2) \cong \bigoplus_{p=0}^n H^p(X,\Z/2)$.

The exact sequence of low degree terms is
\[ 
0 \to  H^1(X/G,\Z/2) \to \H^1_G(X,\Z/2) \smap{\eta^1} H^0(X^G\!,\Z/2)
\smap{d_2} H^2(X/G,\Z/2).
\]
The edge map $\H^1_G(X,\Z/2)\map{\eta^1} H^0(X^G\!,\Z/2)$ in \eqref{Bredon-ss} 
need not be onto, as we see from Example \ref{delPezzo}.

The groups $\H_G^*(X,\Z/2)$ are graded modules over
$\H_G^*(\pt,\Z/2)=\Z/2[\beta]$, natural in $X$. It follows that
$\beta\in\H_G^1(\pt,\Z/2)$ acts on the Bredon spectral sequence.
For $q>0$, it sends $E_2^{p,q}\cong H^p(X^G,\Z/2)$ isomorphically
to $E_2^{p,q+1}$.  When $X=V_\top$ for a variety $V$, 
so that $H^*_\et(V,\Z/2)\cong \H_G^*(X,\Z/2)$, it follows
from \cite{CTParimala} that when $q>\dim V$ we have
$E_2^{0,q}\cong H^0(V,\cH^q)\cong(\Z/2)^\nu$.
\end{example}

\begin{examples}\label{ex:coeff}
Let $G$ be the group of order 2.\\
a) The equivariant cohomology theory $KO_G^*$ determines
coefficient systems $KO_G^q$:
\[ KO_G^q(\pt)=KO^q(\pt)\otimes RO(G) \map{+} KO_G^q(G)=KO^q(\pt). \]
Here $RO(G)\cong\Z^2$ is the real representation ring of $G$, with generators
$[\R]$ and $[\R(1)]$, and '$+$' sends $a[\R]+b[\R(1)]$ to $a+b$.

\smallskip\noindent b)
Real $K$-theory $KR^q$ determines the coefficient system $KO^q\to KU^q$:
$KR^0$ is the constant system $\Z(0)$, $KR^2$ is $0\to\Z(1)$,
$KR^4$ is $\Z\smap2\Z$,
$KR^6=KR^{-2}$ is $\Z/2\smap{0}\Z(1)$ and
$KR^7=KR^{-1}$ is $\Z/2\to0$. We also have $KR^q=0$ for $q=1,3,5$
and $KR^q=KR^{q+8}$.

\smallskip\noindent c)
The cohomology theory $h^q=\H_G^q(-,\Z(1))$ of Example \ref{Borel}
has the coefficient systems $h^0=\Z(1)$, 
$h^{q}=(\Z/2\!\to\!0)$ if $q>0$ is odd, and $h^{q}=0$ otherwise.
In particular, there is an exact sequence
\begin{align*} 
0 \to H_G^1(X;\Z(1))\smap{}\H_G^1(X,\Z(1)) &\to (\Z/2)^\nu\ \map{d_2} 
\\
H_G^2(X;\Z(1))\smap{}\H_G^2(X,\Z(1)) &
\smap{} H^1(X^G;\Z/2)\, \map{d_2}\,H_G^3(X;\Z(1)).
\end{align*} 
If $G$ acts freely on $X$, then $H_G^n(X;\Z(1))\cong\H_G^n(X,\Z(1))$
for all $n$.
\end{examples}

\begin{subremark}
If $M$ is a $G$-module, $\H_G^*(X,M)$ denotes the Borel cohomology
with coefficients in $M$ (see \ref{Borel}), while
$H_G^*(X;M)$ denotes the Bredon cohomology of the coefficient
system $(M^G\!\to\!M)$ (see \ref{local-coeffs}).
In particular, $H_G^*(X;M) \to\H_G^*(X,M)$ need not be 
an isomorphism unless $X^G=\emptyset$.
\end{subremark}

\goodbreak
\begin{lemma}\label{H^q_G}
For each $p$ and $q$, there is a natural split exact sequence
\[
0 \to H^p(X^G,KO^q) \to H^p_G(X;KO_G^q) \map{+} H^p(X/G,KO^q) \to 0.
\]
If $X/G$ is connected and $X^G$ has $\nu$ components, then
$$H^0_G(X;KO_G)\cong\Z\oplus \Z^{\nu}.$$
Moreover, $H^1_G(X;KR^{-1})\cong H^1(X^G,\Z/2)$ and
\[
H^1_G(X;KO_G^{-1})\cong H^1_G(X;KR^{-1})\oplus H^1(X/G,\Z/2).
\]
\end{lemma}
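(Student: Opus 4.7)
The plan is to deduce the split short exact sequence from a splitting at the level of coefficient systems, and then to use Segal's sheaf-theoretic description of Bredon cohomology (mentioned after \eqref{Bredon-ss}) to identify the outer terms. From Example \ref{ex:coeff}(a), the coefficient system $KO_G^q$ is
\[ KO^q\otimes RO(G) \map{+} KO^q, \]
and the kernel of $+$ is the copy of $KO^q$ generated by $[\R]-[\R(1)]$. This yields a split short exact sequence of coefficient systems
\[ 0 \to K' \to KO_G^q \to K'' \to 0, \]
where $K'(\pt)=KO^q$, $K'(G)=0$, and $K''$ is the constant coefficient system with value $KO^q$; a splitting is given by $c\mapsto c[\R]$.

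Under Segal's identification, the sheaf on $X/G$ associated to $K''$ is the constant sheaf with value $KO^q$, so $H^p_G(X;K'')\cong H^p(X/G,KO^q)$; the sheaf associated to $K'$ is supported on $X^G\subset X/G$ (stalk $0$ at free orbits), hence is $i_*\underline{KO^q}$ for the closed inclusion $i\colon X^G\hookrightarrow X/G$, so $H^p_G(X;K')\cong H^p(X^G,KO^q)$. Since the coefficient-system sequence is split, the associated long exact sequence of Bredon cohomology breaks into split short exact sequences, proving the first claim.

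The remaining assertions are direct consequences. Setting $p=q=0$ and using the hypotheses yields $H^0_G(X;KO_G)\cong H^0(X^G,\Z)\oplus H^0(X/G,\Z)\cong\Z^\nu\oplus\Z$. For $KR^{-1}$, Example \ref{ex:coeff}(b) describes the coefficient system as $\Z/2\to 0$, which is itself supported on $X^G$; the same sheaf-theoretic argument gives $H^1_G(X;KR^{-1})\cong H^1(X^G,\Z/2)$. Applying the first part with $q=-1$ and $KO^{-1}=\Z/2$ then produces
\[ H^1_G(X;KO_G^{-1})\cong H^1(X^G,\Z/2)\oplus H^1(X/G,\Z/2), \]
and combining this with the previous identification gives the last displayed formula. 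The only point needing care is the stalkwise verification that a coefficient system $M$ with $M(G)=0$ gives the extension by zero of $\underline{M(\pt)}$ along $X^G\hookrightarrow X/G$; this is routine from the definition of the associated sheaf, so no serious obstacle arises.
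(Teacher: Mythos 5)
Your proposal is correct and follows essentially the same route as the paper: both split the coefficient system $KO_G^q$ into the summand supported on fixed points (the kernel $E^q$ of the addition map, your $K'$) and the constant system split off by $a\mapsto a[\R]$, identify the Bredon cohomology of each summand with $H^p(X^G,KO^q)$ and $H^p(X/G,KO^q)$ respectively, and deduce the last assertions from $KR^{-1}\cong E^{-1}$. The only cosmetic difference is that you pass through Segal's sheaf description on $X/G$, whereas the paper reads off $H^p_G(X;E^q)\cong H^p(X^G,KO^q)$ directly from the Bredon cochains.
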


\begin{proof}
There is a natural surjection from the coefficient system $KO_G^q$
to the constant coefficient system $KO^q(0)=(KO^q\to KO^q)$; the map from
$KO_G^q(\pt)\cong KO^q\otimes R(G)$ to $KO^q(\pt)$ is addition.
It is split by the map $KO^q\to KO_G^q$ sending $a$ to 
$a\otimes[\R]$.

Let $E^q$ denote the kernel of this surjection; 
$E^q(\pt)=KO^q$ and $E^q(G)=0$, so $H^p(X;E^q)\cong H^p(X^G,KO^q)$. 
Then $KO_G^q = E^q \oplus KO^q$ as coefficient systems.
Applying $H^p_G(X;-)$ yields the exact sequence of the lemma.
The last assertions follow from the isomorphisms
$KR^0\cong\Z(0)$ and $KR^{-1}\cong E^{-1}$ of coefficient systems.
\end{proof}

\begin{subremark}\label{KR(-1)}
The image of
$H^p_G(X;KR^{-2})\to\!H^p_G(X;KO_G^{-2})$ contains the summand
$H^p(X^G,\Z/2)$ of Lemma \ref{H^q_G}, 
since $E^{-2}=(\Z/2\!\to\!0)$ is a summand of both coefficient systems
$KR^{-2}$ and $KO_G^{-2}$.
\end{subremark}

\begin{lemma}\label{HG(0-A)}
Let $\cA$ be the coefficient system $(0\!\to\!A)$, where $A$ is a
$G$-module.  Then for every $G$-CW complex $X$,
\[H^*_G(X;\cA) \cong H^*_G(X,X^G;A) \cong H^*_G(X/X^G,\pt;A). \]
\end{lemma}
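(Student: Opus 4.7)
The plan is to identify all three groups as the cohomology of the same cellular Bredon cochain complex. For $G=\Z/2$, every cell of a $G$-CW complex has one of two orbit types: fixed cells (orbit type $G/G$, contained in $X^G$) or free cells (orbit type $G/1$, contained in $X-X^G$). For a coefficient system $M=(M(\pt)\to M(G))$, the cellular cochain complex splits as
\[
C^n_G(X;M)\;\cong\;\prod_{\text{fixed $n$-cells}} M(\pt)\;\times\;\prod_{\text{orbits of free $n$-cells}} M(G).
\]
For $\cA=(0\to A)$ the fixed-cell factor is zero, so $C^n_G(X;\cA)\cong\prod_{\text{orbits of free $n$-cells}} A$. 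For the coefficient system $A=(A^G\to A)$ of Example \ref{local-coeffs}, the relative cochain complex $C^n_G(X,X^G;A)$ is by definition the subcomplex of cochains vanishing on every cell of $X^G$, i.e.\ on every fixed cell, and so it is likewise $\prod_{\text{orbits of free $n$-cells}} A$. The coboundary maps coincide: for a fixed $(n{+}1)$-cell, $\partial$ lies in $X^G$ and is killed on both sides, while for a free $(n{+}1)$-cell only its free-cell boundary components survive in either complex. This yields $H^*_G(X;\cA)\cong H^*_G(X,X^G;A)$.

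For the second isomorphism, I would use that $X/X^G$ is a pointed $G$-CW complex whose cells are a single basepoint (a fixed $0$-cell) and the free cells of $X$, with attaching maps modified so that portions previously landing in $X^G$ now land on the basepoint. The pointed cochain complex $C^n_G(X/X^G,\pt;A)$ drops the basepoint factor and consists of $\prod_{\text{orbits of free $n$-cells of $X$}} A$ with the same coboundary maps: basepoint contributions are killed, just as $X^G$ contributions were in $C^n_G(X,X^G;A)$. This identifies the two cochain complexes and gives $H^*_G(X,X^G;A)\cong H^*_G(X/X^G,\pt;A)$.

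The argument is essentially formal once the cellular model of Bredon cohomology over the $\Z/2$-orbit category is in place; the only hard part is the bookkeeping of coboundaries, which reduces to the observation that fixed-cell (resp.\ basepoint) contributions are annihilated on both sides of each identification. As a sanity check, one can also deduce the first isomorphism from the short exact sequence of coefficient systems $0\to\cA\to A\to(A^G\to 0)\to 0$: the sheaf on $X/G$ associated to $(A^G\to 0)$ is supported on the image of $X^G$ with constant value $A^G$, so that $H^*_G(X;(A^G\to 0))\cong H^*(X^G,A^G)$, and the connecting map in the resulting long exact sequence is restriction to the fixed locus, giving $H^*_G(X;\cA)$ as its kernel.
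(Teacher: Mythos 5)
Your proof is correct and follows essentially the same route as the paper's: both identify the Bredon cochain complex for the coefficient system $(0\!\to\!A)$ with the equivariant cellular cochain complex $\Hom_G(C_*(X,X^G),A)$ supported on the free cells, and then read off the two relative cohomology groups. Your version just spells out the cellular bookkeeping (and adds a consistency check via the coefficient-system exact sequence) that the paper delegates to a citation of Bredon.
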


\begin{proof}
Recall that 
$C^q(X;\cA)\!=\!\Hom_G(C_q(X,X^G),A)$ is the group of 
functions $f$ on the $q$-cells $\sigma$ of $X-X^G$ satisfying 
$\overline{f(\sigma)}=f(\bar{\sigma})$;
see \cite[I-14]{Bredon}.
The usual differentials on $\Hom(C_*(X,X^G),A)$ make
$\Hom_G(C_*(X,X^G),A)$ into a cochain complex, and 
$H^*_G(X;\cA)$ is the cohomology of this complex, 
i.e., the cohomology $H^*_G(X,X^G;A) \cong H^*_G(X/X^G,\pt;A)$ 
of the local system $A$. (See Example \ref{local-coeffs}.)
\end{proof}

\begin{subex} When $G$ acts trivially on $X$, $H^*_G(X;\cA)=0$.
When $G$ acts freely on $X$, $H^*_G(X;\cA)=H^*(X/G,A)$.
\end{subex}

Recall that the coefficient system $KR^2$ is $0\!\to\!\Z(1)$.

\begin{corollary}\label{KR(2)}
$H^q_G(X;KR^{+2})\cong H^q_G(X;\Z(1))$, and
\[  
H^q_G(X;KR^{-2}) \cong H^q(X^G\!,\Z/2)\oplus H^q_G(X;\Z(1)).
\]
\end{corollary}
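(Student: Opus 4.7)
The strategy is to unwind definitions: each of $KR^{+2}$ and $KR^{-2}$, as described explicitly in Example \ref{ex:coeff}(b), is either identical to or a direct sum of coefficient systems that have already been handled in Lemma \ref{H^q_G} and Lemma \ref{HG(0-A)}. So this is essentially a bookkeeping argument rather than a genuine computation.

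For the first isomorphism, Example \ref{ex:coeff}(b) describes $KR^{+2}$ as the Bredon coefficient system $(0 \to \Z(1))$. On the other hand, the convention recorded in the subremark following Example \ref{ex:coeff} is that $H^q_G(X;\Z(1))$ denotes the Bredon cohomology of the coefficient system $(\Z(1)^G \to \Z(1))$ attached to the $G$-module $\Z(1)$ as in Example \ref{local-coeffs}. Since $\Z(1)^G=0$, these two coefficient systems coincide on the nose, and so $H^q_G(X;KR^{+2})\cong H^q_G(X;\Z(1))$ is immediate.

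For the second isomorphism, Example \ref{ex:coeff}(b) gives $KR^{-2}=(\Z/2 \to \Z(1))$ with \emph{zero} structure map. Because the map is zero and the involution on $\Z(1)$ is the standard one, this coefficient system splits as a biproduct
\[
KR^{-2}\;\cong\;(\Z/2\to 0)\;\oplus\;(0\to \Z(1))\;=\;E^{-2}\oplus\Z(1)
\]
in the category of Bredon coefficient systems (the compatibility condition $\sigma a=a$ is trivially verified on each summand). Applying $H^q_G(X;-)$, which is additive on direct sums of coefficient systems, then invoking Lemma \ref{H^q_G} to identify $H^q_G(X;E^{-2})\cong H^q(X^G,\Z/2)$ together with the first part of the corollary, yields
\[
H^q_G(X;KR^{-2})\;\cong\;H^q(X^G,\Z/2)\oplus H^q_G(X;\Z(1)).
\]

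There is no real obstacle; the only point requiring any attention is the splitting of a two-term coefficient system with vanishing structure map as a biproduct, which is a direct check from the definition of a Bredon coefficient system recalled in the opening of the appendix.
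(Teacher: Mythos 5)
Your proof is correct and follows essentially the same route as the paper: the key step in both is the decomposition of the coefficient system $KR^{-2}=(\Z/2\smap{0}\Z(1))$ as $(\Z/2\to0)\oplus(0\to\Z(1))$, followed by additivity of Bredon cohomology and the identification of $H^q_G(X;E^{-2})$ with $H^q(X^G,\Z/2)$ from (the proof of) Lemma \ref{H^q_G}. The only cosmetic difference is in the first isomorphism, where you observe directly that $(0\to\Z(1))$ coincides with the coefficient system $(\Z(1)^G\to\Z(1))$ because $\Z(1)^G=0$, whereas the paper routes the same fact through Lemma \ref{HG(0-A)} and the vanishing of $H^*_G(X^G;\Z(1))$; both reduce to the same observation.
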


\begin{proof}
By construction, $C^*(X^G,\Z(1)) = 0$, so $H^*_G(X^G;\Z(1))=0$.
It follows from this and Lemma \ref{HG(0-A)} that
$H^q_G(X;KR^{+2})\cong\!H^q_G(X;\Z(1))$.

The calculation of $H^q_G(X;KR^{-2})$ follows from this and the
observation in Remark \ref{KR(-1)} that the coefficient system $KR^{-2}$
is the direct sum of $(\Z/2\to0)$ and $(0\!\to\!\Z(1))$.
\end{proof}

\begin{lemma}\label{H4-iso}
If  $\dim X^G\le3$, then
\[
h^4: H^4_G(X;KR^{-4})\ \map{}\ H^4_G(X;KO_G^{-4}) \cong H^4(X/G,\Z).
\]
is a surjection. If $\dim X^G\le2$, then $h^4$ is an isomorphism.
\end{lemma}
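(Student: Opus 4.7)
The plan is to build a short exact sequence of Bredon coefficient systems $0 \to KR^{-4} \to KO_G^{-4} \to Q \to 0$ and read the result off its associated long exact sequence.

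First I would identify the forgetful map $KR^{-4} \to KO_G^{-4}$ at the level of coefficient systems. By Example \ref{ex:coeff}(b), $KR^{-4}$ is the system $(\Z \smap{2} \Z)$, while $KO_G^{-4}$ is $(KO^{-4}\otimes RO(G) \smap{+} KO^{-4}) = (\Z^2 \smap{+} \Z)$. Sending a Real bundle to its underlying $G$-equivariant real bundle acts on the fixed-point value by $1 \mapsto [\R]+[\R(1)] = (1,1)$, and on the free-orbit value by the realification $r: KU_4 \to KO_4$, which is the identity in degree $4$ (since $c: KO_4 \to KU_4$ is $\times 2$ and $r\circ c = 2$ on $KO_4$). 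One checks these commute with the restriction maps: $(1,1) \mapsto 1+1 = 2$ matches $\times 2 \mapsto 2 \cdot 1 = 2$. Both components are injective, with cokernels $\Z^2/\langle(1,1)\rangle \cong \Z$ on fixed points and $0$ on the free orbit, so $Q = (\Z \to 0)$. This system is supported on $X^G$, and only fixed cells contribute to its Bredon cochain complex since $Q(G) = 0$, giving $H^p_G(X; Q) \cong H^p(X^G, \Z)$.

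The long exact sequence associated to the short exact sequence then reads
\[
H^3(X^G, \Z) \to H^4_G(X; KR^{-4}) \smap{h^4} H^4_G(X; KO_G^{-4}) \to H^4(X^G, \Z).
\]
By Lemma \ref{H^q_G} the target decomposes as $H^4(X^G, \Z) \oplus H^4(X/G, \Z)$, which collapses to $H^4(X/G, \Z)$ once $\dim X^G \le 3$; the rightmost term $H^4(X^G, \Z)$ also vanishes in that range, forcing $h^4$ to be surjective. When $\dim X^G \le 2$, the leftmost term $H^3(X^G, \Z)$ vanishes as well, and $h^4$ is an isomorphism.

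The main obstacle is the bookkeeping in identifying the forgetful map of coefficient systems in the correct degree---especially the slightly counterintuitive fact that the free-orbit component is the identity in degree $4$, rather than $\times 2$ as in degree $0$, a feature specific to this degree where complexification itself is already multiplication by $2$. Once this identification is made, the rest of the argument is a formal diagram chase.
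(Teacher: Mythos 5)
Your proof is correct and follows essentially the same route as the paper: a short exact sequence of Bredon coefficient systems whose third term is supported on $X^G$, followed by the long exact sequence and the vanishing of $H^{*>\dim X^G}(X^G)$, with the splitting of Lemma \ref{H^q_G} identifying the target with $H^4(X/G,\Z)$. The only cosmetic difference is that the paper factors the comparison through the augmentation $KO_G^{-4}\to\Z(0)$ and the sequence $0\to KR^{-4}\to\Z(0)\to(\Z/2\to0)\to0$, which sidesteps having to pin down the fixed-point component of the forgetful map in degree $-4$ (your $1\mapsto(1,1)$ claim, which is true, and whose precise form your argument does not in fact depend on, since only injectivity at $\pt$ and surjectivity on the free orbit are needed).
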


\begin{proof}
The coefficient map $KO_G^{-4}\smap{+}\Z(0)$ of Lemma \ref{H^q_G}
induces an isomorphism
$H^4_G(X;KO_G^{-4}) \cong H^4_G(X;\Z(0))\cong H^4(X/G,\Z)$,
because $\dim X^G<4$.
From the exact sequence of coefficient systems 
\[
0\to KR^{-4}\to\Z(0)\to (\Z/2\!\to\!0)\to 0,
\]
we get an exact sequence
\[
H^3(X^G,\Z/2) \to H^4_G(X;KR^{-4}) \to H^4_G(X;\Z(0)) \to H^4(X^G,\Z/2).
\]
The last term vanshes when $\dim X^G\le3$, and the left term
vanishes when $\dim X^G\le2$, so the middle map
$H^4_G(X;KR^{-4}) \to H^4_G(X;\Z(0))$ is onto (resp., an isomorphism)
when $\dim X^G$ is at most 3 (resp., 2).
As $KR^{-4}\!\to\Z(0)$ is the composite $KR^{-4}\!\to KO_G^{-4}\to\Z(0)$,
the result follows.
\end{proof}


\begin{example}\label{S22}
Let $X$ be $S^{2,2}$\!, the 3-sphere with $X^G=S^1$.
It is not hard to show that $KO_G(S^{2,2})=\Z^2$.
Since $KR^1(B^{2,2},S^{2,2})\cong KO^1=0$ by \cite[2.3]{Atiyah}, 
we also have $KR(S^{2,2})\cong\Z$ and hence $WR(S^{2,2})\cong\Z$.

In this case, the differential $d_2: H^1(X^G,\Z/2) \to H_G^3(X,\Z/2)$ 
is an isomorphism in the Bredon spectral sequence \eqref{Bredon-ss} 
for $KO_G(X)$.  Indeed, since $X/G$ is the suspension of $\C\bP^1$, we have 
$H^p(X/G,\Z/2)=0$ for $p\ne3$ and $H^3(X/G,\Z/2)=\Z/2$.
\end{example}
%

\medskip

\bigskip

\end{document}